\newtheorem{theorem}{Theorem}[section]
\newtheorem*{theorem*}{Theorem}
\newtheorem{corollary}[theorem]{Corollary}
\newtheorem{lemma}[theorem]{Lemma}
\newtheorem{prop}[theorem]{Proposition}
\newtheorem*{prop*}{Proposition}
\newtheorem*{T1}{Theorem~\ref{main-result-sft-relative}}
\newtheorem*{T2}{Theorem~\ref{main-result-sofic-relative}}
\newtheorem*{T3}{Theorem~\ref{sft-close-to-sofic}}
\theoremstyle{definition}
\newtheorem{definition}[theorem]{Definition}
\theoremstyle{remark}
\newtheorem{remark}[theorem]{Remark}
\newcommand{\A}{\mathcal A}
\newcommand{\F}{\mathcal F}
\renewcommand{\P}{\mathcal P}
\renewcommand{\S}{\mathcal S}
\newcommand{\T}{\mathcal T}
\newcommand{\N}{\mathbb N}
\newcommand{\Z}{\mathbb Z}
\DeclareMathOperator{\opint}{int}
\renewcommand{\int}{\opint}
\DeclareMathOperator{\ints}{ints}
\title{
    Subsystem entropies of shifts of finite type and sofic shifts on countable amenable groups
}
\author{Robert Bland}
\address{Robert Bland\\
Department of Mathematics and Statistics\\
University of North Carolina at Charlotte \\
9201 University City Blvd.\\
Charlotte, NC 28223}
\email{rbland5@uncc.edu}
\author{Kevin McGoff}
\address{Kevin McGoff\\
Department of Mathematics and Statistics\\
University of North Carolina at Charlotte \\
9201 University City Blvd.\\
Charlotte, NC 28223}
\email{kmcgoff1@uncc.edu}
\urladdr{https://clas-math.uncc.edu/kevin-mcgoff/}
\author{Ronnie Pavlov}
\address{Ronnie Pavlov\\
Department of Mathematics\\
University of Denver\\
2390 S. York St.\\
Denver, CO 80210}
\email{rpavlov@du.edu}
\urladdr{https://cs.du.edu/~rpavlov/}
\subjclass{Primary 37B10, Secondary 37B40, 37B51}
\keywords{Amenable groups, Shifts of finite type, Entropy}
\begin{document}

\maketitle

\begin{abstract}
In this work we study the entropies of subsystems of shifts of finite type (SFTs) and sofic shifts on countable amenable groups. We prove that for any countable amenable group $G$, if $X$ is a $G$-SFT with positive topological entropy $h(X) > 0$, then the entropies of the SFT subsystems of $X$ are dense in the interval $[0, h(X)]$. In fact, we prove a ``relative" version of the same result: if $X$ is a $G$-SFT and $Y \subset X$ is a subshift such that $h(Y) < h(X)$, then the entropies of the SFTs $Z$ for which $Y \subset Z \subset X$ are dense in  $[h(Y), h(X)]$. We also establish analogous results for sofic $G$-shifts.
\end{abstract}

\section{Introduction}

Let $G$ be a countable group and let $\A$ be a finite alphabet of symbols. In symbolic dynamics, the central objects of study are the subsystems of the so-called \textit{full shift}, the dynamical system $(\A^G, \sigma)$, where $\sigma$ denotes the action of $G$ on $\A^G$ by translations (Definition \ref{def:subshifts}). \textit{Shifts of finite type} (Definition \ref{def:SFT}) and \textit{sofic shifts} (Definition \ref{def:sofic}) are the most widely studied and well understood examples of symbolic dynamical systems. In each of these cases, the system of interest is completely specified by a finite amount of information. This allows for combinatorial, finitary arguments to be applied to the analysis of the dynamics of such systems.  

\textit{Entropy} is one of the most fundamental invariants of a topological dynamical system. Many fundamental results from classical entropy theory (i.e., in the case where $G = \mathbb Z$) only generalize if $G$ is an \textit{amenable} group (Definition \ref{def:folner}). Amenability allows one to ``approximate" the group by a sequence of finite subsets in a way that is useful for studying dynamics. See Definition \ref{def:entropy} for the definition of the entropy of a symbolic dynamical system on an amenable group.

In general, one would like to understand the structure of the collection of subsystems of a given subshift. In this paper we study the entropies of the SFT subsystems of a given SFT, as well as the entropies of the sofic subsystems of sofic shifts. There are many existing results in the literature in the case where $G = \mathbb Z$. For example, the Krieger Embedding Theorem~\cite{krieger} characterizes the irreducible SFT subsystems of a given irreducible $\mathbb Z$-SFT. Additionally, Lind \cite{lind} has provided an algebraic characterization of the real numbers that are realized as the entropy of a $\mathbb Z$-SFT.

However, the situation is very different in cases where $G \neq \mathbb Z$. Even in the case where $G = \mathbb Z^d$ for $d > 1$, the classes of SFTs and sofic shifts behave quite differently. For example, Boyle, Pavlov, and Schraudner \cite{boyle_pavlov_schraudner} have shown by example that the subsystems of $\Z^d$ sofic shifts can be badly behaved for $d > 1$ (in contrast with the case where $d = 1$). Moreover, Hochman and Meyerovitch \cite{hochman_meyerovitch} have characterized the real numbers that are realized as entropy of a $\mathbb Z^d$-SFT (with $d >1$), but in contrast to the result of Lind mentioned above, the characterization is in algorithmic terms and unavoidably involves concepts from computability and recursion theory. Nonetheless, Desai \cite{desai} has shown that a $\mathbb Z^d$-SFT with positive entropy has a wealth of SFT subsystems (sharpening an earlier result of Quas and Trow \cite{quas_trow}).

\begin{theorem}[\cite{desai}]
\label{desai}
    Let $G = \mathbb Z^d$ for some $d \in \mathbb N$ and let $X$ be a $G$-SFT such that $h(X) > 0$. Then \[
        \bigl\{h(Y) : Y \subset X \text{ and $Y$ is an SFT} \bigr\}
    \] is dense in $[0,h(X)]$.
\end{theorem}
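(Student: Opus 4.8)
The plan is to realize the target entropies by embedding ``free'' choices into $X$ on a coarse grid, thereby reducing the whole statement to a single counting-and-gluing lemma. Fix $\epsilon > 0$. It suffices to produce, for arbitrarily large boxes $B = \{0,\dots,n-1\}^d$, a family $\mathcal{G}$ of $X$-admissible patterns on $B$ that are pairwise \emph{freely interchangeable}: each pattern in $\mathcal{G}$ agrees with the others on a boundary collar of thickness equal to the interaction range $r$ of $X$, and this common collar is such that the patterns may be laid down independently on a sublattice $n'\Z^d$ (with $n' = n - r$, so that adjacent boxes overlap exactly on the collar and every constraint window lies inside a single box) to produce configurations of $X$. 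Given such a family with $|\mathcal{G}| = M$, any sub-family $\mathcal{C} \subseteq \mathcal{G}$ of size $k$ determines a subsystem $Z_k \subseteq X$ consisting of all configurations obtained from such free tilings over all grid offsets, and $Z_k$ is an SFT because membership is governed by finitely many local rules.

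Second, I would compute and interpolate. Assigning to each box of the grid its interior pattern is a conjugacy between $Z_k$ and a finite (offset) extension of the full shift on the alphabet $\mathcal{C}$ over the sublattice $n'\Z^d$, so $h(Z_k) = \tfrac{\log k}{(n')^d}$. As $k$ ranges over $1,\dots,M$ these values start at $0$ and increase in steps of at most $\tfrac{\log 2}{(n')^d}$ up to $\tfrac{\log M}{(n')^d}$, hence are $\tfrac{\log 2}{(n')^d}$-dense in $[0,\tfrac{\log M}{(n')^d}]$. To fill $[0,h(X)]$ I need $M \geq e^{(h(X)-\epsilon)n^d}$, so that $\tfrac{\log M}{(n')^d} \to h(X)$ while the mesh $\tfrac{\log 2}{(n')^d} \to 0$ as $n \to \infty$ and $\epsilon \to 0$. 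A collar shared by that many patterns is delivered by pigeonhole: the number of $X$-admissible patterns on $B$ is at least $e^{(h(X)-\epsilon)n^d}$ by the definition of entropy, while the number of possible collars is at most $|\A|^{O(n^{d-1})} = e^{O(n^{d-1})}$, so some collar $b$ is shared by at least $e^{(h(X)-\epsilon)n^d - O(n^{d-1})} \geq e^{(h(X)-2\epsilon)n^d}$ patterns for large $n$.

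The main obstacle is precisely the word \emph{freely}: the pigeonhole produces a popular collar $b$, but nothing guarantees that $b$ \emph{tiles}, i.e. that overlapping translates of $b$ are mutually consistent and locally admissible. This self-matching is automatic when $b$ is the collar of a configuration periodic with period box $B$, so the argument would be immediate if $X$ had at least $e^{(h(X)-\epsilon)n^d}$ points of period $n\Z^d$. However, $\Z^d$-SFTs with $d \geq 2$ may have very few or no periodic points even when $h(X) > 0$, so this route is unavailable in general and the difficulty is genuine. To circumvent it I would not insist on periodicity of the full collar, but instead build a shift-invariant ``skeleton'' inside $X$ that carves out periodically spaced interior holes, each insulated from its neighbors by a collar of thickness $\geq r$, and then show, again by a counting argument, that the holes admit $\geq e^{(h(X)-\epsilon)n^d}$ fillings compatible with the skeleton; the fillings of distinct holes are then independent exactly because the insulating collar is wider than the interaction range. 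Realizing such a skeleton as a genuine subshift of $X$ (rather than as a single non-invariant configuration, whose orbit closure one cannot control) is where the positive-entropy hypothesis must be spent, presumably via a marker or tiling structure encoded within the symbols of $X$ itself, and I expect this to be the technical heart of the proof.
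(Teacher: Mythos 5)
Your proposal has a genuine gap, and it is the one you yourself name: the existence of the freely interchangeable family $\mathcal{G}$ (or, in the amended version, of the shift-invariant skeleton with insulated holes) is never established, and this is not a deferred technicality but the entire content of the theorem. Your own observation shows why the naive version cannot be repaired: if such a family existed, then taking the subfamily of size $k=1$ (one pattern laid down on every box of the grid $n'\mathbb{Z}^d$) would produce a point of $X$ fixed by the finite-index subgroup $n'\mathbb{Z}^d$, yet there exist $\mathbb{Z}^2$-SFTs with positive entropy and no periodic points at all (e.g.\ the product of an aperiodic SFT with a full shift on two symbols). The proposed skeleton inherits the same defect: a configuration of $X$ whose ``holes'' are periodically spaced is again a strongly structured object that an arbitrary positive-entropy SFT need not contain, and ``encoding a marker structure within the symbols of $X$ itself'' is precisely what one cannot do in general. (There is also a secondary issue you gloss over: even granted $\mathcal{G}$, the shift-invariance and SFT-ness of $Z_k$ require the grid alignment to be locally detectable, which the patterns themselves need not make possible.)

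The paper (following Desai) circumvents both problems by inverting the direction of the construction and by placing the grid in an external factor rather than inside $X$. One forms the product $Z_0 = X \times \Sigma_0$, where $\Sigma_0$ is a zero-entropy exact tiling system --- for $\mathbb{Z}^d$ essentially the finite orbit of offsets of the cube tiling --- so that no point of $X$ is required to be periodic; the tiling layer merely marks where tiles sit, and since $h(\Sigma_0)=0$ the projection back to $X$ costs no entropy. Then, instead of building subsystems bottom-up from freely glued patterns, one removes aligned blocks top-down, one at a time: $Z_{n+1} = Z_n \setminus \beta_n$ whenever some aligned border admits more than one aligned interior. Two counting estimates --- that $|\mathcal{P}(F,Z_n)|$ agrees, up to factors of $|\mathcal{A}|^{\delta|F|}$, with $\sum_t \sum_f \prod_\tau |\ints^a(f(\partial\tau),Z_n)|$, and that forbidding a single block changes each factor $|\ints^a(f(\partial\tau),Z_n)|$ by at most a factor of $2$ --- show that each removal drops entropy by less than $\varepsilon$ while the terminal system has entropy below $\varepsilon$. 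The decisive difference from your scheme is that the sum over all frame labellings $f$ replaces your pigeonholed common collar: tiles are decoupled not by sharing one popular boundary pattern (which forces periodicity) but by the excision lemma for SFTs, which allows interiors to be swapped independently over \emph{any} fixed border. The free-gluing property your argument needs, and which fails for general SFTs, is thereby replaced by a property that every SFT automatically has.
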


{In recent years, several results of the $G= \mathbb Z$ and $G = \mathbb Z^d$ cases have seen extensions to larger classes of groups, especially amenable groups. To name a few: Barbieri \cite{barbieri} has classified the real numbers that are realized as the entropy of a $G$-SFT for many types of amenable $G$ (extending the result of Hochman and Meyerovitch mentioned above); Frisch and Tamuz \cite{frisch_tamuz} have investigated the (topologically) generic properties of $G$-subshifts for arbitrary amenable $G$; Barbieri and Sablik \cite{barbieri-sablik} have shown how an arbitrary effective $G$-subshift, where $G$ is finitely generated, may be simulated by a $G'$-SFT, where $G'$ is the semidirect product $G' = \mathbb Z^2 \rtimes G$; and Huczek and Kopacz \cite{huczek-kopacz} have (very recently) obtained a partial generalization of Boyle's lower entropy factor Theorem~\cite{boyle} to countable amenable groups with the comparison property. In this vein, we prove the following generalization of Theorem~\ref{desai} to arbitrary countable amenable groups.}

\begin{T1}
    Let $G$ be a countable amenable group, let $X$ be a $G$-SFT, and let $Y \subset X$ be any subsystem such that $h(Y) < h(X)$. Then \[
        \bigl\{h(Z) : Y \subset Z \subset X \text{ and $Z$ is an SFT} \bigr\}
    \] is dense in $[h(Y), h(X)]$.
\end{T1}

Choosing $G = \mathbb Z^d$ and $Y = \varnothing$ in the above theorem recovers the result of Desai (Theorem~\ref{desai} above). Note that a shift space $X \subset \mathcal{A}^G$ has at most countably many SFT subsystems, and therefore the set of entropies of SFT subsystems is at most countable. In this sense, Theorem~\ref{main-result-sft-relative} is ``the most one could hope for." 

\begin{remark} \label{Rmk:FrischTamuz}
{
After a preprint of this work was made public, the authors of \cite{frisch_tamuz} made us aware that a short alternate proof of Theorem~\ref{main-result-sft-relative} can be derived from their main results. Specifically, they prove there that for any countable amenable group $G$ and any real $c \geq 0$, the set of $G$-subshifts with entropy $c$ is dense (in fact residual) within the space of $G$-subshifts with entropy at least $c$ with respect to the Hausdorff topology. This result immediately implies that for any $G$-SFT $X$, there exist $G$-subshifts contained in $X$ that achieve all possible entropies in $[0, h(X)]$; then, some simple approximations with $G$-SFTs (in the sense of our Theorem~\ref{sft-entropy-approx}) can be used to obtain a proof of Theorem~\ref{main-result-sft-relative}.}
\end{remark}

For sofic shifts, we obtain the following result.

\begin{T2}
    Let $G$ be a countable amenable group, let $W$ be a sofic $G$-shift, and let $V \subset W$ be any subsystem such that $h(V) < h(W)$. Then \[
        \bigl\{h(U) : V \subset U \subset W \text { and $U$ is sofic} \bigr\}
    \] is dense in $[h(V), h(W)]$.
\end{T2}

{From this result, we can quickly derive the fact (Corollary~\ref{sofic-all-entropies-realized}) that if $X$ is a sofic $G$-shift, then each real number in $[0,h(X)]$ can be realized as the entropy of some (not necessarily sofic) subsystem of $X$. (Recall that the alternate proof of Theorem~\ref{main-result-sft-relative} described in Remark~\ref{Rmk:FrischTamuz} above relies on a version of this result requiring $X$ to be an SFT.) The tool for proving Theorem~\ref{main-result-sofic-relative} (from Theorem~\ref{main-result-sft-relative})} is provided by the following theorem, which may be of independent interest. We note that this result generalizes another theorem of Desai \cite[Proposition 4.3]{desai}, which addressed the case $G = \mathbb{Z}^d$.

\begin{T3}
    Let $G$ be a countable amenable group and let $W$ be a sofic $G$-shift. For every $\varepsilon > 0$, there exists an SFT $\tilde X$ and a one-block code $\tilde\phi : \tilde X \to W$ such that {the maximal entropy gap of $\tilde\phi$ satisfies} $\mathcal H(\tilde\phi) < \varepsilon$.
\end{T3}

The maximal entropy gap $\mathcal H(\tilde\phi)$ is defined in \textsection 2 (Definition \ref{def:max-entropy-gap}). In particular, this result implies that if $Y$ is sofic and $\varepsilon > 0$, then there is an SFT $X$ that factors onto $Y$ and satisfies $h(X) < h(Y) + \varepsilon$.

{Our proofs of Theorems~\ref{main-result-sft-relative}, \ref{sft-close-to-sofic}, and \ref{main-result-sofic-relative} take the same general approach as the arguments given by Desai for the $G = \mathbb{Z}^d$ case.} However, the extension to the general amenable setting requires substantial new techniques. Indeed, our proofs are made possible by the existence of \textit{exact tilings} (Definition \ref{def:quasitilings}) of the group $G$ that possess nice dynamical properties. Such exact tilings are trivial to find for $\mathbb Z^d$ (by tiling the group using large hypercubes), but for arbitrary amenable groups were only recently constructed by Downarowicz, Huczek, and Zhang \cite{tilings}; {their construction is} the main technical tool employed in this paper.

{As mentioned in Remark~\ref{Rmk:FrischTamuz} above, Theorem~\ref{main-result-sft-relative} can be alternately derived from results in \cite{frisch_tamuz}. We present a self-contained proof here for two reasons. Firstly, we would like to present a direct adaptation of the techniques from \cite{desai}, since it demonstrates the power of the improved tiling results of \cite{tilings}. Secondly, this presentation provides a unified approach to all of our proofs, since our proofs in the sofic setting (where we are not aware of alternative proofs) also rely on tiling-based constructions that are similar to those in our proof of Theorem~\ref{main-result-sft-relative}.
}


The paper is organized as follows. In \textsection 2 we discuss basic notions and elementary theorems of symbolic dynamics, set in terms appropriate for countable amenable groups. In \textsection 3 we define and explore the concept of tilings and exact tilings of amenable groups, appealing to Downarowicz, Huczek, and Zhang for the existence of certain desirable tilings. In \textsection 4 we prove our main results for $G$-SFTs, and in \textsection 5 we prove our main results for sofic $G$-shifts. Finally, in \textsection 6 we provide a example of a $\mathbb{Z}^2$ sofic shift whose only SFT subsystem is a fixed point.

\section{Basics of symbolic dynamics}

\subsection{Amenable groups}

We begin with a brief overview of amenable groups. 

\begin{definition}[Group theory notations]
    Let $G$ be a group and let $K$, $F \subset G$ be subsets. We employ the following notations.
    \begin{enumerate}[i.]
        \item The \textit{group identity} is denoted by the symbol $e \in G$,
        \item $KF = \{ kf : k \in K \text{ and } f\in F\}$,
        \item $K^{-1} = \{k^{-1} : k \in K\}$,
        \item $Kg = \{kg : k \in K\}$ for each $g \in G$,
        \item $K \sqcup F$ expresses that $K$ and $F$ are disjoint, and is their \textit{(disjoint) union},
        \item $K \triangle F = (K \setminus F) \sqcup (F \setminus K)$ is the \textit{symmetric difference} of $K$ and $F$, and
        \item $|K|$ is the \textit{cardinality} of the (finite) set $K$.
    \end{enumerate}
\end{definition}

\begin{definition}[F\o lner condition for amenability]
\label{def:folner}
    Let $G$ be a countable group. A \textit{F\o lner sequence} is a sequence $(F_n)_{n}$ of finite subsets $F_n \subset G$ which \textit{exhausts} $G$ (in the sense that for each $g \in G$, we have $g \in F_n$ for all sufficiently large $n$) and for which it holds that \[
        \lim_{n\to\infty} \frac{|KF_n \triangle F_n|}{|F_n|} = 0
    \] for every finite subset $K \subset G$. If such a sequence exists, then $G$ is said to be an \textit{amenable} group.
\end{definition}

Throughout this paper, $G$ denotes a fixed countably infinite amenable group and $(F_n)_{n}$ is a fixed F\o lner sequence for $G$.

\begin{definition}[Invariance]
    Let $K$, $F \subset G$ be finite subsets, and let $\varepsilon > 0$. We say $F$ is $(K,\varepsilon)$-\textit{invariant} if\[
        \frac{|KF\triangle F|}{|F|} < \varepsilon.
    \]
\end{definition}

If $e \in K$ and $F$ is $(K,\varepsilon)$-invariant, then $F$ is also $(K',\varepsilon')$-invariant for any $\varepsilon' > \varepsilon$ and any $K' \subset K$ such that $e\in K'$. If $F$ is $(K,\varepsilon)$-invariant, then so is the translate $Fg$ for each fixed $g \in G$.
Invariance is the primary way by which we say a large finite subset $F \subset G$ is a ``good finite approximation" of $G$, according to the finitary quantifiers $K$ and $\varepsilon$. The amenability of $G$ provides a wealth of nearly invariant sets, which enables such approximation for the purpose of studying the dynamics of $G$-actions. 

Next we develop concepts related to the geometry of finite subsets of $G$.

\begin{definition}[Boundary and interior]
\label{def:bdry+int}
    Let $K$, $F \subset G$ be finite subsets. The $K$-\textit{boundary} of $F$ is the set\[
        \partial_K F = \{f \in F : Kf \not\subset F\},
    \] and the $K$-\textit{interior} of $F$ is the set\[
        \int_K F = \{f \in F : Kf \subset F\}.
    \] Observe that $F = (\partial_K F) \sqcup (\int_K F)$.
\end{definition}

If $F$ is sufficiently invariant with respect to $K$, then the $K$-boundary of $F$ is a small subset of $F$ (proportionally), by the following lemma.

\begin{lemma}
\label{small-bdry}
    Suppose $K$, $F \subset G$ are nonempty finite subsets and $e \in K$. Then\[
        \frac{1}{|K|} |KF\triangle F| \leq |\partial_K F| \leq |K||KF \triangle F|.
    \] In particular, if $F$ is $(K,\varepsilon)$-invariant then $|\partial_K F| < \varepsilon|K||F|$.
\end{lemma}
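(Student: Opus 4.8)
The plan is to prove both inequalities directly from the definition of $\partial_K F$, using at the outset the observation that the hypothesis $e \in K$ forces $F \subseteq KF$ (since $f = ef \in KF$ for every $f \in F$), and hence $KF \triangle F = KF \setminus F$. I would use this identity throughout, so that both bounds become statements relating $|\partial_K F|$ and $|KF \setminus F|$.

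For the lower bound $|KF \triangle F| \leq |K|\,|\partial_K F|$, I would establish the containment $KF \setminus F \subseteq K(\partial_K F)$. Given any $y \in KF \setminus F$, write $y = kf$ with $k \in K$ and $f \in F$. Since $kf = y \notin F$, we have $Kf \not\subset F$, so by definition $f \in \partial_K F$; thus $y = kf \in K(\partial_K F)$. Taking cardinalities yields $|KF \setminus F| \leq |K(\partial_K F)| \leq |K|\,|\partial_K F|$, which is the claimed lower bound after dividing by $|K|$.

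For the upper bound $|\partial_K F| \leq |K|\,|KF \triangle F|$, I would construct an at-most-$|K|$-to-one map $\psi \colon \partial_K F \to KF \setminus F$. For each $f \in \partial_K F$, the defining property $Kf \not\subset F$ guarantees a choice of $k_f \in K$ with $k_f f \notin F$; set $\psi(f) = k_f f$, which lies in $KF \setminus F$. To bound the fibers, note that if $\psi(f) = y$ then $f = k_f^{-1} y \in K^{-1} y$, so the preimage $\psi^{-1}(y)$ is contained in $K^{-1}y$, a set of at most $|K^{-1}| = |K|$ elements. Hence $|\partial_K F| \leq |K|\,|\psi(\partial_K F)| \leq |K|\,|KF \setminus F|$.

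The final claim is then immediate: if $F$ is $(K,\varepsilon)$-invariant, then $|KF \triangle F| < \varepsilon |F|$ by definition, and combining this with the upper bound gives $|\partial_K F| \leq |K|\,|KF \triangle F| < \varepsilon |K| |F|$. I do not anticipate a genuine obstacle, as the lemma is elementary combinatorics; the only point requiring care is the noncommutativity of $G$, which forces one to track left versus right multiplication precisely---in particular, verifying that the fibers of $\psi$ land in the \emph{left} translate $K^{-1}y$ rather than a right translate.
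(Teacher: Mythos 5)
Your proof is correct and follows essentially the same route as the paper's: both bounds rest on the identity $KF \triangle F = KF\setminus F$ (from $e \in K$) together with the containments $KF\setminus F \subset K(\partial_K F)$ and $\partial_K F \subset K^{-1}(KF\setminus F)$, the latter of which you merely rephrase as a fiber-counting bound for your map $\psi$. No gaps; the argument matches the paper's proof step for step.
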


\begin{proof}
    If $e\in K$, then $KF \triangle F = KF \setminus F$. If $g \in KF \setminus F$, then $g = kf$ for some $k \in K$ and $f \in \partial_K F$, by Definition \ref{def:bdry+int}. Therefore $KF \setminus F \subset K \partial_K F$, in which case $|KF\setminus F| \leq |K||\partial_K F|$.
    
    For the second inequality, note that $f \in \partial_K F$ implies $\exists k \in K$ such that $kf \not\in F$, therefore $g = kf \in KF \setminus F$ is a point such that $f\in K^{-1}g \subset K^{-1}(KF\setminus F)$.  Consequently $\partial_K F \subset K^{-1}(KF\setminus F)$, in which case $|\partial_K F| \leq |K||KF \setminus F|$.
    
    Finally if $F$ is $(K,\varepsilon)$-invariant, then $|\partial_KF| \leq |K||KF \setminus F| < \varepsilon|K||F|$.
\end{proof}

Given finite subsets $K$, $F \subset G$, in this paper we focus on the $KK^{-1}$-boundary and $KK^{-1}$-interior of $F$ (rather than the $K$-boundary and $K$-interior), and we make use of the following lemma. 

\begin{lemma}
\label{int-complement}
    Let $K$, $F \subset G$. For any translate $Kg$ of $K$ (for any $g \in G$), either $Kg \subset F$ or $Kg \subset \big(\int_{KK^{-1}} F\big)^c$ (or both are true).
\end{lemma}

\begin{proof}
    Suppose $Kg \not\subset \big(\int_{KK^{-1}} F\big)^c$. Then $\exists f \in \int_{KK^{-1}} F$ such that $f \in Kg$, which implies $g \in K^{-1} f$ and hence $Kg \subset KK^{-1}f \subset F$.
\end{proof}

\subsection{Shift spaces}

Here we present necessary definitions from symbolic dynamics. See Lind and Marcus \cite{lind_marcus} for an introductory treatment of these concepts.

\begin{definition}[Shifts and subshifts]
\label{def:subshifts}
    Let $\A$ be a finite set of symbols equipped with the discrete topology. A function $x : G \to \A$ is called an $\A$-\textit{labelling} of $G$. By convention, we write $x_g$ for the symbol $x(g) \in \A$ which is placed by $x$ at $g \in G$. The set of all $\A$-labellings of $G$ is denoted $\A^G$, which we equip with the product topology. For each $g \in G$, let $\sigma^g : \A^G \to \A^G$ denote the map given by\[
        (\sigma^g x)_h = x_{hg} \quad \forall h \in G
    \] for each $x\in \A^G$. The collection $\sigma = (\sigma^g)_{g\in G}$ is an action of $G$ on $\A^G$ by homeomorphisms. The pair $(\A^G, \sigma)$ is a dynamical system called the \textit{full shift} over the {alphabet} $\A$. A subset $X \subset \A^G$ is called \textit{shift-invariant} if $\sigma^g x \in X$ for each $x \in X$ and $g \in G$. A closed, shift-invariant subset $X \subset \A^G$ is called a \textit{subshift} or a \textit{shift space}. For a given $x \in \A^G$, the \textit{orbit} of $x$ is the subset $\mathcal O(x) = \{\sigma^g x : g \in G\} \subset \A^G$. The subshift \textit{generated by} $x$ is the topological closure of $\mathcal O(x)$ as a subset of $\A^G$, and is denoted $\overline{\mathcal O}(x) \subset \A^G$.
\end{definition}

\begin{definition}[Codes and factors]
\label{def:block-codes}
    Let $\A_X$, $\A_W$ be finite alphabets and let $X \subset \A_X^G$ and $W \subset \A_W^G$ be subshifts. A map $\phi : X \to W$ is \textit{shift-commuting} if $\phi \circ \sigma^g = \sigma^g \circ \phi$ for each $g \in G$; the map $\phi$ is said to be a \textit{sliding block code} if it is continuous and shift-commuting; and $\phi$ is said to be a \textit{factor map} if it is a surjective sliding block code. If a factor map exists from $X$ to $W$, then $W$ is said to be a \textit{factor} of $X$ and $X$ is said to \textit{factor onto} $W$. If a sliding block code $\phi : X \to W$ is invertible and bi-continuous, then $\phi$ is said to be a \textit{topological conjugacy}, in which case $X$ and $W$ are said to be \textit{topologically conjugate}.
\end{definition}

\begin{definition}[Products of shifts]
\label{def:product-of-shifts}
    If $\A$ and $\Sigma$ are finite alphabets, then $\A \times \Sigma$ is also a finite alphabet (of ordered pairs). If $X \subset \A^G$ and $T \subset \Sigma^G$ are subshifts, then we view the \textit{dynamical direct product} $X \times T$ as a subshift of $(\A \times \Sigma)^G$, defined by $(x,t) \in X \times T$ if and only if $x \in X$ and $t \in T$. The shift space $X \times T$ factors onto both $X$ and $T$ via the projection maps $\pi_X$ and $\pi_T$, given by $\pi_X(x,t) = x$ and $\pi_T(x,t) = t$ for each $(x,t) \in X\times T$.
\end{definition}

\begin{remark}
    Definition \ref{def:product-of-shifts} above introduces an abuse of notation, as technically we have $(x,t) \in \A^G \times \Sigma^G \neq (\A \times \Sigma)^G$. However, if equipped with the $G$-action $\varsigma$ given by $\varsigma^g(x,t) = (\sigma^g x,\, \sigma^g t)$, then $\A^G \times \Sigma^G$ becomes a dynamical system that is topologically conjugate to $(\A \times \Sigma)^G$.
\end{remark}

\subsection{Patterns}

In this section we describe \textit{patterns} and their related combinatorics.

\begin{definition}[Patterns]
    Let $\A$ be a finite alphabet and let $F \subset G$ be a finite set. A function $p : F \to \A$ is called a \textit{pattern}, said to be of \textit{shape} $F$. The set of all patterns of shape $F$ is denoted $\A^F$. The set of all patterns of any finite shape is denoted $\A^* = \bigcup_F \A^F$, where the union is taken over all finite subsets $F \subset G$.
\end{definition}

\begin{remark}
    Given a point $x \in \A^G$ and a finite subset $F \subset G$, we take $x(F)$ to mean the \textit{restriction} of $x$ to $F$, which is itself a pattern of shape $F$. Usually this is denoted $x|_F \in \A^F$, but we raise $F$ from the subscript for readability.
\end{remark}

\begin{definition}[One-block code]
    Let $\A_X$ and $\A_W$ be finite alphabets and let  $X \subset \A_X^G$ and $W \subset \A_W^G$ be subshifts. A factor map $\phi : X \to W$ is said to be a \textit{one-block code} if there exists a function $\Phi : \A_X \to \A_W$ with the property that \[
        \phi(x)_g = \Phi(x_g), \quad \forall g\in G
    \] for each $x \in X$.
\end{definition}

\begin{definition}[Occurrence]
\label{def:occurrence}
    Let $\A$ be a finite alphabet and let $F \subset G$ be a finite set.
    A pattern $p \in \A^F$ is said to \textit{occur} in a point $x \in \A^G$ if there exists an element $g \in G$ such that $(\sigma^g x)(F) = p$. If $X \subset \A^G$ is a subshift, then the collection of all patterns of shape $F$ occurring in any point of $X$ is denoted by \[
        \P(F,X) = \{(\sigma^g x)(F) \in \A^F : x\in X \text{ and } g\in G\}.
    \]
\end{definition}    
    
    If $X\subset \A^G$ is a subshift and $F \subset G$ is a finite subset, then $|\P(F,X)| \leq |\A|^{|F|}$. If $F' \subset G$ is another finite subset, then $|\P(F\cup F', X)| \leq |\P(F,X)| \cdot |\P(F', X)|$. If $F' \subset F$ and $X' \subset X$, then $|\P(F',X')| \leq |\P(F,X)|$.
    
\begin{definition}[Forbidden patterns]
    Let $\A$ be a finite alphabet, let $F \subset G$ be a finite set and let $X \subset \A^G$ be a subshift. A pattern $p \in \A^F$ is said to be \textit{allowed} in $X$ if $p \in \P(F,X)$ (if $p$ occurs in at least one point of $X$).
    
    Given a (finite or infinite) collection of patterns $\F \subset \A^*$, a new subshift $X' \subset X$ may be constructed by expressly \textit{forbidding} the patterns in $\F$ from occurring in points of $X$. We denote this by \[
        X' = \mathcal R(X,\F) = \{x \in X : \forall p \in \F, \ \text{$p$ does not occur in $x$}\}.
    \] For a single pattern $p$, we abbreviate $\mathcal R(X, \{p\})$ as $X \setminus p$. The shift $X$ is said to be \textit{specified} by the collection $\F$ if $X = \mathcal R(\A^G, \F)$.
\end{definition}

\subsection{Shifts of finite type}

In this section, we define shifts of finite type and sofic shifts over $G$. We also discuss many related elementary facts.

\begin{definition}[SFTs]
\label{def:SFT}
    A subshift $X \subset \A^G$ is a \textit{shift of finite type (SFT)} if there is a finite collection $\F \subset \A^*$ such that $X = \mathcal R(\A^G, \F)$. For an SFT, it is always possible to take $\F$ in the form $\F = \A^K \setminus \P(K,X)$ for some large finite subset $K \subset G$. In this case, we  say $X$ is \textit{specified} by (patterns of shape) $K$.
\end{definition}

If $X \subset \A^G$ is an SFT specified by a finite subset $K \subset G$, then it holds that \[
    x \in X \iff \forall g\in G\ \big( (\sigma^g x)(K) \in \P(K,X) \big)
\] for each $x \in \A^G$. If $K$ specifies $X$, then so does $K'$ for any (finite) subset $K' \supset K$. If $X$ and $T$ are SFTs, then so is the dynamical direct product $X \times T$. 

\begin{definition}[Sofic shifts]
\label{def:sofic}
    A subshift $W$ is \textit{sofic} if there exists an SFT $X$ which factors onto $W$.
\end{definition}

The following elementary facts are needed; we abbreviate the proofs as they are similar to the well-known the proofs in the case where $G = \mathbb Z$ (see \cite{lind_marcus}).

\begin{prop}
\label{sofic-sft-1block-code}
    Let $X$ be an SFT, let $W$ be a sofic shift, and let $\phi : X \to W$ be a factor map. Then there exists an SFT $\tilde X$ and a topological conjugacy $\tilde \phi : \tilde X \to X$ such that the composition $\phi \circ \tilde \phi : \tilde X \to W$ is a one-block code.
\end{prop}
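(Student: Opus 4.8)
The plan is to use the classical higher-block recoding, adapted to the group setting. Since $\phi : X \to W$ is a sliding block code, it is continuous and shift-commuting, so there exist a finite set $M \subset G$ with $e \in M$ and a local rule $\Phi : \P(M,X) \to \A_W$ satisfying $\phi(x)_g = \Phi\big((\sigma^g x)(M)\big)$ for every $x \in X$ and $g \in G$; continuity supplies the finite window $M$, and shift-commutativity propagates the rule from $e$ to every $g$. I would then enlarge $M$ if necessary so that it contains a finite set $K$ that specifies $X$ (Definition~\ref{def:SFT}).

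Next I would introduce the higher-block presentation of $X$ over $M$. Let $\A_{\tilde X} = \P(M, X)$ be the new alphabet and define the recoding map $\beta : X \to \A_{\tilde X}^G$ by $\beta(x)_g = (\sigma^g x)(M)$, setting $\tilde X = \beta(X)$. The map $\beta$ is continuous and shift-commuting, hence a sliding block code. Because $e \in M$, each symbol $\beta(x)_g$ records $x_g$ at coordinate $e$, so the one-block map $\psi : \tilde X \to X$ defined by $\psi(\tilde x)_g = \tilde x_g(e)$ satisfies $\psi \circ \beta = \mathrm{id}_X$ and $\beta \circ \psi = \mathrm{id}_{\tilde X}$. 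Thus $\beta$ is a topological conjugacy, and I would take $\tilde\phi = \psi = \beta^{-1} : \tilde X \to X$.

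With $\tilde\phi$ in hand, the one-block property of the composition is essentially immediate. For $\tilde x \in \tilde X$ and $x = \psi(\tilde x)$, one checks that $(\sigma^g \psi(\tilde x))(M) = \tilde x_g$ using the internal consistency of points of $\tilde X = \beta(X)$, and therefore $(\phi \circ \tilde\phi)(\tilde x)_g = \Phi\big((\sigma^g x)(M)\big) = \Phi(\tilde x_g)$. Hence $\phi \circ \tilde\phi$ is exactly the one-block code on $\tilde X$ induced by $\Phi : \A_{\tilde X} \to \A_W$.

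The remaining—and principal—point is to verify that $\tilde X$ is an SFT, which is where the finite-type hypothesis on $X$ is genuinely used. I would do this directly by exhibiting a finite specifying window: the claim is that $\tilde x \in \tilde X$ if and only if, for every pair $g, h$ with $hg^{-1} \in M^{-1}M$, the symbols $\tilde x_g, \tilde x_h \in \P(M,X)$ are \emph{overlap-consistent}, meaning $\tilde x_g(m) = \tilde x_h(m')$ whenever $mg = m'h$. Points of $\beta(X)$ clearly satisfy this. Conversely, overlap consistency forces $(\sigma^g \psi(\tilde x))(M) = \tilde x_g$ for all $g$ (take $h = mg$ and compare the two patterns at the common point $mg$), so $\beta(\psi(\tilde x)) = \tilde x$; moreover each $(\sigma^g \psi(\tilde x))(K)$ is then a subpattern of the allowed pattern $\tilde x_g \in \P(M,X)$ and hence lies in $\P(K,X)$, so $\psi(\tilde x) \in X$ because $K$ specifies $X$. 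Since overlap consistency constrains only coordinates $\tilde x_h$ with $h$ in a fixed finite neighborhood of each $g$, it amounts to forbidding a finite collection $\F$ of patterns, giving $\tilde X = \mathcal R(\A_{\tilde X}^G, \F)$. (Equivalently, one may invoke that the SFT property is a topological conjugacy invariant, the direct argument above being precisely the proof of that fact in this setting.) The only real care needed in the amenable generality is the bookkeeping with translates $Mg$ and the set $M^{-1}M$ in the consistency condition; everything else parallels the well-known $G = \mathbb Z$ argument.
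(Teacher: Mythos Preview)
Your proposal is correct and follows essentially the same higher-block recoding argument as the paper: take the window $M$ (the paper's $K$) that both determines $\phi$ locally and specifies $X$, pass to the alphabet $\P(M,X)$, and read off the conjugacy via the $e$-coordinate. In fact you supply more detail than the paper, which merely asserts that $\tilde X$ is an SFT specified by patterns of shape $K^{-1}K$; your overlap-consistency argument is exactly the justification the paper omits.
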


\begin{proof}
    Because $\phi$ is continuous and shift-commuting, there exists a large finite subset $K \subset G$ such that for each $x$, $x'\in X$ and each $g \in G$, it holds that \[
        (\sigma^g x)(K) = (\sigma^g x')(K) \implies \phi(x)_g = \phi(x')_g.
    \] Suppose that $e\in K$ and that $\P(K,X)$ specifies $X$ as an SFT. Let $\tilde \A = \P(K,X)$ be a new finite alphabet, and let $\tilde X \subset \tilde \A^G$ be the set of all points $\tilde x \in \tilde \A^G$ such that \[
        \exists x\in X, \, \forall g\in G, \,  \tilde x_g = (\sigma^g x)(K).
    \] Then $\tilde X$ is an SFT specified by patterns of shape $K^{-1}K$. The map $\tilde \phi : \tilde X \to X$ desired for the theorem is given by \[
        \tilde \phi(\tilde x)_g = (\tilde x_g)_e \in \A, \quad \forall g\in G, \, \forall \tilde x \in \tilde X.
    \]
\end{proof}

\begin{prop}
\label{elem-sft-result1}
    For any subshift $X \subset \A^G$, there is a descending family of SFTs $(X_n)_n$ such that $X = \bigcap_n X_n$.
\end{prop}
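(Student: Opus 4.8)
The plan is to approximate $X$ from above by the SFTs obtained by recording the allowed patterns of $X$ on larger and larger finite windows. Since $G$ is countable, I would first fix an enumeration $G = \{g_1, g_2, \dots\}$ and set $E_n = \{g_1, \dots, g_n\}$, so that $(E_n)_n$ is an increasing sequence of finite subsets exhausting $G$. For each $n$, I would define $X_n = \mathcal R\bigl(\A^G,\, \A^{E_n} \setminus \P(E_n, X)\bigr)$, the SFT whose forbidden patterns are exactly those of shape $E_n$ that do not occur in $X$; its allowed shape-$E_n$ patterns are then precisely $\P(E_n, X)$. Each $X_n$ is an SFT by Definition~\ref{def:SFT}, being specified by the finite subset $E_n$.

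Next I would verify the required containments. For the descending property $X_{n+1} \subset X_n$: given $x \in X_{n+1}$ and any occurrence $p = (\sigma^g x)(E_n)$ of a shape-$E_n$ pattern in $x$, I would extend it to the shape-$E_{n+1}$ pattern $q = (\sigma^g x)(E_{n+1})$, which lies in $\P(E_{n+1}, X)$ because $x \in X_{n+1}$; restricting a witnessing point of $X$ back to $E_n \subset E_{n+1}$ shows that $p \in \P(E_n, X)$, whence $x \in X_n$. For the inclusion $X \subset \bigcap_n X_n$: every shape-$E_n$ pattern occurring in a point of $X$ lies in $\P(E_n, X)$ by definition, so no point of $X$ contains a forbidden pattern, giving $X \subset X_n$ for all $n$.

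The crux is the reverse inclusion $\bigcap_n X_n \subset X$, which is the only step that uses the hypothesis that $X$ is closed. I would argue by contraposition: if $x \notin X$, then since $X$ is closed in the product topology there is a finite window $F$ such that every point agreeing with $x$ on $F$ lies outside $X$; equivalently, the pattern $x(F)$ occurs in no point of $X$, i.e.\ $x(F) \notin \P(F, X)$. Choosing $n$ with $F \subset E_n$, membership $x \in X_n$ would force $x(E_n) \in \P(E_n, X)$ (taking $g = e$ in the occurrence of $x(E_n)$ in $x$), and restricting a witness to $F$ would yield $x(F) \in \P(F,X)$, a contradiction. Hence $x \notin X_n$ for this $n$, so $x \notin \bigcap_n X_n$. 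Combining the two inclusions gives $X = \bigcap_n X_n$. I expect this last step to be the only point of substance, and it is mild: it merely converts closedness in the product topology into the statement that failure to lie in $X$ is detected on some finite window, which is exactly the statement that some finite restriction of $x$ is disallowed in $X$.
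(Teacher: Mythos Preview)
Your proof is correct and follows essentially the same approach as the paper: both approximate $X$ from above by SFTs obtained by forbidding finitely many of the patterns that do not occur in $X$, and both rely on closedness of $X$ to conclude $\bigcap_n X_n \subset X$. The only difference is bookkeeping---the paper enumerates the forbidden patterns $p_1, p_2, \dots$ one at a time and sets $X_n = \mathcal R(\A^G,\{p_1,\dots,p_n\})$, whereas you group them by an exhausting sequence of shapes $E_n$; your version is more explicit but not a genuinely different route.
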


\begin{proof}
    Let $(p_n)_n$ enumerate $\{p\in \A^* : \text{$p$ does not occur in $X$}\}$, and for each $n$ let\[
        X_n = \mathcal R\big(\A^G, \{p_1, p_2, \ldots, p_n\}\big).
    \] Then $(X_n)_n$ witnesses the result.
\end{proof}

\begin{prop}
\label{elem-sft-result2}
    Let $X \subset \A^G$ be a subshift and let $X_0 \subset \A^G$ be an SFT such that $X \subset X_0$. If $(X_n)_n$ is any descending family of subshifts such that $X = \bigcap_n X_n$, then $X_n \subset X_0$ for all sufficiently large $n$.
\end{prop}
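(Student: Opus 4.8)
# Proof Proposal for Proposition~\ref{elem-sft-result2}

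The plan is to exploit the finite-type structure of $X_0$ together with the descending-intersection hypothesis, using the defining property of an SFT that membership is determined by a single finite shape. Since $X_0$ is an SFT, by Definition~\ref{def:SFT} there is a finite subset $K \subset G$ specifying $X_0$, so that a point $z \in \A^G$ lies in $X_0$ if and only if every $K$-shaped pattern occurring in $z$ lies in $\P(K, X_0)$. Thus to force $X_n \subset X_0$ it suffices to show that for all large $n$, every pattern of shape $K$ occurring in $X_n$ also occurs in $X_0$, i.e. $\P(K, X_n) \subset \P(K, X_0)$.

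The main step is to argue by contradiction using a compactness/finiteness argument. Suppose the conclusion fails; then there is a subsequence $n_1 < n_2 < \cdots$ along which $X_{n_k} \not\subset X_0$ for every $k$. Because each $X_{n_k}$ fails to be contained in the SFT $X_0$, and $X_0$ is specified by patterns of shape $K$, for each $k$ there must be a pattern $p_k \in \P(K, X_{n_k})$ with $p_k \notin \P(K, X_0)$. The key finiteness observation is that $\P(K, \A^G) = \A^K$ is a finite set (it has at most $|\A|^{|K|}$ elements), so the patterns $p_k$ take only finitely many values. Hence some fixed pattern $p \in \A^K \setminus \P(K, X_0)$ occurs as $p_k = p$ for infinitely many $k$.

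Now I would use the descending property to derive the contradiction. Since $(X_n)_n$ is descending, we have $X_{n_k} \subset X_{n_j}$ whenever $k \geq j$, and therefore $\P(K, X_{n_k}) \subset \P(K, X_{n_j})$ by the monotonicity of pattern sets noted after Definition~\ref{def:occurrence}. Consequently, if $p \in \P(K, X_{n_k})$ for infinitely many $k$, then $p \in \P(K, X_n)$ for \emph{every} $n$ in the sequence, and indeed for every $n$ (since any $X_n$ eventually contains some $X_{n_k}$ with large $k$). This means $p$ occurs in $X_n$ for all $n$, so by choosing witnessing points $x^{(n)} \in X_n$ in which $p$ occurs (after translating, occurring at the identity patch $K$) and passing to a convergent subsequence using compactness of $\A^G$, the limit point $x$ satisfies $x(K) = p$ and $x \in \bigcap_n X_n = X$. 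But $X \subset X_0$ by hypothesis, forcing $p = x(K) \in \P(K, X_0)$, contradicting the choice of $p$.

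The step I expect to require the most care is the passage from ``$p$ occurs in every $X_n$'' to ``$p$ occurs in $X$.'' This is where compactness of the full shift $\A^G$ (in the product topology) and the closedness of each $X_n$ are essential: one extracts from the witnessing points a convergent subnet whose limit lies in the intersection $X$, legitimately transferring the occurrence of the finite pattern $p$ to the limit. The alternative, cleaner route that avoids explicit subsequence extraction is to observe directly that $\bigcap_n \P(K, X_n) = \P(K, X)$ for a descending sequence of subshifts (again by compactness), which immediately gives $\P(K, X_n) \subset \P(K, X_0)$ for large $n$ once we know $\P(K, X) \subset \P(K, X_0)$; this latter containment is exactly the statement that $X \subset X_0$ restricted to shape $K$. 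I would likely present this cleaner version, reducing the whole argument to the finiteness of $\A^K$ together with the equality $\P(K, X) = \bigcap_n \P(K, X_n)$.
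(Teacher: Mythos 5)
Your proposal is correct and follows essentially the same route as the paper: fix a shape $K$ specifying $X_0$, reduce the claim to $\P(K,X_n) \subset \P(K,X_0)$, and combine finiteness of $\A^K$ with the descending property and compactness (which gives $\bigcap_n \P(K,X_n) = \P(K,X)$). In fact, the ``cleaner version'' you sketch in your final paragraph---that $\bigl(\P(K,X_n)\bigr)_n$ is a descending family of finite sets, hence eventually constant and equal to $\P(K,X) \subset \P(K,X_0)$---is precisely the paper's proof, so you could drop the contradiction scaffolding entirely.
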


\begin{proof}
    Take $K\subset G$ to specify $X_0$ as an SFT. Note $\big(\P(K,X_n)\big)_n$ is a descending family of finite sets, and it is therefore eventually constant. In particular, we have \[
        \P(K,X_n) = \P(K,X) \subset \P(K,X_0)
    \] for all sufficiently large $n$. 
\end{proof}

When $G = \Z^d$, SFTs are often reduced via conjugacy to so-called \textit{1-step} SFTs, in which the allowed patterns are specified by a $d$-hypercube of side-length 1. Such SFTs are often desired because they allow for a kind of ``surgery" of patterns. If two patterns occur in two different labellings from a 1-step SFT, and yet they agree on their 1-boundaries, then the first may be \textit{excised} and \textit{replaced} by the second. This yields a new labelling which also belongs to the 1-step SFT.  Although there is no obvious notion of 1-step SFTs when $G \neq \mathbb Z^d$, we do have the following result which allows for this sort of excision and replacement of patterns.

\begin{lemma}
\label{excision_lemma}
    Let $X \subset \A^G$ be an SFT specified by $K \subset G$, let $F \subset G$ be a finite subset, and let $x$, $y \in X$ be two points such that $x$ and $y$ agree on $\partial_{KK^{-1}} F$. Then the point $z$, defined by $z_g = y_g$ if $g \in F$ and $z_g = x_g$ if $g \notin F$, also belongs to $X$. 
\end{lemma}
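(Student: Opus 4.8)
The plan is to verify membership of $z$ in $X$ directly through the defining condition for an SFT specified by $K$: since $K$ specifies $X$, a point $w \in \A^G$ lies in $X$ if and only if $(\sigma^g w)(K) \in \P(K,X)$ for every $g \in G$. Thus it suffices to show, for each fixed $g \in G$, that the local pattern $(\sigma^g z)(K)$ is allowed in $X$. Observe that $(\sigma^g z)(K)$ is completely determined by the values of $z$ on the translate $Kg$, so the entire argument reduces to understanding how $z$ behaves on an arbitrary translate of $K$.

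The key step is to invoke Lemma~\ref{int-complement}, which gives a clean dichotomy: for the given $g$, either $Kg \subset F$, or $Kg \subset \big(\int_{KK^{-1}} F\big)^c$. I would handle these two cases separately. In the first case, $Kg \subset F$ means $z$ agrees with $y$ on all of $Kg$ (by the definition of $z$), so $(\sigma^g z)(K) = (\sigma^g y)(K)$, which lies in $\P(K,X)$ because $y \in X$. In the second case, I claim $z$ agrees with $x$ on all of $Kg$: indeed $z$ agrees with $x$ on $Kg \setminus F$ automatically, and on the remaining part $Kg \cap F$ I would use that $Kg \cap F \subset F \cap \big(\int_{KK^{-1}} F\big)^c = \partial_{KK^{-1}} F$ (using the decomposition $F = \partial_{KK^{-1}} F \sqcup \int_{KK^{-1}} F$ from Definition~\ref{def:bdry+int}). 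On $\partial_{KK^{-1}} F$ the points $x$ and $y$ agree by hypothesis, and $z = y$ there, so $z = x$ on $Kg \cap F$ as well. Hence $(\sigma^g z)(K) = (\sigma^g x)(K) \in \P(K,X)$ since $x \in X$.

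In either case $(\sigma^g z)(K) \in \P(K,X)$, and since $g$ was arbitrary, the SFT condition yields $z \in X$. The substance of the argument is entirely concentrated in the dichotomy of Lemma~\ref{int-complement}: it is precisely the statement that every $K$-translate either sits inside $F$ (where $z$ looks like $y$) or avoids the $KK^{-1}$-interior of $F$ (where $z$ looks like $x$ on the overlap, thanks to the boundary agreement). The main thing to get right — and the only place the hypothesis on $\partial_{KK^{-1}} F$ is used — is the bookkeeping in the second case showing that $Kg \cap F$ is confined to the boundary set, so that the two definitions of $z$ (as $y$ on $F$, as $x$ off $F$) are reconciled by $x$ and $y$ coinciding there. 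No delicate estimates are needed; this is a purely combinatorial excision-and-replacement verification.
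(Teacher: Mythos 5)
Your proof is correct and follows essentially the same route as the paper's: both reduce membership in $X$ to checking each translate $Kg$, invoke the dichotomy of Lemma~\ref{int-complement}, and in the second case use the boundary agreement of $x$ and $y$ to conclude $(\sigma^g z)(K) = (\sigma^g x)(K)$. Your write-up merely spells out the bookkeeping ($Kg \cap F \subset \partial_{KK^{-1}}F$) that the paper compresses into the observation $Kg \subset (F^c) \sqcup (\partial_{KK^{-1}}F)$.
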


\begin{proof}
    Let $g\in G$. By Lemma~\ref{int-complement}, either $Kg \subset F$ or $Kg \subset \big(\int_{KK^{-1}} F\big)^c$. In the first case, we have $(\sigma^g z)(K) = (\sigma^g y)(K)$ which is an allowed pattern in $X$. In the second case, we have $Kg \subset (F^c) \sqcup (\partial_{KK^{-1}}F)$. Since $x$ and $y$ agree on $\partial_{KK^{-1}} F$, we have $(\sigma^g z)(K) = (\sigma^g x)(K)$ which is again an allowed pattern in $X$. In either case, $(\sigma^g z)(K)$ is allowed in $X$ for every $g$, hence $z \in X$.
\end{proof}

\subsection{Entropy}

Let $X \subset \A^G$ be a nonempty subshift. Recall that for a given large finite set $F \subset G$, the number of patterns of shape $F$ that occur in any point of $X$ is $|\P(F,X)|$, which is at most $|\A|^{|F|}$.  As this grows exponentially (with respect to $|F|$), we are interested in the \textit{exponential growth rate} of $|\P(F,X)|$ as $F$ becomes very large and approaches the whole group $G$. For nonempty finite sets $F \subset G$, we let \[
    h(F,X) = \frac1{|F|}\log|\P(F,X)|.
\] If $F$, $F' \subset G$ are disjoint finite subsets, then $h(F \sqcup F', X) \leq h(F,X) + h(F', X)$. This is because $|\P(F\sqcup F',X)| \leq |\P(F,X)| \cdot |\P(F',X)|$ and \[
    \frac1{|F\sqcup F'|} = \frac1{|F|+|F'|} \leq \min \Big( \frac1{|F|},\, \frac1{|F'|} \Big).
\] 

\begin{definition}[Entropy]
\label{def:entropy}
    Let $X$ be a nonempty subshift. The \textit{(topological) entropy} of $X$ is the nonnegative real number $h(X)$ given by the limit \[
        h(X) = \lim_{n\to\infty} h(F_n, X),
    \] where $(F_n)_n$ is again the F\o lner sequence of $G$.  For the empty subshift, we adopt the convention that $h(\varnothing) = 0$.
\end{definition}

It is well-known that the limit above exists, does not depend on the choice of F\o lner sequence for $G$, and is an invariant of topological conjugacy (see \cite{kerr_li}).

For any subshift $X \subset \A^G$ and any finite subset $F\subset G$ it holds that $h(F,X) \leq \log |\A|$, and consequently $h(X) \leq \log|\A|$. More generally, if $X$ and $X'$ are subshifts such that $X \subset X'$, then $h(F,X) \leq h(F,X')$ for every finite subset $F \subset G$ and consequently $h(X) \leq h(X')$. If $X$ and $X'$ are subshifts over $\A$, then so is $X \cup X'$ and $h(X \cup X') = \max\big(h(X), h(X')\big)$.

The following proposition is a classical fact; a proof is given in \cite{kerr_li}.

\begin{prop}
    Let $G$ be a countable amenable group. If a $G$-shift $W$ is a factor of a $G$-shift $X$, then $h(W) \leq h(X)$.
\end{prop}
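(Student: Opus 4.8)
The plan is to exploit the fact that a factor map, being a sliding block code, is determined by a finite window, and then to compare pattern counts directly. First I would extract the coding window: since $\phi : X \to W$ is continuous and shift-commuting, the argument at the start of the proof of Proposition~\ref{sofic-sft-1block-code} produces a finite set $K \subset G$ (which I may enlarge so that $e \in K$) with the property that $(\sigma^g x)(K)$ determines $\phi(x)_g$ for every $x \in X$ and $g \in G$. Equivalently, $\phi(x)_g$ depends only on the restriction $x(Kg)$.

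The key step is a counting inequality. Fix a finite $F \subset G$. Because $\phi(x)_g$ depends only on $x(Kg)$ and $Kg \subset KF$ for each $g \in F$, the pattern $\phi(x)(F)$ depends only on $x(KF)$; this makes the assignment $x(KF) \mapsto \phi(x)(F)$ a well-defined map from $\P(KF,X)$ into $\A_W^F$. Moreover, every element of $\P(F,W)$ arises this way: if $q = (\sigma^g w)(F)$ with $w \in W$ and $g \in G$, then writing $w = \phi(x)$ (surjectivity of $\phi$) and using $\sigma^g \phi(x) = \phi(\sigma^g x)$, we obtain $q = \phi(\sigma^g x)(F)$ with $\sigma^g x \in X$. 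Hence the map surjects onto $\P(F,W)$, giving $|\P(F,W)| \le |\P(KF,X)|$.

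Finally I would pass to the entropy limit along the F\o lner sequence, taking $F = F_n$. The obstacle is the shape mismatch: the right-hand count is over $KF_n$, not $F_n$. I would resolve it using the subadditivity already recorded in the excerpt: since $e \in K$, we have $KF_n = F_n \sqcup (KF_n \setminus F_n)$, so $|\P(KF_n,X)| \le |\P(F_n,X)| \cdot |\A_X|^{|KF_n \setminus F_n|}$. Combining with the counting inequality, dividing by $|F_n|$, and taking logarithms yields
\[
    h(F_n,W) \;\le\; h(F_n,X) + \frac{|KF_n \setminus F_n|}{|F_n|}\log|\A_X|.
\]
Now $|KF_n \setminus F_n| \le |KF_n \triangle F_n|$, so the F\o lner condition forces the error term to vanish as $n \to \infty$; letting $n \to \infty$ then gives $h(W) \le h(X)$. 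The main thing to get right is precisely this amenability estimate controlling the boundary overhead $KF_n \setminus F_n$, since it is what absorbs the discrepancy between counting over $KF_n$ and over $F_n$; everything else is bookkeeping with the coding window.
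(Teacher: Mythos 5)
Your proof is correct; note, however, that the paper does not actually contain its own proof of this proposition --- it is stated as a classical fact with a pointer to the literature (\cite{kerr_li}), so there is no internal argument to compare against. What you have written is a self-contained combinatorial proof assembled entirely from tools the paper already has on hand: the coding-window extraction used at the start of the proof of Proposition~\ref{sofic-sft-1block-code}, the submultiplicativity of pattern counts $|\P(F \cup F', X)| \leq |\P(F,X)| \cdot |\P(F',X)|$, and the F\o lner condition to kill the boundary overhead. The steps all check out: the map $\P(KF,X) \to \A_W^F$ is well defined because two points of $X$ agreeing on $KF$ have $\phi$-images agreeing on $F$ (and, by shift-invariance of $X$, every pattern in $\P(KF,X)$ is the restriction of an actual point of $X$); surjectivity onto $\P(F,W)$ follows from surjectivity and shift-commutation of $\phi$ exactly as you say; and since you enlarged $K$ to contain $e$, the decomposition $KF_n = F_n \sqcup (KF_n \setminus F_n)$ is legitimate, making the error term $\frac{|KF_n \setminus F_n|}{|F_n|}\log|\A_X|$ vanish by amenability. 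The trade-off relative to the cited treatment: the proof in Kerr--Li applies to factor maps between arbitrary actions of amenable groups on compact metric spaces, whereas your argument is specific to subshifts and sliding block codes; in exchange, yours is elementary, quantitative, and stylistically consistent with the pattern-counting arguments used throughout this paper.
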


Frequently in this paper we refer to ``measuring" or \textit{approximating} the entropy of a subshift via a large set $F$. We give a precise definition as follows. 

\begin{definition}
\label{def:entropy-approx}
    Let $X \subset \A^G$ be a subshift, and let $\delta > 0$. A finite subset $F \subset G$ is said to $\delta$-\textit{approximate} the entropy of $X$ if\[
        h(X) - \delta < h(F,X) < h(X) + \delta.
    \] We shall more commonly write $h(X) < h(F,X) + \delta < h(X) + 2\delta$.
\end{definition}

Infinitely many such sets exist for any $\delta$, as provided by the F\o lner sequence and the definition of $h(X)$. We introduce this notion so that we may layer invariance conditions and entropy-approximating conditions as needed.

\begin{prop}
\label{prop:entropy-and-invariance-conds}
    For finitely many choices of $i$, let $K_i \subset G$ be any finite subsets, and let $\varepsilon_i > 0$ be any positive constants. For finitely many choices of $j$, let $X_j \subset \A_j^G$ be any subshifts over any finite alphabets, and let $\delta_j > 0$ be any positive constants.  There exists a finite subset $F \subset G$ which is $(K_i,\varepsilon_i)$-invariant for every $i$, and which $\delta_j$-approximates the entropy of $X_j$ for every $j$.
\end{prop}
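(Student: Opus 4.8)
The plan is to extract a single set directly from the fixed F\o lner sequence $(F_n)_n$, exploiting the fact that every requirement in the statement is one that holds for all sufficiently large $n$. Since there are only finitely many conditions, it will suffice to locate a threshold index past which all of them hold simultaneously, and then take the corresponding F\o lner set.

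First I would dispense with the invariance conditions one index at a time. Fix an index $i$. Because $K_i$ is a fixed finite subset of $G$ and $(F_n)_n$ is a F\o lner sequence, Definition~\ref{def:folner} gives $\lim_{n\to\infty} |K_i F_n \triangle F_n| / |F_n| = 0$. Hence there is an integer $N_i$ such that $|K_i F_n \triangle F_n|/|F_n| < \varepsilon_i$, i.e.\ $F_n$ is $(K_i,\varepsilon_i)$-invariant, for all $n \geq N_i$. The key point is that the F\o lner condition is asserted for \emph{every} fixed finite subset $K \subset G$, so it applies to each $K_i$ individually and I need not combine the $K_i$ into a single set.

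Next I would handle the entropy-approximation conditions, again one index at a time. Fix an index $j$. By Definition~\ref{def:entropy} we have $h(X_j) = \lim_{n\to\infty} h(F_n, X_j)$, so there is an integer $M_j$ with $|h(F_n, X_j) - h(X_j)| < \delta_j$ for all $n \geq M_j$; by Definition~\ref{def:entropy-approx} this says precisely that $F_n$ $\delta_j$-approximates the entropy of $X_j$ for all such $n$.

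Finally, because the index sets for $i$ and for $j$ are both finite, I would set $N = \max\bigl(\{N_i\}_i \cup \{M_j\}_j\bigr)$ and take $F = F_N$. Then $F$ is $(K_i,\varepsilon_i)$-invariant for every $i$ and $\delta_j$-approximates the entropy of $X_j$ for every $j$, as required. I do not anticipate a genuine obstacle here: this is essentially a ``finitely many eventualities'' argument, and the only subtlety worth flagging explicitly is that finiteness of the collections of constraints is exactly what guarantees the single maximum $N$ exists and works.
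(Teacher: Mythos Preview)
Your proposal is correct and follows exactly the same approach as the paper, which simply says ``Choose $F = F_n$ for sufficiently large $n$.'' You have just spelled out in detail why such an $n$ exists, using the finiteness of the index sets to take a maximum of the threshold indices.
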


\begin{proof}
    Choose $F = F_n$ for sufficiently large $n$.
\end{proof}

The following theorem is an elementary generalization of a classical statement (see \cite{lind_marcus} for a proof in the case where $G = \Z$). We omit the proof here for brevity.

\begin{prop}
\label{prop:entropy-limit}
    Let $(X_n)_{n}$ be a descending family of subshifts, and let $X = \bigcap_n X_n$. Then \[
        h(X) = \lim_{n\to\infty} h(X_n).
    \]
\end{prop}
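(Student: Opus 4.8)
The plan is to prove the two inequalities $h(X) \le \lim_n h(X_n)$ and $\lim_n h(X_n) \le h(X)$ separately, after first checking the limit exists. Since the family is descending, $X_{n+1} \subset X_n$ gives $h(X_{n+1}) \le h(X_n)$ by monotonicity of entropy under inclusion (recorded after Definition~\ref{def:entropy}), so $(h(X_n))_n$ is non-increasing and bounded below by $0$; hence $L := \lim_n h(X_n)$ exists. As $X \subset X_n$ for every $n$, the same monotonicity yields $h(X) \le h(X_n)$, and thus $h(X) \le L$. It remains to prove $L \le h(X)$. First I would dispose of the degenerate case: if $X = \varnothing$, then by the stabilization statement below applied to any nonempty finite $F$ one gets $\P(F,X_n) = \P(F,\varnothing) = \varnothing$ for large $n$, forcing $X_n = \varnothing$ and hence $h(X_n) = 0 = h(X)$ eventually. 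So I may assume all shifts in question are nonempty.

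The first key step is a stabilization statement for pattern sets: for every finite $F \subset G$, the descending sequence $\P(F,X_n) \subset \A^F$ is eventually constant with common value $\P(F,X)$. The inclusion $\P(F,X) \subset \bigcap_n \P(F,X_n)$ is immediate from $X \subset X_n$. For the reverse inclusion I would argue by compactness: given $p \in \bigcap_n \P(F,X_n)$, use Definition~\ref{def:occurrence} together with shift-invariance to choose $y^{(n)} \in X_n$ with $y^{(n)}(F) = p$; by compactness of $\A^G$ pass to a convergent subsequence $y^{(n_k)} \to y$, observe that $y \in X_m$ for every $m$ (each $X_m$ is closed and contains the tail of the subsequence), so $y \in X$, and finally $y(F) = p$ since convergence in the product topology forces agreement on the finite set $F$. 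Thus $p \in \P(F,X)$. This is essentially the argument already used in the proof of Proposition~\ref{elem-sft-result2}.

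The second, crucial ingredient is the Ornstein--Weiss characterization of entropy as an infimum: for any nonempty subshift $Y$ and any nonempty finite $F \subset G$ one has $h(Y) \le h(F,Y)$. This follows by applying the Ornstein--Weiss lemma to the set function $\varphi(F) = \log|\P(F,Y)|$, which is monotone and subadditive (both recorded after Definition~\ref{def:occurrence}) and right-invariant; the lemma gives that $\varphi(F_n)/|F_n| \to h(Y)$ equals $\inf_F \varphi(F)/|F|$ (see \cite{kerr_li}). With this in hand the proof concludes as in the classical $G = \Z$ case: fix $\varepsilon > 0$ and, using $h(X) = \inf_F h(F,X)$, pick a nonempty finite $F$ with $h(F,X) < h(X) + \varepsilon$. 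By the stabilization step, $\P(F,X_n) = \P(F,X)$ for all large $n$, so $h(F,X_n) = h(F,X)$; combined with $h(X_n) \le h(F,X_n)$ this gives $h(X_n) < h(X) + \varepsilon$ for all large $n$, whence $L \le h(X) + \varepsilon$. Letting $\varepsilon \to 0$ yields $L \le h(X)$.

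I expect the main obstacle to be precisely the inequality $h(Y) \le h(F,Y)$. In the $G = \Z$ setting it is the elementary statement, via Fekete's lemma, that $h(Y) = \inf_k \tfrac1k \log|\P(\{0,\dots,k-1\},Y)|$, so every term bounds the entropy from above; but over a general amenable group the passage from ``limit along a F\o lner sequence'' to ``infimum over all finite shapes'' is exactly the content of the Ornstein--Weiss lemma, and it is what legitimizes the limit-swapping in the final step. Everything else reduces to the routine compactness argument of the stabilization step together with monotonicity.
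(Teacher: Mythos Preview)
The paper does not actually give a proof of this proposition; it states the result as an elementary generalization of the classical $G = \Z$ fact and explicitly omits the argument. Your proof is correct and supplies exactly what the paper leaves out.

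You have also correctly isolated the one genuinely nontrivial ingredient in the amenable setting: the inequality $h(Y) \le h(F,Y)$ for every nonempty finite $F$, equivalently the infimum characterization $h(Y) = \inf_F h(F,Y)$. This is precisely the content of the Ornstein--Weiss lemma applied to the monotone, subadditive, right-invariant set function $F \mapsto \log|\P(F,Y)|$ (the required properties are recorded after Definition~\ref{def:occurrence}, and right-invariance is a direct check from the definition of $\P(F,Y)$); the infimum formulation appears for instance in \cite{kerr_li}. Once that is available, your compactness-based stabilization of $\P(F,X_n)$ (which parallels the argument in Proposition~\ref{elem-sft-result2}) together with monotonicity finishes the proof exactly as in the $\Z$ case. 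Your handling of the degenerate $X = \varnothing$ case via the same compactness argument is also clean.
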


It is desirable to work with SFTs as much as possible while preserving (or, in our case, approximating) relevant dynamical quantities. We shall make frequent use of the next theorem, which we justify with several of the above results.

\begin{theorem}
\label{sft-entropy-approx}
    Let $X \subset \A^G$ be a subshift and suppose that $X_0 \subset \A^G$ is an SFT such that $X \subset X_0$. For any $\varepsilon > 0$, there exists an SFT $Z \subset \A^G$ such that $X \subset Z \subset X_0$ and $h(X) \leq h(Z) < h(X) + \varepsilon$.
\end{theorem}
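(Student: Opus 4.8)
The plan is to assemble this directly from the three propositions established just above, since together they exactly bridge an arbitrary subshift and the SFTs that approximate it from outside. The main idea is that $X$ can be exhausted from above by a descending family of SFTs, and each of the three required properties of $Z$ (containing $X$, sitting inside $X_0$, and having entropy within $\varepsilon$ of $h(X)$) will hold for every sufficiently large member of that family.

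First I would invoke Proposition~\ref{elem-sft-result1} to fix a descending family of SFTs $(X_n)_n$ in $\A^G$ with $X = \bigcap_n X_n$. Since each $X_n$ is an SFT satisfying $X \subset X_n$, and since $X_0$ is itself an SFT with $X \subset X_0$, Proposition~\ref{elem-sft-result2} applies to the family $(X_n)_n$ and yields an index $N_1$ such that $X_n \subset X_0$ for all $n \geq N_1$. This secures the upper inclusion $Z \subset X_0$; the lower inclusion $X \subset Z$ is automatic, because $X = \bigcap_n X_n \subset X_n$ for every $n$.

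For the entropy estimate I would use Proposition~\ref{prop:entropy-limit}, which gives $h(X) = \lim_{n\to\infty} h(X_n)$. Hence there is an index $N_2$ with $h(X_n) < h(X) + \varepsilon$ for all $n \geq N_2$. On the other hand, the monotonicity of entropy under inclusion (noted in the discussion following Definition~\ref{def:entropy}) gives $h(X) \leq h(X_n)$ for every $n$, since $X \subset X_n$. Combining these, for $n \geq N_2$ we obtain
\[
    h(X) \leq h(X_n) < h(X) + \varepsilon.
\]
Taking $n = \max(N_1, N_2)$ and setting $Z = X_n$ then produces an SFT with $X \subset Z \subset X_0$ and $h(X) \leq h(Z) < h(X) + \varepsilon$, as required.

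There is no serious obstacle here: the entire argument reduces to choosing a single index large enough to satisfy both the inclusion threshold $N_1$ and the entropy threshold $N_2$ simultaneously, which is handled by taking their maximum. The only point worth stating carefully is that the three cited propositions all apply to \emph{the same} family $(X_n)_n$, so that the inclusion $Z \subset X_0$ and the bound $h(Z) < h(X) + \varepsilon$ are achieved by one and the same SFT rather than by two different approximating families.
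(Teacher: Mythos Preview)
Your proposal is correct and follows essentially the same approach as the paper's proof: both invoke Proposition~\ref{elem-sft-result1} to obtain a descending family of SFTs intersecting to $X$, then use Proposition~\ref{elem-sft-result2} and Proposition~\ref{prop:entropy-limit} to guarantee that all sufficiently large indices yield the required inclusion and entropy bound, and finally take $Z = X_n$ for $n$ large enough. Your version is slightly more explicit in naming the thresholds $N_1$, $N_2$ and citing monotonicity for the lower entropy bound, but the argument is identical in substance.
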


\begin{proof}
    By Proposition~\ref{elem-sft-result1}, there is a descending family of SFTs $(X_n)_n$ such that $X = \bigcap_n X_n$. By Proposition~\ref{elem-sft-result2}, we have $X_n \subset X_0$ for all sufficiently large $n$. By Proposition~\ref{prop:entropy-limit}, we have $h(X) \leq h(X_n) < h(X) + \varepsilon$ for all sufficiently large $n$. Choose $Z = X_n$ for $n$ large enough to meet both conditions.
\end{proof}


If $\phi : X \to W$ is a factor map of subshifts, then we have already seen that $h(W) \leq h(X)$. The ``entropy drop" or \textit{entropy gap} between $X$ and $W$ is the quantity $h(X) - h(W)$. A subsystem $X' \subset X$ induces a corresponding subsystem $\phi(X') = W' \subset W$, and later in this paper we will want a uniform bound for the entropy gap between every $X'$ and $W'$ pair. We make this idea precise in the following definition. 

\begin{definition}
\label{def:max-entropy-gap}
    Suppose $\phi : X \to W$ is a factor map. The \textit{maximal entropy gap} of $\phi$ is the quantity \[
        \mathcal H(\phi) = \sup_{X'} \big(h(X') - h(\phi(X'))\big),
    \] where the supremum is taken over all subshifts $X' \subset X$. In particular, it holds that \[
        h(W) \leq h(X) \leq h(W) + \mathcal H(\phi).
    \]
\end{definition}

Recall that if $X$ and $T$ are subshifts, then the dynamical direct product $X \times T$ factors onto both $X$ and $T$ via the projection map(s) $\pi_X(x,t) = x$ and $\pi_T(x,t) = t$.

\begin{prop}
\label{lem:prod-sys-cond-entropy}
    Let $X$ and $T$ be shift spaces. The maximal entropy gap of the projection map $\pi_X : X \times T \to X$ is \[
        \mathcal H(\pi_X) = h(T).
    \]
\end{prop}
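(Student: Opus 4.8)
The plan is to prove the two inequalities $\mathcal H(\pi_X) \ge h(T)$ and $\mathcal H(\pi_X) \le h(T)$ separately, with the crux being the additivity of entropy over dynamical direct products, namely $h(A \times B) = h(A) + h(B)$ for any shift spaces $A$ and $B$.

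For the additivity, I would first record that for any nonempty finite $F \subset G$ the pattern set of a product factors as a Cartesian product of the pattern sets of the factors. Since $A$ and $B$ are shift-invariant, one has $\P(F,A) = \{a(F) : a \in A\}$ and similarly for $B$, and a pattern of shape $F$ occurring in $A \times B$ is precisely a pair $(a(F), b(F))$ with $a \in A$ and $b \in B$; hence $|\P(F, A \times B)| = |\P(F,A)|\cdot|\P(F,B)|$ and therefore $h(F, A \times B) = h(F,A) + h(F,B)$. Taking $F = F_n$ and passing to the limit along the F\o lner sequence yields $h(A \times B) = h(A) + h(B)$. The degenerate cases where a factor is empty are handled by the convention $h(\varnothing) = 0$.

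The lower bound is then immediate: taking the particular subsystem $X' = X \times T$ gives $\pi_X(X') = X$ and, by additivity, $h(X') = h(X) + h(T)$, so that $h(X') - h(\pi_X(X')) = h(T)$. Since this value is attained by an actual subsystem, $\mathcal H(\pi_X) \ge h(T)$.

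For the upper bound, let $X' \subset X \times T$ be an arbitrary subsystem and set $A = \pi_X(X')$ and $B = \pi_T(X')$. Then $X' \subset A \times B$, so monotonicity of entropy combined with the additivity above gives $h(X') \le h(A \times B) = h(A) + h(B)$. Since $B \subset T$ we have $h(B) \le h(T)$, whence $h(X') - h(\pi_X(X')) = h(X') - h(A) \le h(T)$. Taking the supremum over all $X'$ gives $\mathcal H(\pi_X) \le h(T)$, and combining the two bounds completes the proof. The only genuinely substantive step is the product formula for entropy; everything else is bookkeeping with the containment $X' \subset \pi_X(X') \times \pi_T(X')$ together with monotonicity, so I do not anticipate a serious obstacle here.
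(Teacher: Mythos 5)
Your proof is correct and follows essentially the same route as the paper: the lower bound via the subsystem $X' = X \times T$ together with $h(X \times T) = h(X) + h(T)$, and the upper bound via the containment $X' \subset \pi_X(X') \times \pi_T(X') \subset \pi_X(X') \times T$ plus monotonicity and additivity. The only difference is that you prove the product formula by counting patterns along the F\o lner sequence, whereas the paper simply cites it as classically known; this is a harmless (and correct) elaboration rather than a different approach.
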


\begin{proof}
    It is classically known that $h(X \times T) = h(X) + h(T)$, in which case $h(T) = h(X \times T) - h(X) \leq \mathcal H(\pi_X)$. For the converse inequality, suppose $Z \subset X \times T$ is any subshift. Note by Definition \ref{def:product-of-shifts} that $z \in Z$ implies $z = (z^X, z^T)$, where $z^X = \pi_X(z) \in \pi_X(Z) \subset X$ and $z^T \in T$. Therefore $Z \subset \pi_X(Z) \times T$, in which case it follows that $h(Z) \leq h(\pi_X(Z)) + h(T)$. Since $Z$ was arbitrary, we have \[
        h(T) \leq \mathcal H(\pi_X) = \sup_Z \big( h(Z) - h(\pi_X(Z))\big) \leq h(T),
    \] where the supremum is taken over all subshifts $Z \subset X \times T$.
\end{proof}

A quick corollary is that when $h(T) = 0$, we have $h(Z) = h(\pi_X(Z))$ for any subsystem $Z\subset X\times T$.

\section{Tilings of amenable groups}

\subsection{Definition and encoding}

In this section we consider the notion of \textit{tilings} of $G$. The existence of tilings of $G$ with certain properties is essential in our constructions in subsequent sections. 

\begin{definition}[Quasi-tilings and exact tilings]
\label{def:quasitilings}
    A \textit{quasi-tiling} of $G$ is a pair $(\S, C)$, where $\S$ is a finite collection of finite subsets of $G$ (called the \textit{shapes} of the tiling) and $C$ is a function that assigns each shape $S \in \S$ to a subset $C(S) \subset G$, called the set of \textit{centers} or \textit{center-set} attributed to $S$. We require that $e$ is in $S$ for each $S \in \S$. The following properties are also required. \begin{enumerate}[i.]
        \item For distinct shapes $S$, $S' \in \S$, the subsets $C(S)$ and $C(S')$ are disjoint.
        \item The shapes in $\S$ are ``translate-unique", in the sense that\[
            S \neq S' \implies Sg \neq S', \quad \forall g \in G,
        \] for each $S$, $S' \in \S$. 
        \item The map $(S,c) \mapsto Sc \subset G$ defined on the domain $\{(S,c) : S \in \S \text{ and } c \in C(S)\}$ is injective.
    \end{enumerate}
    
    We may refer to both the pair $(\S,C)$ and the collection  \[
        \T = \T(\S,C) = \{Sc \subset G : S \in \S \text{ and } c \in C(S)\}
    \] as ``\textit{the} quasi-tiling." Each subset $\tau = Sc \in \T$ is called a \textit{tile}. For a quasi-tiling $\T$, we denote the union of all the tiles by $\bigcup \T$. A quasi-tiling $\T$ may not necessarily cover $G$ in the sense that $\bigcup \T = G$; nor is it necessary for any two distinct tiles $\tau$, $\tau' \in \T$ to be disjoint. However, if both of these conditions are met (that is, if $\T$ is a \textit{partition} of $G$), then $\T$ is called an \textit{exact} tiling of $G$.
\end{definition}

Ornstein and Weiss \cite{ornstein_weiss} previously constructed quasi-tilings of $G$ with good dynamical properties, and this construction has become a fundamental tool for analyzing the dynamics of $G$-actions. Downarowicz, Huczek, and Zhang \cite{tilings} sharpened this construction, showing that a countable amenable group exhibits many \textit{exact} tilings with good dynamical properties{, as we describe below} (see Theorem~\ref{downarowicz-thm}). 

A quasi-tiling $\T$ of $G$ may be encoded in symbolic form, allowing for dynamical properties to be attributed to and studied for quasi-tilings. The encoding method presented here differs from the one presented in \cite{tilings}, as we will only require exact tilings in this paper. See Remark~\ref{rem:encoding-equivalence} below for further discussion of the relation between our encoding and the encoding given in \cite{tilings}.

\begin{definition}[Encoding]
\label{def:tiling-encoding}
    Let $\S$ be a finite collection of finite shapes, and let \[
        \Sigma(\S) = \{(S,s) : s \in S \in \S\},
    \] which we view as a finite alphabet. If $\T$ is an exact tiling of $G$ over $\S$, then it corresponds to a unique point $t\in \Sigma^G$ as follows. For each $g \in G$, there is a unique tile $Sc \in \T$ containing $g$; let $s = gc^{-1} \in S$ and set $t_g = (S,s)$.
\end{definition}

{In the above definition, note that} $s$ is the ``relative position" of $g$ in the translate $Sc$ of $S$. In other words, $t$ labels each element $g$ of $G$ with both the type of shape of the tile containing $g$ and the relative position of $g$ within that tile. In particular, $g \in C(S) \iff t_g = (S,e)$.

{Note that the correspondence $\T \mapsto t \in \Sigma^G$, when regarded as a map on the set of all exact tilings of $G$ over $\S$, is injective}. {However, the correspondence is not surjective in general.} Let $\Sigma_E \subset \Sigma^G$ be the {set of all encodings of exact tilings of $G$ over $\S$.} It may be the case that \textit{no} exact tiling of $G$ over $\S$ exists, in which case $\Sigma_E = \varnothing$. In general, we have the following useful theorem.

\begin{prop}
\label{prop:tiling-SFT}
    Let $\S$ be a finite collection of finite shapes drawn from $G$. Then $\Sigma_E(\S) \subset \Sigma(\S)^G$ is an SFT.
\end{prop}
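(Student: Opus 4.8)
The plan is to exhibit a finite set of forbidden patterns $\F$ over the alphabet $\Sigma = \Sigma(\S)$ for which $\Sigma_E = \mathcal R(\Sigma^G, \F)$; since $\F$ will be finite, this realizes $\Sigma_E$ as an SFT by definition. The forbidden patterns will record the failure of a purely local consistency condition. Set $\Delta = \bigcup_{S \in \S} SS^{-1}$, a finite subset of $G$ containing $e$ (as each $S \ni e$). For $t \in \Sigma^G$, I call $t$ \emph{consistent at} $g$ if, writing $t_g = (S,s)$ and setting $c = s^{-1}g$ for the implied center, one has $t_{s'c} = (S, s')$ for every $s' \in S$. Since $s'c = (s's^{-1})g$ with $s's^{-1} \in SS^{-1} \subseteq \Delta$, this condition depends only on the restriction of $t$ to $\Delta g$. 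I would then let $\F \subset \Sigma^\Delta$ be the collection of patterns $p$ inconsistent at $e$, i.e.\ those for which, writing $p_e = (S,s)$, there is some $s' \in S$ with $p_{s's^{-1}} \neq (S,s')$. Unwinding the shift convention $(\sigma^g t)_h = t_{hg}$, one checks that $t$ is consistent at $g$ if and only if $(\sigma^g t)(\Delta) \notin \F$, so that $\mathcal R(\Sigma^G, \F)$ is exactly the set of points consistent at every site. What remains is to show that global consistency characterizes the encodings of exact tilings.

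For the easy inclusion, suppose $t$ encodes an exact tiling $\T$ and fix $g$ with $t_g = (S,s)$. By Definition~\ref{def:tiling-encoding} the unique tile containing $g$ is $Sc$ with $c = s^{-1}g$, and for each $s' \in S$ the point $s'c$ lies in this same tile with relative position $s'$, so $t_{s'c} = (S,s')$. Thus $t$ is consistent at $g$, and every encoding lies in $\mathcal R(\Sigma^G, \F)$.

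The substance of the argument is the converse: a globally consistent $t$ must reconstruct an exact tiling. Given such $t$, for each $g$ write $t_g = (S_g, s_g)$ and define the proposed tile $T_g = S_g c_g$ with center $c_g = s_g^{-1}g$. Since $s_g \in S_g$, we have $g = s_g c_g \in T_g$, so the proposed tiles cover $G$. The key step is disjointness of distinct proposed tiles: if $h \in T_g$, say $h = s'c_g$ with $s' \in S_g$, then consistency at $g$ gives $t_h = (S_g, s')$, whence $S_h = S_g$, $s_h = s'$, and $c_h = s_h^{-1}h = c_g$, so $T_h = T_g$. Hence any two proposed tiles that meet coincide, and $\{T_g : g \in G\}$ is a partition of $G$ into translates of shapes from $\S$. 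Setting $C(S) = \{c : t_c = (S,e)\}$, I would verify that this data is an exact tiling with encoding $t$: property (i) of Definition~\ref{def:quasitilings} is immediate because the single symbol $t_c$ determines $S$; property (iii) follows since within one tile the center is the unique site carrying relative position $e$; and exactness is the partition just established.

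The one remaining point is property (ii), translate-uniqueness of the shapes, which is a condition on $\S$ alone. If $\S$ fails (ii), then no quasi-tiling over $\S$ exists, so $\Sigma_E = \varnothing$ is trivially an SFT; if $\S$ satisfies (ii), then the reconstruction above is unambiguous and the consistent points are precisely $\Sigma_E$, giving $\Sigma_E = \mathcal R(\Sigma^G, \F)$ and finishing the proof. I expect the main obstacle to be the converse direction, and specifically two things there: choosing the window $\Delta$ large enough that local consistency genuinely forces the global partition structure (the ``overlap implies equal tiles'' step), and confirming that the reconstructed combinatorial data satisfies all of the quasi-tiling axioms rather than merely covering $G$.
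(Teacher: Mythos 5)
Your proposal is correct and takes essentially the same route as the paper: your ``consistency'' condition is precisely the paper's local rule (R1), and your reconstruction of an exact tiling from a globally consistent point (covering via $s' = e$, disjointness by forcing equal symbols on overlapping tiles) mirrors the paper's argument step for step. The only differences are refinements rather than a new approach: you make explicit the finite window $\Delta = \bigcup_{S \in \S} SS^{-1}$ that realizes the rule as a finite-type condition (the paper leaves this implicit in ``it is easy to see that $\Sigma_1$ is an SFT''), and you explicitly handle the degenerate case where $\S$ fails translate-uniqueness, so that $\Sigma_E = \varnothing$, a case the paper's proof passes over silently.
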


\begin{proof}
    Let $\Sigma_1$ be the set of all points $t \in \Sigma^G$ that satisfy the following local rule: for each $g \in G$, if $t_g = (S_0, s_0) \in \Sigma$ then \begin{equation}\tag{R1}
    \label{rule}
        t_{sc} = (S_0, s), \quad \forall s \in S_0,
    \end{equation} where $c = s_0^{-1}g$. It is easy to see that $\Sigma_1$ is an SFT, and from Definition \ref{def:tiling-encoding} it is immediate that $\Sigma_E \subset \Sigma_1$. 
    
    For the reverse inclusion, let $t \in \Sigma_1$ be an arbitrary point satisfying the local rule (R1) everywhere. For each $S \in \S$, let $C(S) = \{g\in G : t_g = (S,e)\}$. Then $\T = \T(\S, C)$ is a quasi-tiling. To complete the proof, it suffices to show that $\T$ is exact and encoded by $t$, since that would give $t \in \Sigma_E$ and then $\Sigma_E(\S) = \Sigma_1$. 
    
    Let $g \in G$, suppose $t_g = (S,s)$, and let $c = s^{-1}g$. By rule (R1) and the fact that $e\in S$, we have $t_c = t_{ec} = (S,e)$ and therefore $c \in C(S)$. Hence, $g = sc \in Sc \in \T$. This demonstrates that $\bigcup \T = G$. Next, suppose $Sc$, $S'c' \in \T$ are not disjoint and let $g \in Sc \cap S'c'$. Then $g = sc = s'c'$ for some $s \in S$ and $s' \in S'$. From $c \in C(S)$ we have $t_c = (S,e)$, and by the rule (R1) we have \[
        t_g = t_{sc} = (S,s).
    \] By identical proof we have $t_g = (S', s')$, from which it follows that $S = S'$ and $s = s'$. The latter implies that \[
        c = s^{-1}g = s'^{-1}g = c',
    \] and hence $Sc$ and $S'c'$ are the same tile. This demonstrates that $\T$ is a partition of $G$, and therefore $\T$ is an exact tiling of $G$ over $S$. Finally, we note that it is straightforward to check that $\T$ is encoded by $t$, which completes the proof. 
\end{proof}

\begin{remark}
\label{rem:encoding-equivalence}
    Before we move on, we note here that the encoding method presented above (Definition \ref{def:tiling-encoding}) differs from the one presented in \cite{tilings}. The encoding method in that work gives symbolic encodings for all quasi-tilings, which is not necessary for our present purposes. Indeed, the encoding in \cite{tilings} uses the alphabet $\Lambda = \S \cup \{0\}$, and a point $\lambda \in \Lambda^G$ encodes a quasi-tiling $(\S,C)$ when $\lambda_g = S \iff g \in C(S)$ and $\lambda_g = 0$ otherwise. This is a prudent encoding method for the study of general quasi-tilings, as any quasi-tiling may be encoded in this manner. Our encoding method works only for exact tilings, but is well-suited to our purposes. In fact, if one is only interested in exact tilings, then the two encodings are equivalent. Indeed, if $\Lambda_E \subset \Lambda^G$ is the collection of all encodings of exact tilings of $G$ over $\S$, then there is a topological conjugacy $\phi : \Sigma_E \to \Lambda_E$ given by $\phi(t)_g = S \iff t_g = (S,e)$ and $\phi(t)_g = 0$ otherwise.
\end{remark}
    
    Next we turn our attention to the dynamical properties of tilings, as derived from their encodings.

\begin{definition}[Dynamical tiling system]
    Let $\S$ be a finite collection of finite shapes, let $\T$ be an exact tiling of $G$ over $\S$, and let $\T$ be encoded by the point $t \in \Sigma_E(\S)$. The \textit{dynamical tiling system} generated by $\T$ is the subshift generated by $t$ in $\Sigma^G$, denoted $\Sigma_\T = \overline{\mathcal O}(t) \subset \Sigma_E$.
    
    This allows for the dynamical properties (e.g., entropy) of $\Sigma_\T$ as a subshift of $\Sigma^G$ to be ascribed to $\T$. The \textit{tiling entropy} of $\T$ is $h(\T) = h(\Sigma_\T)$, the entropy of $\Sigma_\T$ as a subshift of $\Sigma^G$.    
\end{definition}

 The tiling entropy of $\T$ is a measure of the ``complexity" of tile patterns that occur in large regions of $G$. In particular, when $\T$ has entropy zero, the number of ways to cover a large region $F \subset G$ by tiles in $\T$ grows subexponentially (with respect to $|F|$).

The following theorem is quickly deduced from the main result of Downarowicz, Huczek, and Zhang \cite{tilings}, which we state in this form for convenience. It is this result that allows us to utilize exact tilings of $G$ in this paper.

\begin{theorem}[\cite{tilings}]
\label{downarowicz-thm}
    Let $K \subset G$ be a finite subset, and let $\varepsilon > 0$. Then there exists a finite collection of finite shapes $\S$ with the following properties. \begin{enumerate}[i.]
        \item Each shape $S \in \S$ is $(K,\varepsilon)$-invariant.
        \item $K \subset S$ and $|S| > \varepsilon^{-1}$ for each shape $S \in \S$.
        \item There exists a point $t_0 \in \Sigma_E(\S)$ such that $h(\overline{\mathcal{O}}(t_0)) = 0$.
    \end{enumerate}
    The point $t_0$ encodes an exact tiling $\T_0$ of $G$ over $\S$ with tiling entropy $h(\T_0) = 0$.
\end{theorem}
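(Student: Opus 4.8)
The plan is to derive the three conditions from the main theorem of \cite{tilings}, which I would invoke in the convenient form that for every finite $K' \subset G$ and every $\varepsilon' > 0$ there is a finite collection of $(K',\varepsilon')$-invariant shapes, all containing $e$, together with an exact tiling of $G$ over those shapes whose dynamical tiling system has zero entropy. The proof is then a matter of (a) choosing the auxiliary parameters $K'$ and $\varepsilon'$ large/small enough to force the quantitative conditions (i) and (ii), and (b) transferring the zero-entropy conclusion from the $\Lambda = \S \cup \{0\}$ encoding used in \cite{tilings} into our encoding $\Sigma_E(\S)$, which is harmless since those two encodings are topologically conjugate by Remark~\ref{rem:encoding-equivalence} and entropy is a conjugacy invariant. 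First I would reduce to the case $e \in K$ by replacing $K$ with $K \cup \{e\}$, which only strengthens both conclusions.

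The one condition that is not part of the raw output of \cite{tilings} is (ii): that result yields invariant shapes containing $e$, but gives no control over whether a shape contains the prescribed set $K$. To arrange $K \subset S$ I would apply the DHZ result with a set $K' \supset K$ so large that $|K'| > 2\varepsilon^{-1}$, and with $\varepsilon' < \min(1, \varepsilon, |K|^{-1})$. For $K \subset K'$ with $e \in K$, the invariance remark following the definition of $(K,\varepsilon)$-invariance shows every $(K',\varepsilon')$-invariant shape is automatically $(K,\varepsilon)$-invariant; moreover, since $|K'S| \geq |K'|$ while $|K'S| \leq (1+\varepsilon')|S|$, each such shape has cardinality $|S| \geq |K'|/(1+\varepsilon') > \varepsilon^{-1}$. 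By Lemma~\ref{small-bdry} the smallness of $\varepsilon'$ gives $|\partial_K S| < \varepsilon'|K||S| < |S|$, so $\int_K S \neq \varnothing$; I then pick $f_S \in \int_K S$ and renormalize each shape, replacing $S$ by $S' = Sf_S^{-1}$ and each center $c \in C(S)$ by $f_S c$. Because $f_S \in S$ and $Kf_S \subset S$, the new shape satisfies $e \in S'$ and $K \subset S'$; being a translate of $S$, it remains $(K,\varepsilon)$-invariant with $|S'| = |S|$, so conditions (i) and (ii) hold for the renormalized collection $\S'$.

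It remains to verify that this renormalization preserves everything dynamical, and this bookkeeping, rather than any deep idea, is the step I expect to require the most care. The renormalization leaves the physical partition $\T_0$ unchanged (each tile $Sc$ equals $S'(f_S c)$), so exactness is immediate, and distinctness and translate-uniqueness of the $S'$ follow from those of the $S$ since each $S'$ is a fixed translate of $S$. At the level of encodings, passing from $\S$ to $\S'$ is exactly the alphabet bijection $(S,s) \mapsto (Sf_S^{-1}, s f_S^{-1})$, which induces a topological conjugacy between the corresponding copies of $\Sigma_E$; composing this with the conjugacy of Remark~\ref{rem:encoding-equivalence} transports the zero-entropy tiling produced by \cite{tilings} to a point $t_0 \in \Sigma_E(\S')$ with $h(\overline{\mathcal O}(t_0)) = 0$, which is condition (iii) and the final assertion $h(\T_0) = 0$. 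The only genuine obstacle is confirming that the DHZ construction really can be invoked in the clean form stated above; granting that, the result is exactly the parameter-chasing and conjugacy bookkeeping sketched here.
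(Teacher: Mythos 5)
The paper offers no proof of Theorem~\ref{downarowicz-thm} to compare against: the statement carries a citation and is described only as ``quickly deduced from the main result of Downarowicz, Huczek, and Zhang,'' so your proposal supplies precisely the deduction that the paper leaves implicit, and it is correct. The form in which you invoke \cite{tilings} is legitimate: their main theorem does yield, for any finite $K' \subset G$ and $\varepsilon' > 0$, an exact tiling of $G$ with finitely many $(K',\varepsilon')$-invariant shapes containing $e$ whose symbolic orbit closure has entropy zero. Granting that, your parameter choices do force (i) and (ii): since $e \in K'$ and each shape $S$ is nonempty, $|K'| \leq |K'S| < (1+\varepsilon')|S|$ gives $|S| > \varepsilon^{-1}$ once $|K'| > 2\varepsilon^{-1}$ and $\varepsilon' < 1$, while $K \subset K'$ with $e \in K$ and $\varepsilon' < \varepsilon$ gives $(K,\varepsilon)$-invariance. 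The containment $K \subset S$ is, as you say, the one condition that cannot come from invariance alone, and your renormalization is the right fix: $\varepsilon' < |K|^{-1}$ makes $\int_K S \neq \varnothing$ by Lemma~\ref{small-bdry}, and replacing $(S, C(S))$ by $(Sf_S^{-1}, f_S C(S))$ changes no tile, since $(Sf_S^{-1})(f_S c) = Sc$, so exactness is untouched while each new shape contains $K$ (and hence $e$). The entropy bookkeeping also goes through: the alphabet bijection $(S,s) \mapsto (Sf_S^{-1}, sf_S^{-1})$ carries the encoding of the tiling over the old shapes exactly to its encoding over the new shapes, so the two orbit closures are topologically conjugate and zero entropy transfers, with Remark~\ref{rem:encoding-equivalence} bridging the $\S \cup \{0\}$ encoding of \cite{tilings} and $\Sigma_E$. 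One small point worth making explicit: for $\Sigma_E$ of the renormalized collection to be well defined, the new shapes must be pairwise distinct and translate-unique; this follows because each is a fixed translate of a shape from a translate-unique collection (and if the collection produced by \cite{tilings} were not already translate-unique, one would first merge translate-equivalent shapes by adjusting centers, which is harmless).
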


\subsection{Approximating sets with tiles}

Entropy and other dynamical properties of $G$-shifts are well measured by sets with strong invariance properties (the F\o lner sequence $F_n$ provides a wealth of such sets). However, we would instead like to utilize an (appropriately selected) exact tiling $\T$ for this purpose. In this section, we build good \textit{tile approximations} of sets: finite collections of tiles $\T^* \subset \T$ attributed to large, suitably invariant subsets $F \subset G$ that are \textit{good} in the sense that the symmetric difference $F \triangle \bigcup \T^*$ is small (as a proportion of $|F|$).

\begin{definition}[Tile approximation]
\label{def:tile-approx}
    Let $F \subset G$ be a finite subset. An exact tiling $\T$ of $G$ induces two finite collections of tiles: the \textit{outer approximation} of $F$ by $\T$, denoted \[
        \T^\times(F) = \{\tau \in \T : \tau \cap F \neq \varnothing \},
    \] and the \textit{inner approximation} of $F$ by $\T$, denoted \[
        \T^\circ(F) = \{\tau \in \T : \tau \subset F\}.
    \] Denote $F^\times(\T) = \bigcup \T^\times(F)$ and $F^\circ(\T) = \bigcup \T^\circ(F)$. Observe that $F^\circ \subset F \subset F^\times$.
\end{definition}

\begin{lemma}
\label{lem:invar-cond-for-tile-approx}
    Let $\S$ be a finite collection of shapes from $G$, and let $U = \bigcup \S$. Let $\varepsilon > 0$, and choose $\delta > 0$ such that $\delta|U||UU^{-1}| < \varepsilon$. Let $F \subset G$ be a finite subset that is $(UU^{-1},\delta)$-invariant. For any exact tiling $\T$ of $G$ over $\S$, the following statements hold: \begin{enumerate}[i.]
        \item $|F^\times(\T) \setminus F^\circ(\T)| < \varepsilon|F|$,
        \item $(1-\varepsilon)|F| < |F^\circ(\T)| \leq |F|$, and
        \item $|F| \leq |F^\times(\T)| < (1+\varepsilon)|F|$.
    \end{enumerate} 
\end{lemma}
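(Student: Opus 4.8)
The plan is to prove statement (i) first, since (ii) and (iii) then follow immediately from the inclusions $F^\circ \subset F \subset F^\times$. Indeed, from (i) together with $F \setminus F^\circ \subset F^\times \setminus F^\circ$ one gets $|F \setminus F^\circ| < \varepsilon|F|$, which combined with $F^\circ \subset F$ yields (ii); symmetrically, $F^\times \setminus F \subset F^\times \setminus F^\circ$ together with $F \subset F^\times$ yields (iii). So the whole lemma reduces to bounding the size of the ``boundary region'' $F^\times(\T) \setminus F^\circ(\T)$, which is precisely the union of those tiles that meet $F$ but are not contained in $F$ (the \emph{boundary tiles}).

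The heart of the argument is to control these boundary tiles using the $UU^{-1}$-boundary $\partial_{UU^{-1}} F$. First I would note that $e \in UU^{-1}$, since $e \in S \subset U$ for every $S \in \S$; hence Lemma~\ref{small-bdry} applies with $K = UU^{-1}$, and because $F$ is $(UU^{-1},\delta)$-invariant it gives $|\partial_{UU^{-1}} F| < \delta\,|UU^{-1}|\,|F|$. The key geometric claim is then that \emph{every boundary tile contains a point of $\partial_{UU^{-1}} F$}. To see this, let $\tau = Sc$ be a tile with $\tau \cap F \neq \varnothing$ and $\tau \not\subset F$, and choose $f \in \tau \cap F$ and $g \in \tau \setminus F$. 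Writing $f = s_2 c$ and $g = s_1 c$ with $s_1, s_2 \in S$, we obtain $g = s_1 s_2^{-1} f \in SS^{-1} f \subset UU^{-1} f$; since $g \notin F$, this witnesses $UU^{-1} f \not\subset F$, so $f \in \partial_{UU^{-1}} F$ by Definition~\ref{def:bdry+int}.

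Because $\T$ is an exact tiling, its tiles are pairwise disjoint, so selecting one such boundary point inside each boundary tile produces distinct points for distinct tiles, i.e. an injection from the set of boundary tiles into $\partial_{UU^{-1}} F$. Thus the number of boundary tiles is at most $|\partial_{UU^{-1}} F|$. Each tile $Sc$ has cardinality $|S| \le |U|$, so summing over boundary tiles gives
\[
    |F^\times(\T) \setminus F^\circ(\T)| \;\le\; |U|\cdot |\partial_{UU^{-1}} F| \;<\; |U|\cdot \delta\,|UU^{-1}|\,|F| \;=\; \delta\,|U|\,|UU^{-1}|\,|F| \;<\; \varepsilon|F|,
\]
where the final step uses the hypothesis $\delta\,|U|\,|UU^{-1}| < \varepsilon$. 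This establishes (i), and with it the lemma.

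I expect the main obstacle to be formalizing the injection step cleanly: the bound crucially combines exactness of the tiling (disjoint tiles, so the chosen boundary points are distinct) with the within-a-tile identity $g \in UU^{-1} f$ that guarantees a boundary point inside every boundary tile. Once that correspondence is set up, the remaining work is routine counting and a single application of Lemma~\ref{small-bdry}.
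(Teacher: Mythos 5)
Your proof is correct and takes essentially the same approach as the paper's: an injection from the boundary tiles into $\partial_{UU^{-1}}F$ (using disjointness from exactness), the bound $|\tau|\le|U|$, one application of Lemma~\ref{small-bdry}, and the deduction of (ii) and (iii) from (i). The only difference is cosmetic: where the paper establishes the key claim by noting $\tau\subset Uc$ and invoking Lemma~\ref{int-complement}, you verify the same fact by the direct computation $g=s_1s_2^{-1}f\in SS^{-1}f\subset UU^{-1}f$, which is equally valid.
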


\begin{proof}
    First, we observe that each tile $\tau \in \T$ is contained in a translate $Ug$ for some $g\in G$; indeed, we have $\tau = Sc$ for some $S \in \S$ and $c \in C(S) \subset G$, then $S \subset U$ implies $\tau \subset Uc$. This fact also gives that $|\tau| \leq |U|$ for every tile $\tau \in \T$.

    We claim that every tile $\tau \in \T^\times(F) \setminus \T^\circ(F)$ intersects $\partial_{UU^{-1}}F$. To establish the claim, we first note that for each such tile $\tau$ it holds that $\tau \cap F \neq \varnothing$ and $\tau \not\subset F$. So let $f \in \tau \cap F$, and note that $f\in \tau \subset Ug$ for some $g\in G$. From $\tau \not\subset F$ we also have $Ug \not \subset F$. By Lemma~\ref{int-complement} we have  $f\in Ug \subset \big(\int_{UU^{-1}} F\big)^c$, and hence $f \in F \setminus\big(\int_{UU^{-1}} F\big) = \partial_{UU^{-1}} F$, which establishes our claim. 
    
    By the claim in the previous paragraph, there is a map $\gamma : \T^\times \setminus \T^\circ \to \partial_{UU^{-1}} F$ with the property that $\gamma(\tau) \in \tau$ for each $\tau$. Observe that $\gamma$ is injective, as distinct tiles are disjoint, and therefore $|\T^\times \setminus \T^\circ| \leq |\partial_{UU^{-1}} F|$. We also have that \[
        |\partial_{UU^{-1}}F| < \delta|UU^{-1}||F|,
    \] by the invariance hypothesis on $F$ and Lemma~\ref{small-bdry}. Then 
    \begin{align*}
        |F^\times \setminus F^\circ| &= \sum_{\tau \in \T^\times \setminus \T^\circ} |\tau|\\
        &\leq |\T^\times \setminus \T^\circ||U|\\
        &\leq |\partial_{UU^{-1}} F||U|\\
        &< \delta|U||UU^{-1}||F|\\
        &< \varepsilon|F|.
    \end{align*}
    This establishes statement (\textit{i.}). The remaining two statements are easy to check using  $F^\times = F^\circ \sqcup (F^\times \setminus F^\circ)$,  $F^\circ \subset F \subset F^\times$, and statement (\textit{i.}).
\end{proof}

One more notion is necessary to develop before moving on from tilings: the \textit{frame} of a given subset with respect to a given tiling.

\begin{definition} [Frame of a tiling]
\label{def:frame}
    Let $F$, $K \subset G$, and let $\T$ be an exact tiling of $G$. The inner $(\T,K)$-\textit{frame} of $F$ is the subset \[
        \operatorname{fr}_{\T,K}(F) = \bigcup_{\tau} \partial_K(\tau),
    \] where the union ranges over all $\tau \in \T^\circ(F)$. See Figure \ref{fig:frame} for an illustration.
\end{definition}
    \begin{figure}[hbt]
        \centering
        \begin{subfigure}[h]{.4\textwidth}
            \centering
            \includegraphics[width=\textwidth]{graphics/region_tiles.png}
            \caption{A hypothetical region $F \subset G$ with illustrated tiling $\T$.}
        \end{subfigure}
        \hfill
        \begin{subfigure}[h]{.4\textwidth}
            \centering
            \includegraphics[width=\textwidth]{graphics/interior_shaded.png}
            \caption{The $\T$-interior of $F$ is shaded.}
        \end{subfigure}
        
        \begin{subfigure}[h]{.6\textwidth}
            \centering
            \includegraphics[width=\textwidth]{graphics/frame_shaded.png}
            \caption{The inner frame of $F$ (with respect to $\T$, $K$) is shaded. The $K$-boundary of each tile inside $F$ is taken.}
        \end{subfigure}
        \caption{A sketch of the construction of $\operatorname{fr}_{\T,K}(F)$}
    \label{fig:frame}
    \end{figure}

\section{Results for SFTs}

Having discussed everything about tilings relevant for our purposes, we are now ready to begin discussing our main results. In this section we present our results for SFTs, and in the following section we turn our attention to sofic shifts.

\begin{theorem}
\label{main-result-sft}
    Let $G$ be a countable amenable group, and let $X$ be a $G$-SFT such that $h(X) > 0$. Then \[
        \{ h(Y) : Y \subset X \text{ and $Y$ is an SFT}\}
    \] is dense in $[0, h(X)]$.
\end{theorem}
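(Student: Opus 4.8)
The plan is to first reduce the theorem to a statement about \emph{arbitrary} (not necessarily finite type) subsystems, and only afterward to recover the finite-type property for free. Fix a target value $\alpha \in [0, h(X)]$ and a tolerance $\varepsilon > 0$; the goal is to produce an SFT $Y \subset X$ with $|h(Y) - \alpha| < \varepsilon$. The endpoint $\alpha = h(X)$ is witnessed by $Y = X$, so assume $\alpha < h(X)$. The reduction is this: it suffices to build \emph{some} subshift $Y' \subset X$ with $h(Y')$ within $\varepsilon/2$ of $\alpha$. Indeed, Theorem~\ref{sft-entropy-approx} (with $X_0 = X$) then supplies an SFT $Y$ with $Y' \subset Y \subset X$ and $h(Y') \le h(Y) < h(Y') + \varepsilon/2$, placing $h(Y)$ within $\varepsilon$ of $\alpha$. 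Thus the whole difficulty is concentrated in realizing prescribed intermediate entropies by some subsystem.

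To build $Y'$, let $K \subset G$ specify $X$ with $e \in K$, and apply Theorem~\ref{downarowicz-thm} to obtain a finite family of shapes $\S$, each large and highly $(KK^{-1},\cdot)$-invariant, together with an exact tiling $\T_0$ of entropy $h(\T_0) = 0$ encoded by a point $t_0$. Invariance of the shapes guarantees, via Lemma~\ref{small-bdry}, that each frame $\partial_{KK^{-1}} S$ occupies a vanishingly small proportion of $S$, while largeness forces $h(S, X)$ to be within a small error of $h(X)$. Fix a reference point $x^* \in X$. For each tile $\tau = Sc$ of $\T_0$ we hold the configuration fixed equal to $x^*$ on the frame $\partial_{KK^{-1}} \tau$, and we allow the interior $\int_{KK^{-1}} \tau$ to range over a chosen collection $\mathcal Q_\tau$ of patterns that (i) complete the fixed frame to an allowed pattern of $X$, and (ii) number approximately $e^{\alpha |S|}$. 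Lemma~\ref{excision_lemma} is exactly what makes this coherent: since every admissible configuration agrees with $x^*$ on all tile frames, any choice of interiors, made independently tile by tile, splices into a genuine point of $X$. This yields a set $Y'_0 \subset X$ in bijection with $\prod_\tau \mathcal Q_\tau$, and we let $Y'$ be the subshift it generates.

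The entropy of $Y'$ should then compute to $\approx \alpha$. For the lower bound, restricting to the tiles contained in a large invariant set $F$ and varying their interiors independently already exhibits on the order of $\prod_{\tau \subset F} |\mathcal Q_\tau| \approx e^{\alpha |F|}$ distinct patterns, using Lemma~\ref{lem:invar-cond-for-tile-approx} to see that these tiles fill all but an $\varepsilon$-fraction of $F$. For the upper bound, any pattern of $Y'$ on $F$ is determined by the local tiling data, the frame values, and the interior choices; the frames occupy a negligible proportion of $F$ and the tiling data carries zero exponential growth since $h(\T_0) = 0$, so the interior choices dominate and the count is at most $e^{(\alpha + o(1))|F|}$. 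This separation of ``scaffolding'' from ``content'' entropy is cleanly formalized by realizing the construction inside the product $X \times \Sigma_{\T_0}$ and invoking Proposition~\ref{lem:prod-sys-cond-entropy} together with $h(\Sigma_{\T_0}) = 0$, which equates the entropy of a product subsystem with that of its projection to $X$. Driving the invariance parameters and shape sizes makes the total error smaller than $\varepsilon/2$. The cases $\alpha = 0$ (take $|\mathcal Q_\tau| = 1$) and $\alpha$ near $h(X)$ are included, the latter because the number of frame-completions available exceeds $e^{(h(X) - o(1))|S|}$.

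The main obstacle is reconciling three competing demands at once: the object must be shift-invariant, its admissible local pieces must glue into genuine points of $X$, and its entropy must be tunable to $\alpha$. Shift-invariance forbids reference to absolute positions, yet gluing across tile boundaries appears to require globally consistent frames; the resolution is to push all boundary information into the single reference $x^*$ and let the excision lemma make compatibility automatic, so that only the interiors carry free information. The heart of the matter is then a counting statement, namely that all but a negligible fraction of the tile frames inherited from $x^*$ admit at least $e^{\alpha|S|}$ completions. This requires choosing $x^*$ to have high complexity (so that its frames are typically ``rich''), and it rests on the shapes being large and invariant enough that the frame entropy is negligible against the bulk $h(X)|S|$. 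It is precisely here that the amenable setting demands the full force of Theorem~\ref{downarowicz-thm}: the existence of \emph{exact}, entropy-zero tilings is what renders the scaffolding invisible to the entropy count, whereas for $G = \Z^d$ one tiles by large cubes and this subtlety never appears.
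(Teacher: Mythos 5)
Your overall scaffolding — realize a target entropy by \emph{some} subsystem, then upgrade to an SFT via Theorem~\ref{sft-entropy-approx}, and keep the bookkeeping clean by working relative to a zero-entropy exact tiling and Proposition~\ref{lem:prod-sys-cond-entropy} — matches the paper's endgame. But your core construction (fix a reference point $x^*$ on all tile frames, let interiors range over tile-indexed collections $\mathcal Q_\tau$ of size $\approx e^{\alpha|S|}$) is not the paper's argument, and it has two genuine gaps. First, the entropy \emph{upper} bound fails as stated because the collections $\mathcal Q_\tau$ are indexed by individual tiles, i.e.\ by absolute position. A pattern of $Y'$ on a window $F$ comes from some translate $\sigma^g y$, and once you have paid for the tiling trace and the frame labels, the interior occupying a given tile position can be \emph{any} element of $\bigcup_{\tau}\mathcal Q_{\tau}$ (union over all tiles $\tau$ of that shape in $\T_0$), not of a single $\mathcal Q_\tau$; that union can have size up to $e^{h(X)|S|}$, and the bound $e^{(\alpha+o(1))|F|}$ evaporates. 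Your own case $\alpha=0$ makes this vivid: with $|\mathcal Q_\tau|=1$ for all $\tau$, the set $Y_0'$ is a single point, and the orbit closure of a single point of $X$ can perfectly well have positive entropy (its windows at different positions can be as diverse as those of a generic point). The repair is to make the interior collection a function of the \emph{shape and border pattern only}, $\mathcal Q_\tau=\mathcal Q(S,\,x^*(\partial\tau))$, so that the frame labels (already counted) determine the admissible interiors; this coherence requirement is exactly what your write-up omits.

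Second, your lower bound rests on the assertion that one can choose $x^*$ so that ``all but a negligible fraction of the tile frames inherited from $x^*$ admit at least $e^{\alpha|S|}$ completions,'' and this is not proved — it is the heart of the matter, as you say, but naming it is not supplying it. Pigeonhole over the at most $|\A|^{\delta|S|}$ borders shows that \emph{some} border of each shape is rich, but you need a \emph{single} point whose borders are simultaneously rich for most tiles, with the density of bad tiles small in every window of the F\o lner sequence; a compactness/diagonal limit of finite-window solutions does not preserve this (the bad tiles of the $n$-th approximant may concentrate precisely in the fixed window you care about). Making this rigorous seems to require either a measure-theoretic argument (measure of maximal entropy, conditional entropy estimates, and a pointwise ergodic theorem for amenable groups) or comparable combinatorial machinery, plus the ``infimum rule'' form of the Ornstein--Weiss lemma (sufficiently invariant shapes $S$ satisfy $h(S,X)\le h(X)+o(1)$), none of which appears in the paper. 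The paper's proof is engineered to avoid certifying that any particular border is rich: it forbids aligned blocks one at a time in $X\times\Sigma_0$, and the excision argument (Lemma~\ref{excision_lemma}) shows each removal changes every border's completion count by at most $1$, hence each step drops entropy by less than $\varepsilon$ and the terminal system has entropy less than $\varepsilon$; this yields an $\varepsilon$-dense family of entropies with no need for a reference point or per-tile counting. So your route is a genuinely different decomposition, but as written it is incomplete at exactly the two places where the paper's successive-forbidding scheme does its work.
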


Before we begin the proof, let us give a short outline of the main ideas. 
The broad strokes of this proof come from Desai \cite{desai}, whose argument in the case where $G = \mathbb Z^d$ we are able to extend to the case where $G$ is an arbitrary countable amenable group. This is possible by utilizing the exact tilings of $G$ constructed by Downarowicz, Huczek and Zhang \cite{tilings}.

Given an arbitrary $\varepsilon > 0$, we produce a family of SFT subshifts of $X$ whose entropies are $2\varepsilon$-dense in $[0,h(X)]$. We accomplish this by first selecting an exact, zero entropy tiling $\T_0$ of $G$ with suitably large, invariant tiles. Then we build subshifts with strongly controlled entropies inside the product system $Z_0 = X \times \Sigma_0$, where $\Sigma_0$ is the dynamical tiling system generated by $\T_0$.


To construct these subshifts from $Z_0$, we control which patterns in the $X$ layer can appear in the ``interior" of the tiles in the $\Sigma_0$ layer. 
We are able to finely comb away entropy from $Z_0$ by forbidding these patterns one at a time. This process generates a descending family of subsystems for which the entropy drop between consecutive subshifts is less than $\varepsilon$. After enough such patterns have been forbidden, the overall entropy is less than $\varepsilon$. This collection of subshifts therefore has entropies that are $\varepsilon$-dense in $[0, h(Z_0)]$. Then we project the subshifts into $X$ and utilize Theorem~\ref{sft-entropy-approx} to produce SFTs subsystems of $X$ with entropies that are $2\varepsilon$-dense in $[0,h(X)]$. 

\begin{proof}
    Let  $X\subset \A^G$ be an SFT such that $h(X) >0$, let $K \subset G$ be a large finite subset such that $\P(K,X)$ specifies $X$ as an SFT, and let $\varepsilon$ be any constant such that $0 < \varepsilon < h(X)$. Choose $\delta > 0$ such that \[
        2\delta + \delta \log 2 + 2 \delta \log |\A| < \varepsilon.
    \] By Theorem~\ref{downarowicz-thm}, there exists a finite collection $\S$ of finite subsets of $G$ with the following properties. \begin{enumerate}[i.]
        \item Each shape $S\in \S$ is $(KK^{-1}, \eta)$-invariant, where $\eta > 0$ is a constant such that $\eta|KK^{-1}| < \delta$. By Lemma~\ref{small-bdry}, this implies that $|\partial_{KK^{-1}} S| < \delta|S|$ for each shape $S \in \S$.
        \item $KK^{-1} \subset S$ and $|S| > \delta^{-1}$ for each $S \in \S$.
        \item There is a point $t_0 \in \Sigma_E(\S)$ such that $h(\overline{\mathcal{O}}(t_0)) = 0$. Consequently, $t_0$ encodes an exact tiling $\T_0$ of $G$ over $\S$ with tiling entropy zero.
    \end{enumerate}
    
    For the remainder of this proof, these are all fixed. We shall abbreviate $\partial F = \partial_{KK^{-1}}F$ for any finite subset $F \subset G$. For a pattern $p$ on $F$, we take $\partial p$ to mean $p(\partial F)$ and call this the \textit{border} of $p$ (with respect to $KK^{-1}$).
    
    Let $\Sigma_0 = \overline{\mathcal O}(t_0) \subset \Sigma_E$ be the dynamical tiling system generated by the tiling $\T_0$, which has entropy zero. Of central importance to this proof is the product system $X \times \Sigma_0$, which factors onto $X$ via the projection map $\pi : X \times \Sigma_0 \to X$ given by $\pi(x,t) = x$ for each $(x,t) \in X \times \Sigma_0$. Let us establish some terminology for certain patterns of interest which occur in this system.
    
    Given a shape $S \in \S$, we shall refer to a pattern $b = (b^X, b^\T) \in \P(S, X \times \Sigma_0)$ as a \textit{block} (to distinguish from patterns of any general shape). If a block $b \in (\A \times \Sigma)^S$ satisfies $b^\T_s = (S,s)$ for every $s \in S$, then we shall say $b$ is \textit{aligned}. See Figure \ref{fig:aligned} for an illustration of the aligned property. 
    
    \begin{figure}[hbt]
    	\centering
    	\includegraphics[width=.8\textwidth]{graphics/aligned_tiles.png}
    	\caption{A hypothetical collection of shapes $\S$, the appropriate alphabet $\Sigma = \Sigma(\S)$, and (the $\T$-layer of) two aligned blocks are pictured. Each point of each block is labelled with the correct shape type and relative displacement within that shape.} 
    \label{fig:aligned}
    \end{figure}

    For a subshift $Z \subset X \times \Sigma_0$, we denote the subcollection of aligned blocks of shape $S$ that occur in $Z$ by \[
        \P^a(S,Z) \subset \P(S,Z) \subset (\A \times \Sigma)^S,
    \] where the superscript $a$ identifies the subcollection. Given a shape $S \in \S$ and an aligned block $b$ of shape $S$, consider the border $\partial b \in (\A \times \Sigma)^{\partial S}$. We are interested in the number of ways that the border $\partial b$ may be extended to all of $S$ - that is, the number of allowed (and in particular, aligned) \textit{interiors} for $S$ which agree with $\partial b$ on the boundary $\partial S$. For a subshift $Z \subset X \times \Sigma_0$, we denote this collection by \[
        \ints^a(\partial b, Z) = \{b' \in \P^a(S, Z) : \partial b' = \partial b\}.
    \] We shall extend all the same terminology described above (blocks, aligned blocks, borders, interiors) to tiles $\tau = Sc \in \T_0$, as there is a bijection between $\P(S,Z)$ and $\P(\tau, Z) = \P(Sc, Z)$. For a given tile $\tau \in \T_0$, a block $b \in (\A \times \Sigma)^\tau$ is aligned if $b^\T_{sc} = (S,s)$ for each $sc \in Sc = \tau$. The subcollection of aligned blocks of shape $\tau$ occurring in a shift $Z \subset X \times \Sigma_0$ is denoted $\P^a(\tau, Z)$. Given a border $\partial b \in (\A \times \Sigma)^{\partial \tau}$, the collection of aligned blocks of shape $\tau$ occurring in $Z$ agreeing with $\partial b$ on $\partial \tau$ is also denoted $\ints^a(\partial b, Z) \subset \P^a(\tau, Z)$.
    
    
    For the theorem, we shall inductively construct a descending family of subshifts $(Z_n)_{n}$ of $X \times \Sigma_0$ as follows. Begin with $Z_0 = X \times \Sigma_0$, then assume $Z_n$ has been constructed for $n \geq 0$. If there exists a shape $S_n \in \S$ and an aligned block $\beta_n \in \P^a(S_n, Z_n)$ such that \[
        |\ints^a(\partial \beta_n, Z_n)| > 1,
    \] then let $Z_{n+1} = Z_n \setminus \beta_n$. If no such block exists on any shape $S\in\S$, then $Z_n$ is the final subshift in the chain and the chain is finite in length.
    
    Let us first argue that in fact, the chain \textit{must} be finite in length. For each $n \geq 0$ we have $Z_{n+1} \subset Z_n$, in which case $\P^a(S,Z_{n+1}) \subset \P^a(S,Z_n)$ for every shape $S \in \S$. Moreover, for the distinguished shape $S_n$ (the shape of the forbidden block $\beta_n$), it holds that $\P^a(S_n, Z_{n+1}) \sqcup \{\beta_n\} \subset \P^a(S_n, Z_n)$. This implies that \[
        \sum_{S \in \S} |\P^a(S, Z_n)|
    \] strictly decreases with $n$. There is no infinite strictly decreasing sequence of positive integers, hence the descending chain must be finite in length. Let $N \geq 0$ be the index of the terminal subshift, and note by construction that the shift $Z_N$ satisfies \[
        |\ints^a(\partial b, Z_N)| = 1
    \] for every aligned block $b \in \P^a(S, Z_N)$ on any shape $S \in \S$.
    
    Most of the rest of the proof aims to establish the following two statements: \begin{equation}\tag{U1}
    \label{claim:small-entropy-gap}
        h(Z_{n+1}) \leq h(Z_n) < h(Z_{n+1}) + \varepsilon \quad \text{for each } n < N, \text{ and}
    \end{equation} \begin{equation}\tag{U2}
    \label{claim:small-terminal-entropy}
        h(Z_N) < \varepsilon.
    \end{equation}
    
    To begin, let $F \subset G$ be a finite subset satisfying the following two conditions: \begin{enumerate}[(F1)]
        \item $F$ is $(UU^{-1},\vartheta)$-invariant, where $U = \bigcup \S$ and $\vartheta$ is a positive constant such that $\vartheta|U||UU^{-1}| < \delta$. Note this implies that $F$ may be well approximated by tiles from any exact tiling of $G$ over $\S$, in the sense of Lemma~\ref{lem:invar-cond-for-tile-approx}.
        \item $F$ is large enough to $\delta$-approximate (Definition \ref{def:entropy-approx}) the entropy of $\Sigma_0$ and $Z_n$ for every $n\leq N$. This implies in particular that $h(F,\Sigma_0) < \delta$.
    \end{enumerate}
    Such a set exists by Proposition~\ref{prop:entropy-and-invariance-conds}. We fix $F$ for the remainder of this proof. 
    
    Now for each $n \leq N$, we claim that \begin{gather*}
        |\P(F,Z_n)| \geq \sum_t \sum_f \prod_\tau |\ints^a(f(\partial\tau),Z_n)|, \quad \text{and} \tag{E$1$} \label{claim:counting-formula-1}\\
        |\P(F,Z_n)| \leq |\A|^{\delta|F|} \cdot \sum_t \sum_f \prod_\tau |\ints^a(f(\partial\tau),Z_n)|, \tag{E$2$} \label{claim:counting-formula-2}
    \end{gather*}
    where the indices $t$, $f$, and $\tau$ are as follows.
    The variable $t$ ranges over $\P(F,\Sigma_0)$, and therefore $t$ is the restriction to $F$ of an encoding of an exact, zero entropy tiling $\T_t$ of $G$ over $\S$. 
    The variable $f$ ranges over all $(\A\times\Sigma)$-labellings of the $(\T_t,\, KK^{-1})$-frame of $F$ (Definition \ref{def:frame}) that are allowed in $Z_0$ and for which $f^\T$ agrees with $t$. Lastly, the variable $\tau$ ranges over the tiles in $\T^\circ_t(F)$.
    
    To begin the argument towards the claims (\ref{claim:counting-formula-1}) and (\ref{claim:counting-formula-2}), let $n \leq N$ be arbitrary. To count patterns $p \in \P(F,Z_n)$, write $p = (p^X, p^\T)$ and sum over all possible labellings in the tiling component. We have \begin{equation}
    \label{eqn:1}
        |\P(F, Z_n)| = \sum_t |\{p \in \P(F,Z_n) : p^\T = t\}|,    
    \end{equation}
    where the sum ranges over all $t \in \P(F,\Sigma_0)$. This is valid because $Z_n \subset Z_0 = X \times \Sigma_0$, hence any $z = (z^X, z^\T) \in Z_n$ must have $z^\T \in \Sigma_0$.
    
    Next, let $t \in \P(F,\Sigma_0)$ be fixed. The pattern $t$ extends to/encodes an exact, zero entropy tiling $\T_t$ of $G$ over $\S$, possibly distinct\footnote{As $\Sigma_0$ is generated by $\T_0$, one may take $\T_t$ to be a translation of $\T_0$ that agrees with $t$ on $F$.} from the original selected tiling $\T_0$. 
    
    Recall that $F^\circ(\T_t) = \bigcup \T_t^\circ(F) \subset F$ is the inner tile approximation of $F$ by the tiling $\T_t$ (Definition \ref{def:tile-approx}). Recall also that the $(\T_t,\, KK^{-1})$-frame of $F$ is the subset $\bigcup_\tau \partial \tau$ where the union is taken over all $\tau \in \T_t^\circ(F)$ (Definition \ref{def:frame}). Since $K$ is fixed for this proof, we shall abbreviate the frame as $\operatorname{fr}_t(F)$. From Equation ($\ref{eqn:1}$), we now split over all allowed labellings of $\operatorname{fr}_t(F)$. We have \begin{equation}
        \label{eqn:2}
        |\P(F,Z_n)| = \sum_t \sum_f |\{ p \in \P(F,Z_n) : p^\T = t \text{ and } p(\operatorname{fr}_t(F)) = f\}|,
    \end{equation}
    where the first sum is taken over all $t \in \P(F, \Sigma_0)$, and the second sum is taken over all $f \in \P(\operatorname{fr}_t(F), Z_0)$ for which $f^\T$ agrees with $t$. 
    
    We have the pattern $t \in \P(F,\Sigma_0)$ fixed from before; next we fix a frame pattern $f \in \P(\operatorname{fr}_t(F), Z_0)$ such that $f^{\T}$ agrees with $t$. We wish to count the number of patterns $p \in \P(F,Z_n)$ such that $p^\T = t$ and $p(\operatorname{fr}_t(F)) = f$. Let this collection be denoted by $D_n = D(t,f;\, Z_n) \subset \P(F,Z_n)$. Observe that each $D_n$ is finite and $D_{n+1} \subset D_n$ for each $n < N$. Consider the map $\gamma : D_0 \to \prod_\tau \P(\tau, Z_0)$ given by $\gamma(p) = (p(\tau))_{\tau}$, which sends a pattern $p \in D_0$ to a vector of blocks indexed by $\T_t^\circ(F)$. We claim the map $\gamma$ is at most $|\A|^{\delta|F|}$-to-1, and for each $n \leq N$ we have \begin{equation}
    \label{claim:gamma-set-equation}
        \gamma(D_n) = \prod_\tau \ints^a(f(\partial\tau), Z_n).
    \end{equation} Together, these claims will provide a bound for $|D_n|$ from above and below, which combine with Equation (\ref{eqn:2}) to yield the claims (\ref{claim:counting-formula-1}) and (\ref{claim:counting-formula-2}).
    
    First we argue that $\gamma$ is at most $|\A|^{\delta|F|}$-to-1. This is where we first invoke the invariance of $F$. Suppose $(b_\tau)_\tau \in \prod_\tau \P(\tau, Z_0)$ is a fixed vector of blocks. If $p \in D_0$ is a pattern such that $\gamma(p) = (b_\tau)_\tau$, then $p^\T$ is determined by $t$ and $p(\tau) = b_\tau$ for each tile $\tau \in \T_t^\circ(F)$. Therefore, $p$ is uniquely determined by $p^X(F\setminus F_t^\circ)$, hence $|\gamma^{-1}(b_\tau)_\tau| \leq |\A|^{|F\setminus F_t^\circ|}$. By property (F1) of the set $F$ and by Lemma~\ref{lem:invar-cond-for-tile-approx} we have $|F\setminus F_t^\circ| < \delta|F|$, and thus the map $\gamma$ is at most $|\A|^{\delta|F|}$-to-1.
    
    Next, we shall prove the set equality (\ref{claim:gamma-set-equation}). Let $n \leq N$, and let $p \in D_n$. For each tile $\tau \in \T_t^\circ(F)$, the block $p(\tau) \in \P(\tau, Z_n) \subset (\A \times \Sigma)^\tau$ is aligned; this is because $p^\T = t$ and $t$ encodes the tiling $\T_t$ itself. Moreover, $p$ agrees with $f$ on $\operatorname{fr}_t(F)$ by assumption that $p \in D_n = D(t,f;Z_n)$, in which case $p(\partial\tau) = f(\partial\tau)$ for each tile $\tau \in \T_t^\circ(F)$. This demonstrates that \[
        \gamma(D_n) \subset \prod_\tau \ints^a(f(\partial\tau), Z_n).
    \] We shall prove the reverse inclusion by induction on $n$. For the $n = 0$ case, let $(b_\tau)_{\tau}$ be a vector of blocks such that $b_\tau \in \P^a(\tau, Z_0)$ and $\partial b_\tau = f(\partial \tau)$ for each $\tau \in \T_t^\circ(F)$. To construct a $\gamma$-preimage of $(b_\tau)_\tau$ in $D_0$, begin with a point $x \in X$ such that $x(\operatorname{fr}_t(F)) = f^X$. Such a point exists because $f$ occurs in some point of $Z_0 = X \times \Sigma_0$. Note that \[
        x(\partial\tau) = f^X(\partial\tau) = \partial b_\tau^X
    \] for each $\tau \in \T_t^\circ(F)$, because $\partial \tau \subset \operatorname{fr}_t(F)$ for each $\tau$. Moreover, for each $\tau$ it holds that the block $b_\tau^X$ occurs in a point of $X$, as each block $b_\tau = (b_\tau^X, b_\tau^\T)$ occurs in a point of $Z_0 = X \times \Sigma_0$. Because $X$ is an SFT specified by patterns of shape $K$, we may repeatedly apply Lemma~\ref{excision_lemma} to excise the block $x(\tau)$ and replace it with $b_\tau^X$ for every $\tau \in \T_t^\circ$. Every tile is disjoint, so the order in which the blocks are replaced does not matter. After at most finitely many steps, we obtain a new point $x' \in X$ such that $x'(\operatorname{fr}_t(F)) = f^X$ and $x'(\tau) = b_\tau^X$ for each $\tau \in \T_t^\circ$.
    
    Recall that the point $t \in \Sigma_0$ is fixed from before. The point $(x', t) \in X \times \Sigma_0$ is therefore allowed in $Z_0 = X \times \Sigma_0$. Let $p = (x',t)(F) \in \P(F,Z_0)$. We have that $p^\T = t$ and $p(\operatorname{fr}_t(F)) = f$ by the selection of $x'$. This implies that $p \in D_0$. It also holds that $p(\tau) = b_\tau$ for each $\tau \in \T_t^\circ(F)$, as $t$ itself encodes the tiling $\T_t$ from which the tiles $\tau \in \T_t^\circ(F)$ are drawn (and each block $b_\tau$ is aligned, by assumption). We then finally have $\gamma(p) = (b_\tau)_\tau$, which settles the case $n = 0$.
    
    Now suppose the set equality (\ref{claim:gamma-set-equation}) holds for some fixed $n < N$, and let $(b_\tau)_\tau \in \prod_\tau \ints^a(f(\partial\tau), Z_{n+1})$. From the inclusion $Z_{n+1} \subset Z_n$ and the inductive hypothesis, it follows there is a pattern $p \in D_n$ such that $\gamma(p) = (b_\tau)_\tau$. Suppose $p = (x,t)(F)$ for some $(x,t) \in Z_n$ (by induction, $t$ is the point fixed from before). We need to modify $p$ only slightly to find a $\gamma$-preimage of $(b_\tau)_\tau$ which occurs in $Z_{n+1}$ (and hence belongs to $D_{n+1}$).
    
    Consider the block $\beta_n$ determined at the beginning of this proof, which is forbidden in the subshift $Z_{n+1}$. If $\beta_n$ occurs anywhere in the point $(x, t)$, then (by the assumption that $\beta_n$ is aligned) it must occur on a tile\footnote{This sentence is the reason why we consider the product system $X \times \Sigma_0$ rather than working in $X$ directly; the extra information on the tiling layer of a labelling $(x,t)$ allows us to control where an aligned block may occur within $(x,t)$.} $\tau \in \T_t$. It does \textit{not} occur on any of the tiles from $\T_t^\circ(F)$, because for each tile $\tau \in \T_t^\circ(F)$ we have $(x,t)(\tau) = b_\tau$ which is allowed in $Z_{n+1}$ by assumption.

    Yet, $\beta_n$ may occur in $(x, t)$ outside of $F_t^\circ$. By the construction of $Z_{n+1}$, we have \[
        |\ints^a(\partial\beta_n, Z_n)| > 1,
    \] and therefore there is an aligned block $\tilde b$ which occurs in $Z_n$ such that $\tilde b \neq \beta_n$ and $\partial \tilde b = \partial \beta_n$. Apply Lemma~\ref{excision_lemma} at most countably many times\footnote{If $x_k$ is the point constructed after $k$ excisions, then $(x_k)_k$ is a Cauchy sequence in $X$ and therefore has a limit $x_\infty \in X$, the desired point with infinitely many excisions applied. The order in which the blocks are replaced does not matter because the tiles are all disjoint.} to excise $\beta_n^X$ wherever it may occur in $x$, replacing it with $\tilde b^X$. This yields a new point $x' \in X$.

    Then $(x', t)\in Z_0$ also belongs to $Z_{n+1}$. It was already the case that none of the blocks $\beta_0, \ldots, \beta_{n-1}$ could occur anywhere in $(x,t)$ by the assumption $(x,t) \in Z_n$, and now neither does $\beta_n$ occur anywhere in $(x',t)$. The pattern $p' = (x', t)(F)$ may be distinct from $p = (x, t)(F)$ (the labelling may change on $F \setminus F_t^\circ$), but we did not replace any of the blocks within $F_t^\circ$. We still have $p'(\tau) = b_\tau$ for each tile $\tau \in \T_t^\circ(F)$, and hence $p' \in D_{n+1}$ and $\gamma(p') = (b_\tau)_\tau$.
    
    This completes the induction, and we conclude that the set equality (\ref{claim:gamma-set-equation}) holds for each $n \leq N$. From this equality and the fact that $\gamma$ is at most $|\A|^{\delta|F}$-to-1, we obtain \begin{equation*}
        \prod_\tau |\ints^a(f(\partial\tau),Z_n)| \leq |D(t,f\, ;Z_n)| \leq |\A|^{\delta|F|} \cdot \prod_\tau |\ints^a(f(\partial\tau),Z_n)|.
    \end{equation*} 
    Notice that the above inequalites hold for each fixed $t \in \P(F,\Sigma_0)$, each fixed $f \in \P(\operatorname{fr}_t(F), X \times \Sigma_0)$ such that $f^\T = t(\operatorname{fr}_t(F))$, and each $n \leq N$. From these inequalities and Equation (\ref{eqn:2}), we conclude that (\ref{claim:counting-formula-1}) and (\ref{claim:counting-formula-2}) hold, i.e., 
    \begin{gather*}
        |\P(F,Z_n)| \geq \sum_t \sum_f \prod_\tau |\ints^a(f(\partial\tau),Z_n)|, \quad \text{and} \tag{E$1$} \\
        |\P(F,Z_n)| \leq |\A|^{\delta|F|} \cdot \sum_t \sum_f \prod_\tau |\ints^a(f(\partial\tau),Z_n)|, \tag{E$2$}
    \end{gather*}    
    where the first sum is taken over all $t \in \P(F,\Sigma_0)$, the second sum over all $f \in \P(\operatorname{fr}_t(F), Z_0)$ for which $f^\T$ agrees with $t$, and the product over all $\tau \in \T_t^\circ(F)$.
    
    Property (F2) of the set $F$ implies that $h(F,\Sigma_0) < \delta$, in which case $|\P(F,Z_0)| < e^{\delta|F|}$. Consequently, the variable $t$ in (\ref{claim:counting-formula-1}) and (\ref{claim:counting-formula-2}) ranges over fewer than $e^{\delta|F|}$ terms. Moreover, by the selection of $\S$, we have $|\partial\tau| < \delta|\tau|$ for each $\tau \in \T_t^\circ(F)$, in which case it follows that \[
        |\operatorname{fr}_t(F)| =  \Big|\bigcup_\tau \partial\tau \Big| = \sum_\tau |\partial\tau| < \sum_\tau \delta|\tau| = \delta \Big|\bigcup_\tau \tau \Big| = \delta |F^\circ_t| \leq \delta|F|.
    \] Here we have used that distinct tiles from $\T_t$ are disjoint. From this estimate, we deduce that there are fewer than $|\A|^{\delta|F|}$ labellings of the frame of $F$ that agree with a fixed $t$ on the $\T$-layer\footnote{It is important to specify that the $\T$-layer is fixed, else the number is at most $|\A\times\Sigma|^{\delta|F|}$, which cannot be bounded because $\delta$ must be selected before $\Sigma$ is constructed.}. Consequently, the variable $f$ in (\ref{claim:counting-formula-1}) and (\ref{claim:counting-formula-2}) ranges over fewer than $|\A|^{\delta|F|}$ terms. Observe also that the size of $\T_t^\circ(F)$ as a collection is small compared to $F$. Indeed, $|S| > \delta^{-1}$ for each shape $S \in \S$, in which case \[
        |F| \geq |F^\circ_t| = \sum_\tau |\tau| \geq |\T_t^\circ(F)| \cdot \big( \min_{S\in \S} |S| \big) > |\T_t^\circ(F)| \delta^{-1},
    \] and therefore $|\T_t^\circ(F)| < \delta|F|$. Consequently, the variable $\tau$ in (\ref{claim:counting-formula-1}) and (\ref{claim:counting-formula-2}) ranges over fewer than $\delta|F|$ terms.
    
    Before returning to (\ref{claim:small-entropy-gap}) and (\ref{claim:small-terminal-entropy}), one more estimate is necessary. For each $n < N$, each shape $S\in \S$, and each aligned block $b \in \P^a(S, Z_n)$, we claim that \begin{equation}
    \label{claim:inequality}
        |\ints^a(\partial b, Z_n)| \leq 2\, |\ints^a(\partial b, Z_{n+1})|.
    \end{equation} Let $S \in \S$, and let $b \in \P^a(S, Z_n)$ be an aligned block occurring in $Z_n$, distinct from the forbidden block $\beta_n$. Say $b = (x,t)(S)$ for some $(x,t) \in Z_n$. We claim that $b$ occurs in a point of $Z_{n+1}$. Note that $t$ extends to/encodes an exact, zero entropy tiling $\T_t$ of $G$ over $\S$, and note that $S$ is a tile of $\T_t$. This is because $b$ is aligned by assumption, in which case $t_e = b^\T_e = (S,e)$, and therefore $e \in C_t(S)$.
    
    Suppose the forbidden block $\beta_n$ occurs anywhere in $(x,t)$. Because $\beta_n$ is aligned, it must occur on a tile $\tau \in \T_t$. It does \textit{not} occur on $S$, because $b \neq \beta_n$. By the assumption that $|\ints^a(\partial\beta_n, Z_n)| > 1$, we know there is an aligned block $\tilde b_n \neq \beta_n$ that occurs in $Z_n$ such that $\partial \tilde b_n = \partial \beta_n$.
    
    Recall $X$ is an SFT specified by patterns of shape $K$, and $\tilde b_n^X$ is allowed in $X$. Again we may apply Lemma~\ref{excision_lemma} at most countably many times, excising $\beta_n^X$ wherever it may occur in $x$ and replacing it with $\tilde b_n^X$. At the end we receive a new point $x' \in X$, within which $\beta_n^X$ does not occur. Then $(x',t)$ is allowed in $Z_{n+1}$ and $(x',t)(S) = b$, hence $b \in \P^a(S,Z_{n+1})$. 
    
    The conclusion is that $\beta_n$ is the only aligned block lost from $Z_n$ to $Z_{n+1}$. For each $b \in \P^a(S,Z_n)$, we have either $\ints^a(\partial b, Z_n) = \ints^a(\partial b, Z_{n+1})$ or $\ints^a(\partial b, Z_n) = \ints^a(\partial b, Z_{n+1})\sqcup \{\beta_n\}$. If two positive integers differ by at most 1 then their ratio is at most 2, hence the inequality (\ref{claim:inequality}) follows.
    
    Finally, we shall use the estimates (\ref{claim:counting-formula-1}) and (\ref{claim:counting-formula-2}) to argue for the ultimate claims (\ref{claim:small-entropy-gap}) and (\ref{claim:small-terminal-entropy}) made before. For the first, consider a fixed $n < N$. It is clear that $h(Z_{n+1}) \leq h(Z_n)$ by inclusion. For the second inequality in (\ref{claim:small-entropy-gap}), we have 
    \begin{align*}
        |\P(F, Z_n)| &\leq |\A|^{\delta|F|} \cdot \sum_t\sum_f \prod_\tau |\ints^a(f(\partial\tau), Z_n)| \\
        &\leq |\A|^{\delta|F|} \cdot \sum_t\sum_f \prod_\tau 2\, |\ints^a(f(\partial\tau), Z_{n+1})|\\
        &< |\A|^{\delta|F|} \cdot 2^{\delta|F|}\cdot\sum_t\sum_f \prod_\tau|\ints^a(f(\partial\tau), Z_{n+1})|\\
        &\leq |\A|^{\delta|F|} \cdot 2^{\delta|F|}\cdot |\P(F, Z_{n+1})|,
    \end{align*}
    where the inequalities are justified by (\ref{claim:counting-formula-2}), (\ref{claim:inequality}), the fact that $|\T_t^\circ(F)| < \delta|F|$, and (\ref{claim:counting-formula-1}), respectively.
    Taking logs and dividing through by $|F|$, we obtain 
    \begin{align*}
        h(Z_n) &< h(F,Z_n) + \delta\\
        &< \big( \delta \log |\A| + \delta \log 2 + h(F,Z_{n+1})\big) + \delta\\
        &< \delta \log |\A| + \delta \log 2 + \big(h(Z_{n+1})+\delta\big) +\delta \\
        &= h(Z_{n+1}) + 2\delta + \delta \log 2 + \delta \log |\A| \\
        &< h(Z_{n+1}) + \varepsilon,
    \end{align*}
    where we have used the property (F2) of $F$, the previous display, the property (F2) again, and our choice of $\delta$.
    This inequality establishes (\ref{claim:small-entropy-gap}). 
    For (\ref{claim:small-terminal-entropy}), recall that the terminal shift $Z_N$ has the property that any aligned border $\partial b \in \P^a(\partial S,Z_N)$ on any shape $S \in \S$ has exactly $1$ allowed aligned interior. Hence, we see that
    \begin{align*}
        |\P(F, Z_N)| &\leq |\A|^{\delta|F|} \cdot \sum_t\sum_{f} \prod_\tau |\operatorname{ints}^a(f(\partial\tau), Z_N)|\\
        &= |\A|^{\delta|F|} \cdot\sum_t\sum_{f} \prod_\tau 1\\
        &< |\A|^{\delta|F|} \cdot e^{\delta|F|} \cdot |\A|^{\delta|F|} \cdot 1,
    \end{align*}
    where the first inequality is justified by (\ref{claim:counting-formula-2}) and the last inequality is justified by our bounds on the number of terms in the sums (established previously).
Taking logs and dividing through by $|F|$, we finally have \begin{align*}
        h(Z_N) &< h(F,Z_N) + \delta\\
        &<  \big(\delta + 2\delta \log |\A|\big) + \delta\\
        &= 2\delta + 2 \delta \log |\A|\\
        &< \varepsilon,
    \end{align*}
    where we have used the property (F2) of the set $F$, the previous display, and our choice of $\delta$.
    We have now established (\ref{claim:small-terminal-entropy}). 
    
    With (\ref{claim:small-entropy-gap}) and (\ref{claim:small-terminal-entropy}) in hand, the rest of the proof is easy. 
    By (\ref{claim:small-entropy-gap}) and (\ref{claim:small-terminal-entropy}), we have that $(Z_n)_{n\leq N}$ is a family of subshifts of $X \times \Sigma_0$ such that $(h(Z_n))_{n\leq N}$ is $\varepsilon$-dense in $[0, h(X\times \Sigma_0)]$.
    
    For each $n \leq N$, let $X_n = \pi(Z_n) \subset X$, where $\pi$ is the projection map $\pi(x,t) = x$. From Lemma~\ref{lem:prod-sys-cond-entropy}, $\mathcal H(\pi) = h(\Sigma_0) = 0$, hence $h(X_n) = h(Z_n)$ for every $n \leq N$.
    
    Then $(X_n)_{n\leq N}$ is a descending family of subshifts of $X$ such that $(h(X_n))_{n\leq N}$ is $\varepsilon$-dense in $[0, h(X)]$. Though each $X_n$ may not be an SFT, we do know that $X$ is an SFT. One may therefore apply Theorem~\ref{sft-entropy-approx} to construct a family of SFTs $(Y_n)_{n\leq N}$ such that for each $n \leq N$, we have $X_n \subset Y_n \subset X$ and $h(X_n) \leq h(Y_n) < h(X_n) + \varepsilon$. Hence $(h(Y_n))_{n\leq N}$ is $2\varepsilon$-dense in $[0, h(X)]$.
    As $\varepsilon$ was arbitrary, we conclude that the entropies of the SFT subsystems of $X$ are dense in $[0,h(X)]$.
\end{proof}

The following ``relative" version of Theorem~\ref{main-result-sft} is stronger and easily obtained as a consequence of Theorem~\ref{main-result-sft}. 

\begin{corollary}
\label{main-result-sft-relative}
    Let $G$ be a countable amenable group, let $X$ be a $G$-SFT, and let $Y \subset X$ be any subsystem such that $h(Y) < h(X)$. Then \[
        \{h(Z) : Y \subset Z \subset X \text{ and $Z$ is an SFT}\}
    \] is dense in $[h(Y), h(X)]$.
\end{corollary}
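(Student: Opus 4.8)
The plan is to deduce this directly from the non-relative Theorem~\ref{main-result-sft}, using the upper-approximation result Theorem~\ref{sft-entropy-approx} as the bridge. Fix a target value $c \in [h(Y), h(X)]$ and a tolerance $\varepsilon > 0$; it suffices to produce a single SFT $Z$ with $Y \subset Z \subset X$ and $|h(Z) - c| < \varepsilon$, since $c$ and $\varepsilon$ are arbitrary.

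First I would apply Theorem~\ref{main-result-sft} to the SFT $X$ (noting $h(X) > 0$ whenever the interval is nondegenerate) to obtain an SFT subsystem $W \subset X$ whose entropy satisfies $|h(W) - c| < \varepsilon/2$. Next, form the union $W \cup Y \subset X$. By the elementary facts on entropy recorded in the entropy subsection, $W \cup Y$ is again a subshift over the common alphabet and $h(W \cup Y) = \max\bigl(h(W), h(Y)\bigr)$. This maximum still lies within $\varepsilon/2$ of $c$: on the one hand $h(W \cup Y) \geq h(W) > c - \varepsilon/2$, and on the other hand, since $h(Y) \leq c$ and $h(W) < c + \varepsilon/2$, we get $h(W \cup Y) = \max\bigl(h(W), h(Y)\bigr) < c + \varepsilon/2$.

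The one subtlety to flag is that $W \cup Y$ need not itself be an SFT—indeed, finite unions of SFTs generally fail to be SFTs—so I cannot simply set $Z = W \cup Y$. This is precisely where Theorem~\ref{sft-entropy-approx} does the work: applying it with ambient SFT $X_0 = X$ and subshift $W \cup Y \subset X$ yields an SFT $Z$ with $W \cup Y \subset Z \subset X$ and $h(W \cup Y) \leq h(Z) < h(W \cup Y) + \varepsilon/2$. Consequently $Y \subset W \cup Y \subset Z \subset X$, the shift $Z$ is an SFT, and the triangle inequality gives $|h(Z) - c| \leq |h(Z) - h(W \cup Y)| + |h(W \cup Y) - c| < \varepsilon/2 + \varepsilon/2 = \varepsilon$.

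Since $c \in [h(Y), h(X)]$ and $\varepsilon > 0$ were arbitrary, this establishes density of the relevant entropies in $[h(Y), h(X)]$. I do not expect any serious obstacle in this argument; the whole content is the observation that the enlargement of $W$ to contain $Y$ should be carried out by the upper approximation of Theorem~\ref{sft-entropy-approx} applied to $W \cup Y$, rather than by any naive union that would break the finite-type property, together with the trivial entropy bookkeeping confirming that passing from $W$ to $W \cup Y$ keeps the entropy inside the target window.
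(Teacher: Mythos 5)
Your proposal is correct and follows essentially the same route as the paper's own proof: both apply Theorem~\ref{main-result-sft} to get an SFT subsystem of $X$ with entropy near the target, take its union with $Y$ (using $h(W\cup Y)=\max(h(W),h(Y))$), and then invoke Theorem~\ref{sft-entropy-approx} to enlarge this union to an SFT $Z$ with $Y\subset Z\subset X$ and negligible entropy increase. The only cosmetic difference is that the paper phrases density via an arbitrary subinterval $(a,b)$ and aims just above its left endpoint, whereas you fix a target $c$ and a symmetric tolerance $\varepsilon$; the bookkeeping is equivalent.
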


\begin{proof}
    We prove the density directly. Suppose $(a,b) \subset [h(Y), h(X)]$ for positive reals $a < b$, and let $\varepsilon < (b - a)/2$. By Theorem~\ref{main-result-sft}, there exists an SFT $Z_0 \subset X$ such that $a < h(Z_0) < a + \varepsilon$. Note that these inequalities give $h(Y) < h(Z_0)$. Consider the subshift $Y\cup Z_0 \subset X$, which has entropy \[
        h(Y \cup Z_0) = \max\big(h(Y), h(Z_0)\big) = h(Z_0) \in (a,a+\varepsilon).
    \] Because $X$ is an SFT and $Y \cup Z_0 \subset X$, by Theorem~\ref{sft-entropy-approx} there is an SFT $Z$ such that $Y \cup Z_0 \subset Z \subset X$ and $h(Y \cup Z_0) \leq h(Z) < h(Y\cup Z_0) + \varepsilon$. Thus we have \[
        a < h(Y\cup Z_0) \leq h(Z) < h(Y\cup Z_0) + \varepsilon < a + 2\varepsilon < b.
    \] Since $(a,b)$ was arbitrary, the proof is complete.
\end{proof}

\section{Sofic shifts}

\subsection{An extension result for sofic shifts}

In order to address the case of sofic shifts, we seek to leverage our results on SFTs. In particular, given a sofic shift $W$, we would like an SFT $X$ such that $W$ is a factor of $X$ and such that the maximal entropy drop across the factor map is very small. The following theorem guarantees the existence of such SFTs.

\begin{theorem}
\label{sft-close-to-sofic}
    Let $W \subset \A_W^G$ be a sofic shift. For every $\varepsilon > 0$, there exists an SFT $\tilde X$ and a one-block code $\tilde\phi : \tilde X \to W$ such that {the maximal entropy gap of $\tilde\phi$ satisfies} $\mathcal H(\tilde\phi) < \varepsilon$.
\end{theorem}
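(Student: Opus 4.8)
The plan is to adapt the combing construction from the proof of Theorem~\ref{main-result-sft}, with two changes: I group aligned blocks by their $W$-image (not merely by their border), and I run the construction inside a genuine SFT so that the output is of finite type. Since $W$ is sofic, Definition~\ref{def:sofic} and Proposition~\ref{sofic-sft-1block-code} give an SFT $X \subset \A_X^G$, specified by a finite $K \subset G$ with $e \in K$, and a one-block code $\phi : X \to W$ coming from a symbol map $\Phi : \A_X \to \A_W$. Fix $\delta > 0$ with $\delta(1 + 2\log|\A_X|) < \varepsilon$, and apply Theorem~\ref{downarowicz-thm} to get a shape collection $\S$ with $|\partial_{KK^{-1}}S| < \delta|S|$ and $|S| > \delta^{-1}$ for all $S \in \S$, together with a point $t_0$ whose orbit closure $\Sigma_0$ has entropy $0$. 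I borrow the vocabulary of \emph{aligned blocks}, \emph{borders}, and \emph{interiors} from the proof of Theorem~\ref{main-result-sft}.

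The key new step addresses the fact that $\Sigma_0$ is in general \emph{not} an SFT. By Proposition~\ref{prop:tiling-SFT}, $\Sigma_E(\S)$ is an SFT containing $\Sigma_0$, so Theorem~\ref{sft-entropy-approx} applied to $\Sigma_0 \subset \Sigma_E(\S)$ produces an SFT $\Sigma_*$ with $\Sigma_0 \subset \Sigma_* \subset \Sigma_E(\S)$ and $h(\Sigma_*) < \delta$. Since $\Sigma_* \subset \Sigma_E(\S)$, every point of $\Sigma_*$ still encodes a genuine exact tiling, so aligned blocks and tile-borders make sense over $\Sigma_*$. Now $X \times \Sigma_*$ is an SFT, and I form $\tilde X$ from it by the following finite pruning: for each shape $S \in \S$, partition the aligned blocks $b \in \P^a(S, X\times\Sigma_*)$ according to the pair $\big(\Phi(b^X), \partial b^X\big)$ of $W$-image and border, retain one representative per class, and forbid every other aligned block. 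Only finitely many patterns are forbidden from an SFT, so $\tilde X$ is an SFT; let $\tilde\phi : \tilde X \to W$ be the one-block code $(x,t) \mapsto \Phi(x)$.

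Two things remain. For surjectivity of $\tilde\phi$, given $w \in W$ choose any $t \in \Sigma_*$ and any $x_0 \in X$ with $\Phi(x_0) = w$; on each (disjoint) tile $\tau$ of $t$ the block $(x_0,t)(\tau)$ is aligned, so I replace its $X$-part by the retained representative of its class, which shares the same border and the same $W$-image $w(\tau)$. Applying Lemma~\ref{excision_lemma} tile by tile (and passing to a limit as in the proof of Theorem~\ref{main-result-sft}) yields $x \in X$ with $\Phi(x) = w$ and $(x,t) \in \tilde X$, so $\tilde\phi$ is onto. For the entropy gap, fix any subsystem $X' \subset \tilde X$ with image $W' = \tilde\phi(X')$ and a sufficiently invariant $F$ that also $\delta$-approximates the entropies of $\Sigma_*$, $X'$, and $W'$. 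A pattern of $X'$ on $F$ is determined by its tiling layer $t \in \P(F,\Sigma_*)$, its $W'$-image in $\P(F,W')$, and the $\A_X$-labellings of the frame $\operatorname{fr}_t(F)$ and of $F \setminus F^\circ(\T_t)$: once these are fixed, the pruning rule forces the block on each interior tile to be the unique representative with the prescribed $W$-image and border. As both the frame and $F \setminus F^\circ(\T_t)$ have size below $\delta|F|$ (exactly as in the proof of Theorem~\ref{main-result-sft}), this gives $|\P(F,X')| \le |\P(F,\Sigma_*)|\,|\P(F,W')|\,|\A_X|^{2\delta|F|}$, hence $h(F,X') \le h(F,\Sigma_*) + h(F,W') + 2\delta\log|\A_X|$. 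Passing to the limit along the F\o lner sequence, $h(X') - h(W') \le h(\Sigma_*) + 2\delta\log|\A_X| < \delta(1 + 2\log|\A_X|) < \varepsilon$, and taking the supremum over $X'$ gives $\mathcal H(\tilde\phi) < \varepsilon$.

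I expect the main obstacle to be precisely the non-finite-type nature of $\Sigma_0$. Over $\mathbb Z^d$ the corresponding tiling layer is a finite orbit of cube-offsets and is automatically an SFT, which is why Desai's argument produces an SFT directly; for a general amenable group $\Sigma_0 = \overline{\mathcal O}(t_0)$ need not be of finite type, so combing inside $X \times \Sigma_0$ would not return an SFT. Interposing the sandwiched low-entropy SFT $\Sigma_*$ fixes this while contributing at most $\delta$ to the gap. The second delicate point is ensuring the pruning does not shrink the image below $W$; this is exactly why the blocks are grouped by $W$-image, so that the excision used to prove surjectivity preserves $\Phi$ on the nose.
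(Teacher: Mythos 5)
Your proposal is correct, and it reaches the theorem by a construction that genuinely differs from the paper's, although both arguments rest on the same two supporting pillars: first, sandwiching the zero-entropy tiling system inside a low-entropy SFT tiling layer (your $\Sigma_*$ is exactly the paper's $T$, obtained in the same way from Proposition~\ref{prop:tiling-SFT} and Theorem~\ref{sft-entropy-approx}), and second, a pattern count that splits a pattern on an invariant set $F$ into its tiling layer, its $W$-image, and $\A_X$-labels on a set of proportion $O(\delta)$. The difference is in how $\tilde X$ is engineered. The paper relabels rather than prunes: it passes to the hybrid alphabet $\A_X \sqcup \A_W$ and the map $\phi_t$ that applies $\Phi$ on the interior of each tile of $\T_t$ while retaining $X$-symbols on tile boundaries, and takes $\tilde X = \{(\phi_t(x),t) : x \in X, \ t \in T\}$. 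With that choice, surjectivity of $\tilde\phi$ is immediate, since $\tilde\phi(\phi_t(x),t) = \phi(x)$, but SFT-ness is not: the paper must write down local rules (R1)--(R2) and prove both inclusions, which is essentially the same work your excision argument performs. Your construction inverts the trade-off: $\tilde X$ is an SFT for free, being finitely many aligned blocks forbidden from the SFT $X \times \Sigma_*$, while surjectivity requires Lemma~\ref{excision_lemma} applied tile by tile with a limit point; your decision to group aligned blocks by the pair ($W$-image, border) rather than by border alone (as in the proof of Theorem~\ref{main-result-sft}) is precisely what makes those replacements preserve $\Phi$, and it is the correct analogue of the paper's relabelling, since in both constructions the block on an interior tile is determined by its border together with its image. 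Your bookkeeping also closes: the leftover information in your count lives on the frame plus $F \setminus F^\circ(\T_t)$, of size less than $2\delta|F|$, versus the paper's bound $\delta(1+\delta)|F|$ coming from the boundaries of all tiles meeting $F$, and either constant suffices.
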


\begin{proof}
    Since $W$ is sofic, there exists an SFT  $X\subset \A_X^G$ and a factor map $\phi : X \to W$. Without loss of generality, we assume that \begin{enumerate}[i.]
        \item $\phi$ is a one-block code, witnessed by the function $\Phi : \A_X \to \A_W$, and
        \item $\A_X$ and $\A_W$ are disjoint. 
    \end{enumerate}
    We abbreviate $\A_{XW} = \A_X \sqcup \A_W$. Let $\varepsilon > 0$, and select $\delta > 0$ such that \[
        4\delta + \delta(1+\delta) \log |\A_X| < \varepsilon/2.
    \] Let $K \subset G$ be a large finite subset that specifies $X$ as an SFT. The set $K$ is fixed for the remainder of this proof, and thus we shall denote $\partial_{KK^{-1}}F$ by $ \partial F$ and $\int_{KK^{-1}} F$ by $\int F$ for any finite set $F \subset G$. By Theorem~\ref{downarowicz-thm}, there exists a finite set of finite shapes $\S$ such that the following conditions are met. \begin{enumerate}[i.]
        \item Each shape $S \in \S$ is $(KK^{-1},\eta)$-invariant, where $\eta > 0$ is a constant such that $\eta|KK^{-1}| < \delta$. By Lemma~\ref{small-bdry}, this implies $|\partial S| < \delta|S|$ for each $S \in \S$.
        \item $KK^{-1} \subset S$ and $|S| > \delta^{-1}$ for each $S \in \S$.
        \item There is a point $t_0 \in \Sigma_E(\S)$ such that $h(\overline{\mathcal O}(t_0)) = 0$.
    \end{enumerate}
    
    Recall by Proposition~\ref{prop:tiling-SFT} that $\Sigma_E$ is an SFT. By Theorem~\ref{sft-entropy-approx}, there is an SFT $T$ such that $\overline{\mathcal O}(t_0) \subset T \subset \Sigma_E$ and $h(T) < h(\overline{\mathcal O}(t_0)) + \delta$. Consequently, each point $t\in T$ is an encoding of an exact tiling $\T_t$ of $G$ over $\S$ (possibly distinct from the original tiling $\T_0$), with tiling system entropy \[
        h(\T_t) = h(\overline{\mathcal O}(t)) \leq h(T) < h(\overline{\mathcal O}(t_0)) + \delta = 0 + \delta.
    \] Because $X$ and $T$ are SFTs, we have that $X \times T \subset (\A_X \times \Sigma)^G$ is also an SFT.
    
    Let $t \in T$ be arbitrary, and recall that $\T_t$ is a partition of $G$. Thus, for each $g\in G$, there is a unique tile $\tau \in \T_t$ such that $g \in \tau$. We define the notation $\T_t(g)$ by setting $\T_t(g) = \tau$.
    Next we define a map $\phi_t : X \to \A_{XW}^G$ by the following rule:  for each $g\in G$ and $x \in X$, \[
        \phi_t(x)_g = \left\{\begin{array}{ll}
            x_g & \text{if } g \in \partial \T_t(g),\\
            \Phi(x_g) & \text{if } g \in \int\T_t(g).
        \end{array}\right.
    \] This map is well-defined, as $\tau = \partial \tau \sqcup \int \tau$. The map $\phi_t$ applies the one-block code $\phi$ to ``most" of a point $x$, by relabelling the \textit{interiors} of each tile $\tau \in \T_t$.
    
    We now define a sliding block code $\varphi : X \times T \to (\A_{XW} \times \Sigma)^G$ by applying the map(s) $\phi_t$ fiber-wise: for each point $(x,t) \in X \times T$, let  \[
        \varphi(x,t) = (\phi_t(x),t).
    \]  It is {straightforward to check} that $\varphi$ is indeed a sliding block code (Definition \ref{def:block-codes}). For the theorem, the desired shift $\tilde X$ is identified with the range of this map. Let \[
        \tilde X = \varphi(X\times T) \subset (\A_{XW}\times \Sigma)^G.
    \] See Figure \ref{fig:sofic-map} for an illustration of the construction. It remains to show that there is a one-block code $\tilde\phi : \tilde X \to W$, that the shift $\tilde X$ is an SFT, and that $\mathcal H(\tilde\phi) < \varepsilon$.
    
    
    \begin{figure}[hbt]
    	\centering
    	\includegraphics[width=.8\textwidth]{graphics/sofic_illustration2.png}
    	\caption{A hypothetical point $x \in X$ with a tiling $t \in T$ overlayed; the partially-transformed point $\phi_t(x)$ is pictured, which is labelled with symbols from both $X$ and $W$; finally, the wholly-transformed image point $\phi(x) \in W$ is reached.}
    \label{fig:sofic-map}
    \end{figure}
    
    
    First, let us show that $\tilde X$ factors onto $W$. The factor map is induced by the function $\tilde\Phi : \A_{XW} \to \A_W$, which is an extension of $\Phi$, defined by the following rule: $\tilde\Phi(\alpha) = \alpha$ if $\alpha\in \A_W$, and $\tilde\Phi(\alpha) = \Phi(\alpha)$ if $\alpha \in \A_X$. Let $\tilde\phi : \tilde X \to \A_W^G$ be given by\[
        \tilde\phi(\tilde x, t)_g = \tilde\Phi(\tilde x_g), \quad\forall g\in G \text{ and } \forall (\tilde x, t) \in \tilde X.
    \] 
    
    It is clear that $\tilde\phi$ is a one-block code. Let us now show that $\tilde \phi( \tilde X) = W$. Let $x \in X$ and $t \in T$, in which case $(\phi_t(x),t) \in \tilde X$ is an arbitrary point. The effect of applying the map $\phi_t$ to $x$ is to apply the one-block code $\phi$ to ``most" of $x$. The map $\tilde\phi$ then ``completes" the relabelling, via the extended function $\tilde\Phi$. In fact, we have that $\tilde\phi(\phi_t(x),t) = \phi(x) \in W$, hence $\tilde \phi (\tilde X) \subset W$. For the reverse inclusion, let $w \in W$. Since $\phi : X \to W$ is onto, there exists a point $x \in X$ such that $\phi(x) = w$. Choose $t \in T$ arbitrarily; then $(\phi_t(x),t) \in \tilde X$ and $\tilde \phi( \phi_t(x), t) = \phi(x) = w$. We conclude that $\tilde \phi : \tilde X \to W$ is a genuine factor map (and a one-block code).
    

    {Let us now show that} $\tilde X$ is an SFT. {We repeat that} the shift $\tilde X$ can be written in the following instructive form: 
    \[
        \tilde X = \big\{
            (\phi_t(x), t) : x \in X \text{ and } t \in T
        \big\} \subset (\A_{XW} \times \Sigma)^G.
    \] {In order to show that $\tilde X$ is an SFT, we will} construct an SFT $\tilde X_1 \subset (\A_{XW} \times \Sigma)^G$ and then prove that $\tilde X = \tilde X_1$. Recall {that} $K \subset G$ specifies $X$ as an SFT. Let $K_T \subset G$ be a finite subset such that $\P(K_T, T)$ specifies $T$. {We define $\tilde X_1$ to be the set of points $(\tilde x,t) \in (\A_{XW}\times\Sigma)^G$ that satisfy the following local rules.}
    \begin{enumerate}[(R1)]
        \item Any pattern of shape $K_T$ that occurs in $t$ must belong to $\P(K_T,T)$, and any pattern of shape $K$ that occurs in $\tilde x$ and belongs to $\A_X^K$
        must also belong to $\P(K,X)$ (recall $\P(K,\tilde X) \subset \A_{XW}^K = (\A_X \sqcup \A_W)^K$ in general). Note by Definition \ref{def:occurrence} that this condition is shift-invariant.
        
        \item For any shape $S \in \mathcal{S}$ and any $c \in G$, if $t$ satisfies $(\sigma^c t)_s = (S,s)$ for each $s \in S$, then $\exists b \in \P(S,X)$ such that $(\sigma^c \tilde x)_s = b_s \in \A_X$ for all $s \in \partial S$ and $(\sigma^c \tilde x)_s = \Phi(b_s) \in \A_W$ for all $s \in \int S$.
    \end{enumerate}
    As these are local rules, they define an SFT; call it $\tilde X_1 \subset (\A_{XW} \times \Sigma)^G$. Moreover, it is easily checked that any point $(\phi_t(x),t) \in \tilde X$ satisfies these rules everywhere (by construction of $\tilde X$), and so we have $\tilde X \subset \tilde X_1$.
    
    For the reverse inclusion, consider a point $(\tilde x,t) \in \tilde X_1$. From (R1) it follows that $t \in T$, as $T$ is an SFT specified by $K_T$. Therefore, $t$ encodes an exact tiling $\T_t$ of $G$ over $\S$ with $h(\T_t) < \delta$. Let $(\tau_n)_n$ enumerate the tiles of $\T_t$, and for each $n$ let $\tau_n = S_nc_n$ for some $S_n \in \S$ and $c_n \in G$. Recall $\{\tau_n : n\in\N\}$ is a partition of $G$.
    
    Let $n \in \N$, and consider $c = c_n$ and $S = S_n$. Observe that, because $t$ encodes the tiling $\T_t$, we have $(\sigma^c t)_s = (S,s)$ for each $s \in S$. Then by (R2), there exists a block $b = b_n \in \P(S,X)$ such that $(\sigma^c \tilde x)_s = b_s$ for all $s \in \partial S$ and $(\sigma^c \tilde x)_s =\Phi(b_s)$ for all $s \in \int S$.
    
    Define a point $x\in \A_X^G$ by setting $x(\tau_n) = b_n$
    for each $n \in \mathbb N$. We claim that $x$ is an allowed point of $X$ and that $\phi_t(x) = \tilde x$. Toward this, let $g \in G$ be arbitrary, and consider the translate $Kg$ (recall that $K$ specifies $X$ as an SFT).
    
    If $Kg$ intersects the \textit{interior} of any tile $\tau_n = S_n c_n$, then $Kg \subset \tau_n$ by Lemma~\ref{int-complement}. In this case, the pattern $(\sigma^g x)(K)$ is a subpattern of $b_n$, and must therefore be allowed in $X$ as $b_n \in \P(S_n,X)$. The alternative is that $Kg$ is disjoint from the interior of every tile, in which case $Kg \subset \bigcup_n \partial\tau_n$. By (R2), we also have $\tilde x_g \in \A_X$ for every $g \in \bigcup_n\int\tau_n$. In this case we have $(\sigma^g x)(K) = (\sigma^g \tilde x)(K)$, which is again allowed in $X$ by (R1).
    
    In either case we have that $(\sigma^g x)(K)$ is allowed in $X$ for any $g \in G$, and hence $x \in X$. Then by the definition of $\phi_t$, we see that  $\phi_t(x) = \tilde x$. 
    Thus, we have found a point $(x, t) \in X\times T$ such that $\varphi(x, t) = (\phi_t(x), t) = (\tilde x, t)$, and hence $(\tilde x,t) \in \tilde X$. We conclude that $\tilde X = \tilde X_1$, and therefore $\tilde X$ is an SFT.
    
    
    {Finally, let us show that} $\mathcal H(\tilde\phi) < \varepsilon$. Towards this end, let $\tilde X' \subset \tilde X$ be any subsystem of $\tilde X$, and let $W' = \tilde\phi(\tilde X') \subset W$. We will show 
    that $h(\tilde X') - h(W') < \varepsilon/2$.
    
    Let $F \subset G$ be a finite subset such that the following conditions are met. \begin{enumerate}[(F1)]
        \item $F$ is $(UU^{-1},\vartheta)$-invariant, where $U = \bigcup \S$ and $\vartheta>0$ is a constant such that $\vartheta |U||UU^{-1}| < \delta$ (recall $\delta$ was selected at the beginning of this proof). Note this implies that $F$ may be well approximated by tiles from any exact tiling of $G$ over $\S$, in the sense of Lemma~\ref{lem:invar-cond-for-tile-approx}.
        \item $F$ is large enough to $\delta$-approximate (Definition \ref{def:entropy-approx}) the entropy of the shifts $X'$, $W'$ and $T$ (recall that $h(T) < \delta$, in which case $h(F,T) < 2\delta$).
    \end{enumerate}
    Such a set exists by Proposition~\ref{prop:entropy-and-invariance-conds}. This set is fixed for the remainder of this proof. Recall that $\tilde\phi$ is a one-block code, and therefore there is a well defined map $\tilde\Phi_F : \P(F,\tilde X') \to \P(F,W')$ which takes a pattern $p \in \P(F,\tilde X')$ and applies the one-block code to $p$ (at each element of $F$).
    
    Recall also that a pattern $p \in \P(F,\tilde X')$ is of the form $p = (\phi_t(x),t)(F)$ for some points $x\in X$ and $t \in T$. The point $t$ encodes an exact tiling $\T_t$ of $G$ over $\S$. For each tile $\tau \in \T_t$, the definition of $\phi_t$ implies that \begin{equation}
        \label{observation}
        \phi_t(x)(\int \tau) \in \A_W^*, \; \text{ and } \; \phi_t(x)(\partial\tau) \in \A_X^*.
    \end{equation}
    
    Let $q = \tilde\Phi_F(p) \in \P(F,W')$. Recall that every element $g \in F$ belongs to a unique tile $\tau = \T_t(g) \in \T_t^\times(F)$, where $\T_t^\times(F) \subset \T_t$ is the outer approximation of $F$ by the tiling $\T_t$ (Definition \ref{def:tile-approx}). By  (\ref{observation}), we have that 
    \[
        q_g = \tilde\Phi_F(p)_g = \tilde\Phi(p^{\tilde X}_g) = \bigg\{ \begin{array}{ll} \Phi(p^{\tilde X}_g) & \text{if } g\in\partial\T_t(g) \\ [0.2em] p^{\tilde X}_g & \text{if } g \in \int \T_t(g)  \end{array}
    \] 
    In particular, we have $q_g = p^{\tilde X}_g$ whenever $g$ belongs to the set \[
        F \cap \Big( \bigcup_\tau \int \tau \Big),
    \] where the union is taken over all $\tau \in \T_t^\times(F)$.
    
    In light of these observations, we are ready to estimate $|P(F,\tilde X')|$ in terms of $|P(F,W')|$. We first use $\tilde \Phi_F$ to split over $\P(F,W')$, and then we split again over all possible $T$-layers. Indeed, we have \begin{equation}
        \label{eqn:sum}
        |\P(F,\tilde X')| = \sum_q \sum_t |\{p \in \tilde{\Phi}_F^{-1}(q) : p^T = t\}|
    \end{equation} where the sums are taken over all patterns $q\in \P(F,W')$ and $t \in \P(F,T)$. Choose and fix patterns $q$ and $t$. If $p \in \P(F,\tilde X')$ is a pattern such that $\tilde \Phi_F(p) = q$ and $p^T = t$, then the observations above imply that $p$ is uniquely determined by \[
        p^{\tilde X}\Big(F \cap \Big( \bigcup_\tau \partial \tau \Big) \Big) \in \A_X^*
    \] where the union is taken over all tiles $\tau \in \T_t^\times(F)$. Moreover, our choice of $\S$ and the property (F1) of $F$ together yield that \[
        \Big| \bigcup_\tau \partial \tau \Big| < \delta \Big| \bigcup_\tau \tau \Big| = \delta|F_t^\times| < \delta(1+\delta)|F|.
    \] Therefore, there are at most $|\A_X|^{\delta(1+\delta)|F|}$ patterns $p$ such that $\tilde \Phi_F(p) = q$ and $p^T = t$. From this and Equation (\ref{eqn:sum}), we have \[
        |\P(F,\tilde X')| \leq |\P(F,W')| \cdot |\P(F,T)| \cdot |\A_X|^{\delta(1+\delta)|F|}.
    \] By taking logs and dividing through by $|F|$, we obtain the following: \begin{align*}
        h(\tilde X') &< h(F,\tilde X') + \delta\\
            &\leq \big(h(F,W') + h(F,T) + \delta(1+\delta) \log |\A_X|\big) + \delta\\
            &< \big(h(W') + \delta\big) + \big(2\delta\big) +  \delta(1+\delta) \log |\A_X| + \delta\\
            &= h(W') + 4\delta + \delta(1+\delta) \log |\A_X|\\
            &< h(W') + \varepsilon/2,
    \end{align*}
    where we have used property (F2) of the set $F$, the above inequality, property (F2) again, and our choice of $\delta$ respectively.
    {Since $\tilde X' \subset \tilde X$ was arbitrary, we have that} \[
        \mathcal H(\tilde\phi) = \sup_{\tilde X'\subset \tilde X} \big( h(\tilde X') - h(\tilde\phi(\tilde X'))\big) \leq \varepsilon/2 < \varepsilon,
    \] which completes the proof.
\end{proof}

\subsection{Subsystem entropies for sofic shifts}

{Here we present our main result concerning subsystem entropies for sofic shifts. The proof follows easily by combining our extension result (Theorem~\ref{sft-close-to-sofic}) with our result for SFTs (Theorem~\ref{main-result-sft-relative}).}

\begin{theorem}
\label{main-result-sofic-relative}
    Let $G$ be a countable amenable group, let $W$ be a sofic $G$-shift and let $V \subset W$ be any subsystem such that $h(V) < h(W)$. Then \[
        \{h(U) : V \subset U \subset W \text { and $U$ is sofic}\}
    \] is dense in $[h(V), h(W)]$.
\end{theorem}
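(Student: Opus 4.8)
The plan is to transfer the SFT density result (Theorem~\ref{main-result-sft-relative}) across the low-gap covering map supplied by Theorem~\ref{sft-close-to-sofic}: I would lift $V$ and $W$ to an SFT extension, run the SFT density theorem there, and then project the resulting SFTs back down to $W$, using the maximal entropy gap to control the entropy loss incurred by the projection.

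Concretely, given $\varepsilon > 0$ (to be pinned down), apply Theorem~\ref{sft-close-to-sofic} to obtain an SFT $\tilde X$ and a one-block code $\tilde\phi : \tilde X \to W$ with $\mathcal H(\tilde\phi) < \varepsilon$. Set $\tilde V = \tilde\phi^{-1}(V)$, which is a subshift of $\tilde X$ (preimage of a closed shift-invariant set under a sliding block code), and note $\tilde\phi(\tilde V) = V$ since $\tilde\phi$ is surjective. Applying the defining inequality of $\mathcal H(\tilde\phi)$ (Definition~\ref{def:max-entropy-gap}) to the subsystems $\tilde V$ and $\tilde X$ gives $h(V) \le h(\tilde V) < h(V) + \varepsilon$ and $h(W) \le h(\tilde X) < h(W) + \varepsilon$. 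In particular, as soon as $\varepsilon < h(W) - h(V)$ we obtain $h(\tilde V) < h(W) \le h(\tilde X)$, so the hypothesis $h(\tilde V) < h(\tilde X)$ of Theorem~\ref{main-result-sft-relative} is met.

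To prove density directly, fix $a < b$ in $[h(V), h(W)]$ and choose $\varepsilon < \min\bigl((b-a)/4,\ h(W)-h(V)\bigr)$ before running the construction above. The midpoint $t^\star = (a+b)/2$ then lies in the open interval $(h(\tilde V), h(\tilde X))$: indeed $h(\tilde V) < h(V)+\varepsilon \le a + \varepsilon < t^\star$, while $t^\star < b \le h(W) \le h(\tilde X)$. By Theorem~\ref{main-result-sft-relative} applied to $\tilde V \subset \tilde X$, there is an SFT $Z$ with $\tilde V \subset Z \subset \tilde X$ and $|h(Z) - t^\star| < (b-a)/4$. Set $U = \tilde\phi(Z)$. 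Then $U$ is a factor of the SFT $Z$, hence sofic; moreover $V = \tilde\phi(\tilde V) \subset \tilde\phi(Z) = U \subset \tilde\phi(\tilde X) = W$, so $V \subset U \subset W$. Finally, the maximal entropy gap bound applied to $Z$ gives $h(Z) - \varepsilon < h(U) \le h(Z)$, and combining this with $|h(Z) - t^\star| < (b-a)/4$ and $\varepsilon < (b-a)/4$ yields $a < h(U) < b$. Since $(a,b)$ was arbitrary, this establishes density in $[h(V), h(W)]$.

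The heavy lifting is done entirely by the two ingredient theorems, so I do not anticipate a genuine obstacle; the only delicate points are bookkeeping. One must choose $\varepsilon$ small relative to both the target width $b-a$ and the slack $h(W) - h(V)$ so that (i) the inequality $h(\tilde V) < h(\tilde X)$ holds, permitting the invocation of the relative SFT theorem, and (ii) the downward entropy drift of at most $\varepsilon$ incurred in passing from $Z$ to $U = \tilde\phi(Z)$ cannot push $h(U)$ out of $(a,b)$. Targeting the midpoint with $\varepsilon < (b-a)/4$ comfortably absorbs this drift.
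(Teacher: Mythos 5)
Your proposal is correct and follows essentially the same route as the paper: lift $V$ to $\tilde V = \tilde\phi^{-1}(V)$ inside the small-entropy-gap SFT extension from Theorem~\ref{sft-close-to-sofic}, verify $h(\tilde V) < h(\tilde X)$, invoke Theorem~\ref{main-result-sft-relative}, and push the resulting SFT $Z$ back down to the sofic shift $U = \tilde\phi(Z)$, using $\mathcal H(\tilde\phi) < \varepsilon$ to control the entropy drop. The only differences are cosmetic bookkeeping (you target the midpoint of $(a,b)$ with $\varepsilon < (b-a)/4$, whereas the paper targets the subinterval $(a+\varepsilon, a+2\varepsilon)$ with $\varepsilon < (b-a)/2$).
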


\begin{proof}
    We prove the density directly. Let $(a,b) \subset [h(V), h(W)]$ for some real numbers $a < b$. Let $\varepsilon < (b - a)/2 < h(W) - h(V)$. By Theorem~\ref{sft-close-to-sofic}, there exists an SFT $X$ and a factor map $\phi : X \to W$ such that $\mathcal H(\phi) < \varepsilon$.
    
    Consider the preimage $Y = \phi^{-1}(V) \subset X$, which is a subshift. Note that $\phi(Y) = V$ because $\phi$ is surjective, hence $\phi|_Y : Y \to V$ is a factor map. We then have that 
    \[
        h(Y) \leq h(V) + \mathcal H(\phi) < h(V) + \varepsilon < h(W) \leq h(X).
    \] Note also that $b \leq h(W) \leq h(X)$ and that $a \geq h(V) > h(Y) - \varepsilon$, which together yield that $(a+\varepsilon,b) \subset [h(Y), h(X)]$. By Theorem~\ref{main-result-sft-relative}, there exists an SFT $Z$ such that $Y \subset Z \subset X$ and $h(Z) \in (a+\varepsilon,a+2\varepsilon) \subset (a,b)$. It follows that $U = \phi(Z)$ is a sofic shift for which $V \subset U \subset W$ and $h(U) \leq h(Z) < h(U) + \varepsilon$. Then we have \begin{gather*}
        a < h(Z) - \varepsilon < h(U) \leq h(Z) < a + 2\varepsilon < b.
    \end{gather*}
        Thus $h(U) \in (a,b)$, which completes the proof. 
\end{proof}

If one selects $V = \varnothing$ for the above theorem, then one recovers the statement that the entropies of the sofic subsystems of $W$ are dense in $[0,h(W)]$. {Next, we present our result concerning the entropies of arbitrary subsystems of sofic shifts.}

\begin{corollary}
\label{sofic-all-entropies-realized}
    Let $W$ be a sofic shift. For every nonnegative real $r \leq h(W)$, there exists a subsystem $R \subset W$ for which $h(R) = r$.
\end{corollary}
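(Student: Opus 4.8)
The plan is to upgrade the density statement of Theorem~\ref{main-result-sofic-relative} into an exact realization by constructing a \emph{nested} sequence of subsystems whose entropies converge to $r$ and then passing to the intersection, where Proposition~\ref{prop:entropy-limit} guarantees that the entropy of the limit equals the limit of the entropies. The essential point is that density alone only produces subsystems with entropy \emph{near} $r$; to pin the value down exactly I would exploit the continuity of entropy along descending chains.

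If $r = h(W)$ I would simply take $R = W$, so assume $0 \le r < h(W)$. I would then build a descending chain of sofic subsystems $W = R_0 \supset R_1 \supset R_2 \supset \cdots$ with $r < h(R_n) < r + 2^{-n}$ for every $n \ge 1$. The construction is inductive: given a sofic subshift $R_{n-1} \subset W$ with $h(R_{n-1}) > r$ (hence $h(R_{n-1}) > 0$, since $r \ge 0$), I apply Theorem~\ref{main-result-sofic-relative} to the sofic shift $R_{n-1}$ with $V = \varnothing$, noting that $h(\varnothing) = 0 < h(R_{n-1})$. This yields that the entropies of sofic subsystems $U \subset R_{n-1}$ are dense in $[0, h(R_{n-1})]$; since $r < h(R_{n-1})$, the nonempty open interval $\bigl(r,\, \min(r+2^{-n}, h(R_{n-1}))\bigr)$ is met by this dense set, so I can choose a sofic $R_n \subset R_{n-1}$ with $h(R_n) \in (r, r + 2^{-n})$. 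This choice keeps the chain nested and maintains the invariant $h(R_n) > r$, so the induction continues; and if at any stage the density step happens to land exactly on $r$, I simply stop and set $R = R_n$.

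Finally I would set $R = \bigcap_n R_n$, a closed shift-invariant subset of $W$ and hence a subsystem. Each $R_n$ is nonempty (as $h(R_n) > r \ge 0$, so in fact $h(R_n) > 0$), so by compactness of $\A_W^G$ and the finite intersection property the descending intersection $R$ is nonempty. By Proposition~\ref{prop:entropy-limit} we have $h(R) = \lim_n h(R_n)$, and the squeeze $r < h(R_n) < r + 2^{-n}$ forces this limit to be exactly $r$.

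The main obstacle---and the reason a single application of the density theorem does not suffice---is that the intersection of an arbitrary, non-nested family of subsystems with entropies near $r$ can collapse to something of far smaller entropy; it is precisely the descending hypothesis of Proposition~\ref{prop:entropy-limit} that is indispensable. Thus the crux is to arrange nestedness while simultaneously controlling entropy, which the \emph{relative} form of Theorem~\ref{main-result-sofic-relative} makes possible by allowing the ambient shift to shrink to $R_{n-1}$ at each stage. The only further points needing care are keeping every $R_n$ sofic (so that the theorem can be reapplied at the next step, which is automatic since the theorem produces sofic subsystems) and verifying that $R$ is nonempty, both of which are handled as above.
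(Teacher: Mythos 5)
Your proof is correct and follows essentially the same route as the paper: both inductively apply Theorem~\ref{main-result-sofic-relative} (with $V = \varnothing$) to build a nested descending sequence of sofic subsystems whose entropies are squeezed into $[r, r + \varepsilon_n)$, and then conclude via Proposition~\ref{prop:entropy-limit} applied to the intersection. Your additional checks (nonemptiness of the intersection via compactness, and verifying $h(\varnothing) = 0 < h(R_{n-1})$ so the theorem applies) are details the paper leaves implicit, but the argument is the same.
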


\begin{proof}
    If $h(W) = 0$, then $r = 0$, in which case one may simply select $R = W$. If $h(W) > 0$, then let $W_0 = W$ and let $(\varepsilon_n)_n$ be a sequence of positive real numbers converging to zero. We have that $W_0$ is sofic and $r \leq h(W_0)$, and without loss of generality we assume that $h(W_0) < r + \varepsilon_0$.
    
    Inductively construct a descending sequence of sofic shifts as follows. If $W_n \subset W$ is a sofic shift such that $r \leq h(W_n) < r + \varepsilon_n$, then by Theorem~\ref{main-result-sofic-relative} there exists a sofic shift $W_{n+1} \subset W_n$ for which $r \leq h(W_{n+1}) < r + \varepsilon_{n+1}$.
    
    Then $R = \bigcap_n W_n \subset W$ is a subshift such that $h(R) = \lim_n h(W_n) = r$ by Proposition~\ref{prop:entropy-limit}.
\end{proof}

\section{A counter-example}

Theorem~\ref{main-result-sofic-relative} implies that the entropies of the \textit{sofic} subsystems of a sofic shift space $W$ are dense in $[0,h(W)]$. One may wonder if this can be somehow ``sharpened"; that is, one may wonder whether 
\[\{h(W') : W'\subset W \text{ and $W'$ is an {SFT}} \}\] is dense in $[0, h(W)]$. However, this statement is nowhere close to true in general, as we illustrate in this section by counterexample. This example is an adaptation of a construction of Boyle, Pavlov, and Schraudner \cite{boyle_pavlov_schraudner}.

\begin{prop}
\label{prop:counterexample}
    There exists a sofic $\mathbb Z^2$-shift with positive entropy whose only SFT subsystem is a singleton.
\end{prop}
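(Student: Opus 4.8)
The plan is to construct a sofic $\mathbb Z^2$-shift $W$ with positive entropy that nonetheless admits essentially no nontrivial finite-type subsystem, by engineering $W$ so that every point other than a fixed symbol is forced to carry a long-range synchronizing structure that cannot be captured by any finite window. The natural approach, following Boyle--Pavlov--Schraudner, is to build $W$ as the image of an SFT factor map in which the positive-entropy behavior is ``paid for'' by information that, while locally unconstrained, globally prevents any proper SFT from embedding nontrivially. Concretely, I would take an alphabet containing a distinguished ``blank'' symbol $*$ together with symbols used to draw sparse, self-delimiting structures (e.g. rows of boxes of varying widths) that encode arbitrary free bits, so that the presence of a single nonblank symbol forces the existence of a surrounding box whose boundary must be globally consistent. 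The free bits inside the boxes give positive entropy, while the requirement that each box be completely and correctly framed is a condition that no finite collection of allowed patterns can enforce.

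The key steps, in order, are as follows. First, I would define a one-dimensional ``marker'' system of boxes: finite rectangles whose boundaries are marked and whose interiors carry free binary data, laid out along rows with blank space between them, and verify that the set of valid global configurations is sofic (it is the image under a one-block code of an SFT that tracks, via a local rule, the progress of drawing and closing each box). Second, I would compute the entropy and confirm $h(W) > 0$, which follows because the interiors of the boxes may be filled with arbitrarily many independent bits per unit area along a positive-density collection of sites. Third, and this is the heart of the matter, I would show that if $W' \subset W$ is an SFT specified by patterns of some finite shape $K$, then $W'$ cannot contain any point with a nonblank symbol. The idea is that containing one nonblank symbol forces (by the global framing rules inherent to $W$) the presence of arbitrarily large completed boxes, but an SFT cannot distinguish a correctly-closed large box from one whose far-away closing edge has been omitted or altered once the box exceeds the diameter of $K$; hence $W'$ would be forced to contain a point violating the global consistency required for membership in $W$, a contradiction. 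Therefore the only SFT subsystem of $W$ consists of points with every coordinate equal to $*$, i.e.\ the singleton $\{*^{\mathbb Z^2}\}$.

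The main obstacle I anticipate is the third step: making precise the claim that a finite-type condition cannot enforce the global closure of arbitrarily large boxes. The delicate point is that I must design the box structure so that, for any finite shape $K$, there is a point of $W$ containing boxes larger than $K$ whose local $K$-patterns are \emph{all} individually legal in $W$, yet which can be surgically altered far from any given site to produce a configuration that is locally indistinguishable (on every $K$-window) from a point of $W$ but is not actually in $W$ because some box fails to close. This requires the boxes to be ``finitely describable locally but globally rigid,'' which is exactly the phenomenon exploited in \cite{boyle_pavlov_schraudner}; I would adapt their counting or pumping argument to the present setting. A secondary technical point is confirming that the constructed $W$ is genuinely sofic (i.e.\ exhibiting the covering SFT and one-block code explicitly), and that the argument cleanly isolates the fixed point $*^{\mathbb Z^2}$ as the unique SFT subsystem rather than some larger zero-entropy sofic piece. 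Once the rigidity lemma is established, the conclusion that any SFT subsystem collapses to $\{*^{\mathbb Z^2}\}$ should follow by a short contradiction argument.
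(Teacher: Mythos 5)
Your step 3 contains a genuine gap, and it is not merely the technical obstacle you anticipate: the contradiction you aim for is impossible to arrange. You want to design $W$ so that a configuration in which ``some box fails to close'' is not in $W$, and then produce such a configuration by surgery on a point of an SFT subsystem $W'$. But a subshift is compact, so if $W$ contains completed boxes of arbitrarily large width (which it must, both for your positive-entropy argument and for your claim that a nonblank symbol forces arbitrarily large completed boxes), then $W$ already contains a point with a box that never closes: translate points $w_n \in W$ containing a completed box of width $n$ so that the opening edge sits at the origin, and pass to a convergent subsequence; the limit lies in $W$ because $W$ is closed, and in it the closing edge has run off to infinity. Consequently ``locally legal everywhere but some box fails to close'' does not imply ``not in $W$,'' and your surgically altered point yields no contradiction. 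The same compactness argument undercuts the forcing claim itself: the limit point just described is nonblank yet need not contain any completed box at all. No redesign of the box structure avoids this so long as the global property being violated is closure of a paired opening/closing structure.

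The paper avoids exactly this trap. Its global property (P3) --- every nonzero point of the one-dimensional shift $X$ contains the word $0^n10^n$ for \emph{every} $n$ --- is compatible with compactness because the degenerate limit points (a lone $1$ in a sea of $0$s) satisfy it trivially: the markers $0^n10^n$ are nested rather than paired like your box edges. The contradiction is then obtained not from a failure of closure but from \emph{periodicity}: given a nonzero point $z$ of an SFT subsystem $Z$ specified by the shape $K = [0,k)^2$, the pigeonhole principle yields a repeated vertical word in a column of $1$s, and periodically tiling $\mathbb Z^2$ by the rectangle between the two repetitions produces a point $z'$ all of whose $K$-patterns occur in $z$; hence $z' \in Z$, yet $z'$ has horizontally periodic rows, and a nonzero periodic row cannot contain $0^n10^n$ once $n$ exceeds the period, so $z' \notin Y$. (The paper also gets soficity for free from the Aubrun--Sablik theorem applied to the effective one-dimensional shift $X$, and gets positive entropy by splitting the symbol $1$ into two symbols $1,1'$ --- both cleaner than exhibiting an explicit SFT cover for box structures carrying free bits.) If you want to salvage your outline, the property your surgery violates must be one that every point of $W$ genuinely satisfies --- incompatibility with periodicity is the natural choice --- rather than a closure condition that compactness forces $W$ to break.
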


\begin{proof}
    We first construct a certain point in $\{0,1\}^{\mathbb Z}$ as the limit of a sequence of finite words, then consider the subshift it generates. Let $\delta  = 0.1$ and let $(T_n)_n$ be the sequence of natural numbers given by \[
        T_n =  2n\cdot2^n\cdot\delta^{-1} + 1
    \] for each $n$. Let $w^1 = 010 \in \{0,1\}^3$, and for each $n$ define the word \begin{equation}
    \label{eqn:recursive}
        w^{n+1} = w^n w^n \cdots w^n {w^n}\, 0^n10^n,
    \end{equation} where the $w^n$ term is repeated exactly $T_n$ times. The limit word $w^\infty \in \{0,1\}^{\mathbb N_0}$ is an infinite one-sided sequence. Define a two-sided sequence $x^* \in \{0,1\}^{\mathbb Z}$ by $x^*_i = w^\infty_{|i|}$ for each $i \in \mathbb Z$. Let $X = \overline{\mathcal O}(x^*) \subset \{0,1\}^{\mathbb Z}$ be the subshift generated by $x^*$. We claim that $X$ exhibits the following three properties.
    
    \begin{enumerate}[(P1)]
        \item $X$ is \textit{effective}, meaning that there exists a finite algorithm which enumerates a set of words $\F \subset \{0,1\}^*$ such that $X = \mathcal R\big(\{0,1\}^{\mathbb Z},\, \F\big)$.
    
        \item There exists a point $x \in X$ such that \[
            \limsup_{n\to\infty} \frac{|\{k \in [-n,n] : x_k = 1\}|}{2n+1} > 0.1.
        \]
    
        \item For each $x \in X$, either $x = 0^\mathbb Z$ or $x$ contains the word $0^n10^n$ for every $n$.
    \end{enumerate}
    
    For (P1), let $N$ be arbitrary. Note that because $X = \overline{\mathcal O}(x^*)$, any word of length $N$ occurring in any point $x \in X$ is also a word occurring in $x^*$. By the recursive definition (\ref{eqn:recursive}) and the fact that the sequence $\{T_n\}_{n=1}^{\infty}$ is recursive, there is an algorithm which, upon input $N$, prints all the words of length $N$ that do \textit{not} appear as subwords of $x^*$. The shift $X$ is therefore effective.

    For (P2), we argue that $x^*$ satisfies the condition. For each $n$, let $L_n$ be the length of the word $w^n$. Note that by the recurrence (\ref{eqn:recursive}), we have \[
        L_{n+1} = T_n L_n + 2n+1 \quad \forall n.
    \] For each $n$, let $f_n$ be the \textit{frequency} of 1s in $w^n$, given by \[
        f_n = \frac{|\{ i : w^n_i = 1  \}|}{L_n}.
    \] Observe that $f_n \leq 1$ for each $n$ and $f_1 = \frac13$. It follows from the recurrence (\ref{eqn:recursive}) that \[
        f_{n+1} = \frac{f_n T_n L_n + 1}{T_nL_n + 2n+1}
    \] for each $n$. This implies that \begin{align*}
        f_n - f_{n+1} &= \frac{f_nT_nL_n + f_n(2n+1) - f_nT_nL_n - 1}{T_nL_n + 2n+1}\\
        &\leq \frac{1(2_n+1)-1}{T_n}
    \end{align*}
    
    in which case $f_n - f_{n+1} \leq \frac{2n}{T_n} < \frac{\delta}{2^n}$ for each $n$. Hence, we have that \begin{align*}
        f_1 - f_n &= (f_1 - f_2) + (f_2 - f_3) + \cdots + (f_{n-1} - f_n)\\
        &< \frac\delta 2 + \frac\delta 4 + \cdots + \frac\delta{2^{n-1}}\\[0.3em]
        &< \delta
    \end{align*} in which case $\frac13 - \delta < f_n$ for each $n$. By the recurrence (\ref{eqn:recursive}), we therefore have that \[
        \limsup_{n\to\infty} \frac{|\{ k \in [-n,n] : x^*_k = 1  \}|}{2n+1} \geq \frac13 - \delta > 0.1.
    \] and the subsequence along $(L_n)_n$ is a witness.
    
    For (P3), let $n$ be given. First, observe that the infinite sequence $w^\infty$ is the concatenation of a sequence of blocks, where each block is either the word $w^n$ or $0^m10^m$ for some $m \geq n$. Moreover, each $w^n$ begins with $0$ and ends with $0^n$. This implies that $1$s in distinct blocks are separated by at least $n+1$ appearances of the symbol $0$. Therefore, if for any $k \leq n$ we have that $10^k1$ appears anywhere in $w^\infty$, then it must appear as a subword of a single block (rather than overlapping two distinct blocks), and that block must be $w^n$. 
    
    Next, let $x \in X$ be arbitrary. If the symbol $1$ appears in $x$ at most one time, then (P3) trivially holds. Otherwise, assume that $10^k1$ appears somewhere in $x$ for some $k \geq 1$. Without loss of generality, suppose $x_0 = x_{k+1} = 1$ and $x_i = 0$ for $i \in [1,k]$. Now consider the subword $\omega = x([-L_n, L_n])$ for any $n$ such that $k < L_n$. Because $X = \overline{\mathcal O}(x^*)$, the word $\omega$ must be a subword of $x^*$. Then, either $\omega$ is a subword of $x^*([-2L_n, 2L_n])$, or $\omega$ is a subword of $w^\infty$ or a mirror reflection of one. In the first case, the definitions of $x^*$ and $w^\infty$ imply that $\omega$ contains the word $w^n$ or its mirror. In the latter two cases, the observation of the previous paragraph implies that $\omega$ must contain $w^n$ or its mirror. In any case, $0^n10^n$ is a subword of $x$. As $n$ can be made arbitrarily large, this proves (P3).
    
    We now use the shift $X$ to construct the $\mathbb Z^2$-shift which is desired for the theorem. For each point $x \in X$, let $x^\mathbb Z \in \{0,1\}^{\mathbb Z^2}$ denote the $\mathbb Z^2$-labelling given by \[
        \big(x^\mathbb Z\big)_{(i,j)} = x_i
    \] for each $(i,j) \in \mathbb Z^2$. That is, $x^{\mathbb Z}$ is a $\mathbb Z^2$-labelling such that the symbols along each column are constant, and each row is equal to $x$ itself. We shall also denote \[
        X^\mathbb Z  = \{x^\mathbb Z : x \in X\} \subset \{0,1\}^{\mathbb Z^2}.
    \] It is a theorem of Aubrun and Sablik \cite{aubrun_sablik} that if $X$ is effective, then $X^{\mathbb Z}$ is sofic.
    
    Next, consider the alphabet $\{0,1,1'\}$, where we have artificially created two independent $1$ symbols. Let $\pi : \{0,1,1'\}^{\mathbb Z^2} \to \{0,1\}^{\mathbb Z^2}$ be the one-block code which collapses 1 and $1'$. Let $Y = \pi^{-1}(X^\mathbb Z) \subset \{0,1,1'\}^{\mathbb Z^2}$. The shift $Y$ is a copy of the shift $X^\mathbb Z$, in which the 1 symbols of every point have been replaced either by $1$ or $1'$ in every possible combination.

    We claim that the shift $Y$ is the desired subshift for the theorem. Specifically, we claim that $Y$ is sofic, that $Y$ has positive entropy, and that the only nonempty SFT subsystem of $Y$ is the singleton $\{0^{\mathbb Z^2}\}$.

    To prove that $Y$ is sofic, we construct an SFT $S'$ and a factor map $\phi' : S' \to Y$ to witness the soficity of $Y$. {Since $X^{\mathbb Z}$ is sofic, there is an SFT $S \subset \A^{\mathbb Z^2}$ and a factor map $\phi : S \to X^{\mathbb Z}$.} Without loss of generality, assume that $\phi$ is a one-block code induced by the function $\Phi : \A \to \{0,1\}$.
    
    Define a new finite alphabet $\A \times \{1,1'\}$ and a one-block code $\phi'$ induced by the function $\Phi' : \A \times \{1,1'\} \to \{0,1,1'\}$ which is given by \[
        \Phi'(a,b) = \begin{cases}
            0 &\mbox{if } \Phi(a) = 0\\
            b &\mbox{if } \Phi(a) = 1
        \end{cases}
    \] for each $(a,b) \in \A \times \{1,1'\}$. 
    
    Let $S' = S \times \{1,1'\}^{\mathbb Z^2}$, which we regard as a subshift of $(\A\times\{1,1'\})^{\mathbb Z^2}$. Note that $S'$ is an SFT, because both $S$ and $\{1,1'\}^{\mathbb Z^2}$ (the full $\mathbb Z^2$-shift on two symbols) are SFTs. A point $s' \in S'$ is of the form $s' = (s,\iota)$, where $s$ is a point of $S$ and $\iota \in \{1,1'\}^{\mathbb Z^2}$ is an arbitrary $2$-coloring of $\mathbb Z^2$. The reader may easily check that $(\pi \circ \phi')(s,\iota) = \phi(s) \in X^\mathbb Z$, from which it follows that $\phi'(S') = \pi^{-1}(X^\mathbb Z) = Y$. {Then $\phi' : S' \to Y$ is a factor map. Since $S'$ is an SFT, we conclude that $Y$ is sofic.}
    
    Next, we will show that $h(Y) > 0$. From property (P2), the point $x^* \in X$ exhibits $1$s in more than $10\%$ of the positions in each of infinitely many symmetric intervals, say of the form $[-\ell_n, \ell_n]$ for an increasing sequence of natural numbers $(\ell_n)_n$. Therefore, the point $(x^*)^{\mathbb Z}$ exhibits $1$s in more than $10\%$ of the positions in each square $F_n = [-\ell_n,\ell_n]^2$. Each $1$ in the pattern $(x^*)^\mathbb Z(F_n)$ may be replaced by $1$ or $1'$ independently to yield an allowed pattern of $Y$, which implies that \[
        |\P(F_n, Y)| \geq 2^{0.1|F_n|} \quad \forall n.
    \] As $(F_n)_n$ is a F\o lner sequence for $\mathbb Z^2$, we then have $h(Y) \geq 0.1 \log 2 > 0$.
    
    It remains to show that the only nonempty SFT subsystem of $Y$ is the singleton $\{0^{\mathbb Z^2}\}$. Suppose to the contrary that $Z \subset Y$ is an SFT subsystem of $Y$ which contains a nonzero point. Since $Z$ is an SFT, we may find a constant $k \in \N$ such that the allowed patterns of $Z$ are specified by the shape $K = [0,k)^2 \subset \mathbb Z^2$. 
    
    Let $z\in Z$ be a point different from $0^{\mathbb Z^2}$ and note $\pi(z) = x^\mathbb Z \in X^\mathbb Z$ for some $x \in X$ with $x \neq 0^{\mathbb Z}$. By property (P1), the string $0^n10^n$ appears in $x$ for every $n$. Let $n > k$ be fixed. Suppose without loss of generality that $0^n10^n$ appears centered at the origin of $x$ (with $x_0 = 1$ and $x_i = 0$ for $0 < |i| \leq n$). Thus we have $z_{(0,0)} = 1$ or $1'$. In fact, by the definition of $Y$, we have $z_{(0,j)} \in \{1,1'\}$ for every $j \in \mathbb Z$.
    
    Consider the $i = 0$ column of the point $z$. Starting with each index $\ell \in \mathbb Z$ and looking up, there is a corresponding vertically oriented word $\omega^\ell \in \{1,1'\}^n$ given by $\omega^\ell_j = z_{(0,\ell+j)}$ for each $j \in [0,n)$. By the pigeonhole principle, there must exist a word $\varsigma \in \{1,1'\}^n$ such that $\varsigma = \omega^\ell$ for infinitely many choices of $\ell$. That is, for infinitely many choices of $\ell$, we have $z_{(0,\ell+j)} = \varsigma_j$ for each $j \in [0,n)$.
    
    Let $\ell_1 < \ell_2$ be two such indices where a repetition occurs, with $\ell_2 - \ell_1 > n$. That is, we have $z_{(0,\ell_1 + j)} = z_{(0,\ell_2 +j)} = \varsigma_j$ for every $j \in [0,n)$. Now consider the rectangle $r = z\big([-n,n] \times [\ell_1,\ell_2)\big)$. Tile $\mathbb Z^2$ with infinitely many translated copies of $r$ to obtain a new point $z' \in \{0,1,1'\}^{\mathbb Z^2}$. Figure \ref{fig:counterexample} illustrates the construction.
    
    \begin{figure}[bht]
        \centering
        \begin{subfigure}[h]{.4\textwidth}
            \centering
            \includegraphics[width=\textwidth]{graphics/counterexampleZ.png}
            \caption{A hypothetical point $z$ is illustrated around $[-n,n] \times [\ell_1, \ell_2)$. The repeated vertical word $\varsigma$ is indicated by the dotted box, and the rectangle $r$ by the solid box.}
        \end{subfigure}
        \hspace{.1\textwidth}
        \begin{subfigure}[h]{.4\textwidth}
            \centering
            \includegraphics[width=\textwidth]{graphics/counterexampleZprime.png}
            \caption{The rectangle $r$ is used to tile $\mathbb Z^2$ and thereby construct $z'$. Every $k\times k$ block which occurs in $z'$ also occurs in $z$.}
        \end{subfigure}
        \caption{An illustration of the construction of the contradictory point $z'$, in a hypothetical case where $n = 3$ and $k = 2$.}
    \label{fig:counterexample}
    \end{figure}
    
    Every pattern of shape $K = [0,k)^2$ which occurs in $z'$ is a pattern which occurs in $z$ (including the pattern of all zeroes), hence they are all allowed in $Z$. Because $Z$ is an SFT specified by $K$, it then follows that $z' \in Z$. Because $Z \subset Y = \pi^{-1}(X^\mathbb Z)$, there must exist a point $x' \in X$ such that $\pi(z') = (x')^{\mathbb Z}$. We obtain a contradiction, as the point $x'$ cannot satisfy the property (P3) of $X$. For instance, the word $0^{3n}10^{3n}$ cannot appear in $x'$ (as each row of $z'$ is periodic in the horizontal direction with period $2n+1 < 3n$). This demonstrates that if $Z$ is an SFT, then it contains no nonzero point. Therefore, the only nonempty SFT subsystem of $Y$ is $\{0^{\mathbb Z^2}\}$. 
\end{proof}


\printbibliography

\end{document}